\documentclass[10pt]{amsart}

\usepackage{tikz}
\usetikzlibrary{decorations.pathmorphing}
\usepackage{subcaption}
\usepackage{float}
\usetikzlibrary{arrows.meta,positioning,calc}
\usepackage{amssymb}
\usepackage{enumitem}
\usepackage{comment}

\newtheorem*{theorem*}{Theorem}
\newtheorem{theorem}{Theorem}[section]
\newtheorem{lemma}[theorem]{Lemma}
\newtheorem{proposition}[theorem]{Proposition}
\newtheorem{corollary}[theorem]{Corollary}

\newtheorem{problem}[theorem]{Problem}

\tikzset{
	side marker/.style={
		double,
		double distance=0.7pt,
		line width=0.35pt
	}
}
\tikzset{
  wavy/.style={
    decorate,
    decoration={snake,amplitude=.75mm,segment length=4mm}
  }
}
\newcommand{\mline}{%
=\!=\!=
}
\newcommand{\mto}{%
=\!=\!\vartriangleright
}
\newcommand{\mfrom}{%
\vartriangleleft\!=\!=
}
\newcommand{\mtom}{%
\vartriangleleft\!=\!=\!\vartriangleright
}
\newcommand{\clique}{{\sc Clique}}
\newcommand{\independence}{{\sc Independence}}
\newcommand{\path}{{\sc Path}}
\newcommand{\betweenness}{{\sc Betweenness}}
\newcommand{\claw}{{\sc Claw}}

\tikzstyle{Y} = [circle,draw=black,minimum width=.05em, inner sep=.04em]
\tikzstyle{W} = [rectangle,draw=black,fill=white,minimum width=.15em, inner sep=.3em]
\tikzstyle{A} = [circle,fill=black,minimum width=.1em, inner sep=.1em]
\tikzset{AA/.style={circle,fill=black,inner sep=.5pt,minimum size=2.5pt}}
\tikzstyle{Ar} = [rectangle,fill,minimum width=.15em, inner sep=.15em]  
\tikzstyle{Wr} = [rectangle,fill=red,minimum width=.15em, inner sep=.15em]      

\title{Hilbert fragments}

\author{Csaba Bir\'o}
\address{Department of Mathematics, University of Louisville, Louisville, KY 40220}

\author{Andr\'e E. K\'ezdy}
\address{Department of Mathematics, University of Louisville, Louisville, KY 40220}

\author{Jen\H{o} Lehel}
\address{Department of Mathematics, University of Louisville, Louisville, KY 40220}
\address{Alfr\'ed R\'enyi Institute of Mathematics, P.O.~Box 127, H-1364 Budapest, Hungary}

\begin{document}
\date{\today}
\begin{abstract}
In earlier work, we proved that the length polyhedron of an interval order has a unique Hilbert basis and that every element of this basis is a binary vector. The supports of these binary vectors form a set system whose intersection graph is a cocomparability graph. This leads naturally to the question of which graphs can arise as intersection graphs of such Hilbert-basis supports. 
We introduce Hilbert fragments, defined as induced subgraphs of these intersection graphs. Thus Hilbert fragments form a hereditary class of cocomparability graphs recording the intersection patterns among the supports of Hilbert-basis elements of interval-order length polyhedra. We characterize Hilbert fragments by an axiomatically defined vertex-labeling, called a track-labeling, and give applications illustrating how this characterization can reveal an underlying one-dimensional structure even when the Hilbert fragments themselves are not interval graphs.
\end{abstract}
\maketitle

\section{Introduction}
\label{sec:intro}
\subsection*{Motivation, history, and outline}

Interval orders and interval graphs arose from different directions, but both express the same fundamental idea: encoding one-dimensional structure by a family of intervals. The origins of interval orders can be traced to Wiener \cite{wiener1914}, while 
semiorders were introduced by Luce~\cite{Luce1956} in his study of preferences with a threshold of discrimination and subsequently characterized by Scott and Suppes \cite{ScottSuppes} through unit-interval representations. On the graph-theoretic side, Hajós \cite{Hajos1957} posed the recognition problem for intersection graphs of intervals in 1957, and Benzer \cite{Benzer1959} independently encountered the same structure in 1959 in studying whether observed overlaps among genetic fragments were compatible with a linear organization. Fishburn's work subsequently established interval orders as a basic class of partially ordered sets, characterized by the exclusion of  $\mathbf{2+2}$ as an induced subposet. Thus interval orders and interval graphs became two closely related combinatorial models of one-dimensional structure. We refer to Fishburn's monograph~\cite{FishburnBook} and Trotter's survey~\cite{TrotterSurvey} for background on interval orders, and to Golumbic~\cite{Golumbic2004} for the graph-theoretic setting, including interval graphs and related classes of perfect graphs.

A classical refinement of interval representation is the \emph{interval-count problem}, originally suggested by Graham (see \cite{FMOS} for historical background), which asks for the minimum number of distinct interval lengths needed to represent a given interval order. The problem was studied systematically by Fishburn~\cite{FishburnBook}; related restrictions on the number and range of interval lengths were investigated by Fishburn and Graham~\cite{FishburnGraham1985}. The one-length case gives the classical semiorders, while already the two-length case exhibits substantially richer behavior. Progress in restricted settings includes our characterization of two-count interval orders of height three, arising from our work on interval representations of permutations~\cite{BiKeLe}.

A complementary viewpoint is to study all feasible length assignments at once. Fishburn developed systems of length inequalities, Isaak~\cite{Isaak2009} applied directed-graph methods to constrained interval representations, and Doignon and Pauwels~\cite{DoignonPauwels} investigated the polyhedral structure of the space of interval representations. This line of thought leads naturally from the interval-count problem to the geometry and arithmetic of the associated length polyhedron.

In our previous paper~\cite{LengthPolyhedron}, we studied this length polyhedron, a translated pointed rational polyhedral cone, and determined the unique minimal Hilbert basis of its integral points. Remarkably, all elements of this basis are binary vectors. Their supports can therefore be viewed as distinguished subsets of the intervals in the canonical representation. Moreover, these generators have a direct geometric meaning: relative to the canonical length vector, a generator represents an elementary admissible increment, increasing by one the lengths of precisely the intervals in its support.

This brings a graph into the picture. The \emph{Hilbert graph} of an interval order is the intersection graph---equivalently, the line graph---of the hypergraph formed by these supports. Thus two vertices of the Hilbert graph are adjacent precisely when the corresponding elementary increments affect a common interval. This graph already plays an algorithmic role in computing the Hilbert basis: deciding whether a candidate set decomposes into smaller Hilbert sets can be reduced to a weighted independent-set problem in an induced subgraph of the Hilbert graph~\cite{LengthPolyhedron}.

The present paper reverses this viewpoint. Rather than starting with an interval order and constructing its Hilbert graph, we ask:
\begin{center}
\emph{Which graphs can occur as intersection patterns of Hilbert sets of an interval order?}
\end{center}
This question deliberately forgets most of the original interval representation and retains only the pairwise intersections among selected Hilbert sets. In applications one may likewise encounter only part of the underlying one-dimensional structure. This motivates our central object: a \emph{Hilbert fragment}, an induced subgraph of a Hilbert graph. The problem is then to characterize these graphs intrinsically, without reference to the interval order from which they arise.

Our characterization reveals that surprisingly strong traces of the underlying one-dimensional structure survive this loss of information. We prove that Hilbert fragments form a new hereditary subclass of cocomparability graphs and hence a class of perfect graphs~\cite{Golumbic2004}. Although arising from interval orders through a different construction, Hilbert fragments have a natural place alongside the classical model of one-dimensional structure provided by interval graphs, which are precisely the chordal cocomparability graphs~\cite{GilmoreHoffman1964}.

For an intrinsic description of Hilbert fragments, we introduce and axiomatize a class of vertex-labelings, called \emph{track-labelings}. Our main characterization is the following.

\begin{theorem*}
A graph is a Hilbert fragment if and only if every connected component admits a track-labeling.
\end{theorem*}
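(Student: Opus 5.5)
The plan is to prove the two directions separately, after two preliminary reductions. The class of Hilbert fragments should be closed under disjoint union. Given Hilbert graphs $H(P_1)$ and $H(P_2)$ of interval orders $P_1,P_2$, I would form the ordinal sum (or a disjoint union padded by a separating interval). I would then check, from the description of Hilbert sets in \cite{LengthPolyhedron}, that every Hilbert set of the combined order lies inside one summand. This makes the Hilbert graph of the sum contain $H(P_1)\,\dot\cup\, H(P_2)$ as an induced subgraph. Conversely, a connected component of a Hilbert fragment is itself a Hilbert fragment, since being an induced subgraph of an induced subgraph is hereditary. So it suffices to prove the following: a \emph{connected} graph is a Hilbert fragment if and only if it admits a track-labeling.

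For necessity, fix an interval order $P$ with its canonical representation. I would recall the structural description of Hilbert sets from \cite{LengthPolyhedron}. As binary supports, these are (I expect) sets of intervals determined by "chains of tight length constraints" running between two endpoints of the canonical representation. Each Hilbert set then has a left and right position on the line of endpoints. The natural labeling assigns to each vertex (Hilbert set) the data of its track: the endpoint positions together with the order-theoretic types of its extreme intervals. I would verify each track-labeling axiom directly from the definition of Hilbert sets. Two supports intersect exactly when their tracks overlap in the prescribed way, and the cocomparability structure comes from left-to-right ordering of disjoint tracks. Restricting to an induced connected subgraph preserves all axioms, since the axioms are local, pairwise/triple conditions on labels. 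This direction should be routine bookkeeping once the Hilbert-set description is in hand.

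Sufficiency is the main obstacle: given a connected graph $G$ with a track-labeling, we must construct an interval order $P$ and an injective map from $V(G)$ to Hilbert sets of $P$ realizing exactly the adjacencies of $G$. My first attempt would be to read an interval representation off the labels. Connectivity lets the labels be normalized to a single coherent left-to-right coordinate system, presumably the reason components are treated separately. I would then build the canonical interval representation with one "gadget" of intervals per track segment, plus one private interval per vertex so that distinct vertices give distinct supports. Next I would show that for each vertex $v$ the prescribed set $S_v$ is indeed a Hilbert set. That is, it is a binary vector in the length polyhedron that is not a sum of two nonzero integral points, using the indecomposability criterion from \cite{LengthPolyhedron}, namely the weighted independent-set reduction. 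Finally, $S_u\cap S_v\neq\emptyset$ should hold exactly when $uv\in E(G)$. Nonadjacency is the delicate part, since the shared gadget intervals must be placed so that only label-compatible pairs meet. I expect most of the work to go into choosing the gadgets so that they do not create extra Hilbert sets that disrupt the tight constraints. Extra Hilbert sets are harmless in themselves, because we only need an induced subgraph.
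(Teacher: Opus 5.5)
Your overall plan (reduce to connected graphs via disjoint unions, then prove both directions) matches the paper's. The gap is in sufficiency: you cannot read the representation off an arbitrary track-labeling, because a track-labeling may contain a \emph{bi-fork}. This is a vertex $x$ lying in two induced paths $(a,x,y)$ and $(c,x,z)$ with $\lambda(x)=\lambda(y)=\lambda(z)$ and $\lambda(a)<\lambda(x)<\lambda(c)$. For example, take the gem on $a,x,y,z,c$ with edges $ax,az,xy,xz,yz,cx,cy$ and labels $\lambda(a)=1$, $\lambda(x)=\lambda(y)=\lambda(z)=2$, $\lambda(c)=3$; it satisfies all five axioms. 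Suppose $S_x$ and $S_y$ share the gap set at position $2$. An interval of $S_a\cap S_x$ cannot lie in that gap set, since it would then lie in $S_y$ and force $ay\in E$. So it is a left-borderline interval of $S_x$. Symmetrically, the witness of $cx$ is right-borderline. But the borderline part of a fundamental extender lies on one side of its gap, so no choice of gadgets works with these labels as coordinates. The paper first proves that gap-labelings are bi-fork-free. It then removes bi-forks from a given track-labeling by tectonic $\epsilon$-shifts: in a nontrivial $\beta$-kernel, a sink strongly connected component of an auxiliary digraph is a set whose simultaneous shift preserves all five axioms (here, lowering $z$ to $2-\epsilon$ works), and the process terminates. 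Only then are witnesses built. Even for bi-fork-free labelings, the endpoints of the central intervals are dictated by side-markers, with a special diamond-ring case, so that every borderline set is one-sided. Irreducibility is proved directly from the gap markers $M_c$ in a rational representation and transferred by augmentation. The weighted independent-set reduction you propose presupposes knowing the smaller Hilbert sets, so it does not serve here.

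Necessity also has a gap. A track-label is a single element of $\mathbb Q\cup\{\pm\infty\}$, and the working choice is the position of the gap $I$ in a decomposition $S=G[I]\cup Z$. What you need is that Hilbert sets are exactly the irreducible fundamental extenders, not a description by chains of tight constraints. Labeling all Hilbert sets of an arbitrary underlying $P$ and then restricting cannot work: the axioms are hereditary, yet, as the paper remarks, some disconnected Hilbert fragments have no track-labeling. Concatenating such a fragment's realization with one of your connected fragment gives an underlying $P$ whose $\mathcal H_P$ admits no track-labeling at all. Concretely, at an empty bounded gap of $\mathcal C(P)$, disjoint singleton Hilbert sets decomposed at that gap receive the same finite label, violating \textsc{Clique}. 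Connectivity must therefore be used here: pass to a realization whose canonical representation has no empty bounded gap, by restricting to the block containing all $\varphi(v)$ and re-augmenting. Moreover, \textsc{Path} and \textsc{Claw} are not bookkeeping. They use that a Hilbert set with empty gap set is a single interval. They also use that in an induced path $(x,u,v)$ with $\eta(u)=\eta(v)$, every interval of $S_x\cap S_u$ lies in $Z_u$ on the side fixed by $\eta(x)$, so one-sidedness of $Z_u$ rules out claws whose leaves carry three distinct labels. A smaller point: for disjoint unions you need each Hilbert set of a summand to stay irreducible in the ordinal sum. That does not follow from Hilbert sets of the sum lying in one summand.
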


The paper is organized as follows.
In the rest of Section~\ref{sec:intro} we introduce the most important definitions, 
some well-known facts about interval orders, and a few natural lemmas about Hilbert fragments. 
These lemmas are included primarily for reference and are used throughout the paper.

In Section~\ref{sec:properties} we introduce the track-axioms that define track-labelings and prove that connected Hilbert fragments admit a track-labeling. In Section~\ref{sec:digraph} we introduce an auxiliary digraph and identify a problematic configuration in it, called a \emph{bi-fork}. We prove that the auxiliary digraph arising from a gap-labeling of a Hilbert fragment is bi-fork-free. Section~\ref{sec:bi-fork-free} presents a construction of an interval order representing a connected bi-fork-free track-labeled graph, thus proving that such graphs are Hilbert fragments. However, the auxiliary digraph associated with an arbitrary track-labeling need not be bi-fork-free. In Section~\ref{sec:eliminate} we show that the track-labeling can be adjusted to eliminate all bi-forks, thereby completing the proof of the main theorem.

In Section~\ref{sec:family}, 
we prove that Hilbert fragments are cocomparability graphs. 
In Section~\ref{sec:applications}, we discuss two exploratory applications of Hilbert fragments.
In the concluding Section~\ref{sec:conclusion}, we pose two problems for further research: characterize Hilbert fragments in terms of forbidden induced subgraphs, and determine the complexity of recognizing Hilbert fragments. 
In Appendix~\ref{app:obstructions}, we display all minimal forbidden induced subgraphs of order at most 6. In Appendices~\ref{app:proofs} and~\ref{app:proofC}, we provide detailed technical proofs of auxiliary results.

In this paper all graphs, undirected or directed, are finite and simple. We use standard graph-theoretic terminology and notation as in West \cite{WestBook},
partial-order terminology as defined in Trotter \cite{TrotterBook}, and polyhedral terminology as in Schrijver \cite{SchrijverBook}.

\subsection*{Interval representations, gap sets, and fundamental extenders} \label{intro:interval}

Let $P=(X,\prec)$ be a nonempty finite poset and let $n = |X|$.  The poset $P$ is an
\emph{interval order} if there is a multiset
$$
\mathcal F=\{I_x=[\ell_x,r_x]\subset \mathbb R:x\in X\}
$$
of compact (closed and bounded) intervals such that, for all $x,y\in X$,
$$
x\prec y \quad\Longleftrightarrow\quad r_x<\ell_y.
$$
Such a family $\mathcal F$ is an \emph{interval representation} of $P$.
An interval representation is rational (integral) if all of its intervals have rational (integral) endpoints.
Fix an ordering $X=\{x_1,\ldots,x_n\}$.  All vectors indexed by $X$ are written
with respect to this ordering.
The length of $I_x$ is $\rho_x=r_x-\ell_x$, and
$\rho=(\rho_{x_1},\ldots,\rho_{x_n})\in\mathbb R^n$ is the \emph{length vector} of
$\mathcal F$.  

Let
$$
c_1<c_2<\cdots<c_t
$$
be the distinct endpoints appearing in $\mathcal F$; thus
$$
\{c_1,\ldots,c_t\}
=
\{\ell_x:x\in X\}\cup\{r_x:x\in X\}.
$$
A \emph{bounded gap} of $\mathcal F$ is an interval $[c_{i-1},c_i]$ with
$2\le i\le t$.  We also use two \emph{extreme gaps}, denoted
$[-\infty,c_1]$ and $[c_t,+\infty]$.

For a bounded gap $I=[c_{i-1},c_i]$, define the \emph{gap set} 
$$
G[I]=\{I_x\in\mathcal F:\ell_x\le c_{i-1}\text{ and }c_i\le r_x\}.
$$
Equivalently, $G[I]$ is the set of intervals in $\mathcal F$ containing the
gap $I$.  We write $G[c_{i-1},c_i]$ for this gap set.  The gap sets of the extreme gaps are
$$
G[-\infty,c_1]=\varnothing=G[c_t,+\infty].
$$
A representation $\mathcal F$ is \emph{connected} if these two are its only empty gap sets.

For a bounded gap $I=[c_{i-1},c_i]$, define its \emph{left-borderline set} and
\emph{right-borderline set} as
$$
L[I]=\{I_x\in\mathcal F:r_x=c_{i-1}\},
\qquad
R[I]=\{I_x\in\mathcal F:\ell_x=c_i\}.
$$
Thus $L[I]$ consists of intervals ending at the left-hand endpoint of the gap,
and $R[I]$ consists of intervals beginning at the right-hand endpoint of the
gap.  The borderline sets for the extreme gaps are
$$
L[-\infty,c_1]=\varnothing,
\qquad
R[-\infty,c_1]=\{I_x\in\mathcal F:\ell_x=c_1\},
$$
and
$$
L[c_t,+\infty]=\{I_x\in\mathcal F:r_x=c_t\},
\qquad
R[c_t,+\infty]=\varnothing.
$$
	
A nonempty subfamily $S\subseteq\mathcal F$ is a \emph{fundamental extender}
with respect to $\mathcal F$ if there is a gap $I$ of $\mathcal F$ and a
subfamily $Z\subseteq\mathcal F$ such that
$$
S=G[I]\cup Z
$$
and either
$$
Z\subseteq L[I]
\qquad\text{or}\qquad
Z\subseteq R[I].
$$
In the first case $S$ is a \emph{left fundamental extender} at $I$, and in the second
case $S$ is a \emph{right fundamental extender} at $I$.  Such an expression is called
a \emph{gap--borderline decomposition} (or simply decomposition) of $S$.\footnote{In~\cite{LengthPolyhedron}, gap sets, borderline sets, and fundamental extenders were defined only for the canonical representation, as subsets of the ground set \(X\). In contrast, the present paper does not assume a fixed representation \emph{a priori}, and these objects are defined as subfamilies of the intervals in the given representation. For a fixed representation, the two conventions are equivalent via the correspondence \(x\leftrightarrow I_x\). However, the present convention is more convenient when interval representations are augmented or concatenated.}
Geometrically, a fundamental extender records a set of intervals whose
lengths increase when a single gap is enlarged, together with an arbitrary
choice of intervals attached to one side of that gap.

A fundamental extender may have more than one gap--borderline
decomposition.  For each fundamental extender
used in the paper, we choose and fix one decomposition.
All later references to the gap, side, or borderline set are made with respect to
this fixed choice.  

For a fixed decomposition $S=G[I]\cup Z$, every set
$G[I]\cup Z'$ with $Z'\subseteq Z$ is again a fundamental extender.  Thus, shrinking
the chosen borderline part of a fundamental extender produces smaller fundamental
extenders contained in it.  More generally, the phrase smaller fundamental extender
contained in $S$ means a proper subset $T\subsetneq S$ that is itself a fundamental
extender.

A fundamental extender $S$ is \emph{reducible with respect to $\mathcal F$}
if it admits a nontrivial partition into fundamental extenders with respect to
$\mathcal F$; that is, if there exist fundamental extenders
$S_1,\ldots,S_t$, with $t\ge 2$, such that each $S_i$ is properly contained in
$S$ and
$$
S=S_1\dot{\cup}S_2\dot{\cup}\cdots\dot{\cup}S_t.
$$
If no such partition exists, then $S$ is \emph{irreducible with respect to
$\mathcal F$}. When $\mathcal F$ is understood, we simply say that $S$ is
reducible or irreducible.

Given a collection $\mathcal S$ of irreducible fundamental extenders with
respect to a rational representation $\mathcal F$, the fixed decompositions of
these extenders determine a natural map, called the \emph{gap labeling} of
$\mathcal S$,
$$
\eta_{\mathcal F}:\mathcal S\to \mathbb Q\cup\{-\infty,+\infty\}.
$$
If $S=G[I]\cup Z$ is the fixed decomposition of $S$, define
$$
\eta_{\mathcal F}(S)=
\begin{cases}
	c_i, & \text{if } I=[c_{i-1},c_i] \text{ for some } i\in\{2,\ldots,t\},\\
	-\infty, & \text{if } I=[-\infty,c_1],\\
	+\infty, & \text{if } I=[c_t,+\infty].
\end{cases}
$$

\subsection*{Canonical representations and Hilbert sets} \label{intro:canonical}
Every interval order $P=(X,\prec)$  admits an essentially unique \emph{minimal-endpoint
representation}, where uniqueness is understood after disregarding the physical
locations of the endpoints \cite[Theorem 2.4]{Greenough}. 
The representation we now describe is 
called the minimal-endpoint representation, or the \emph{canonical representation} \cite{LengthPolyhedron}.

For an interval order $P=(X,\prec)$, let
$$
D(x)=\{y:y\prec x\},\qquad U(x)=\{y:x\prec y\}.
$$
The distinct down-sets and up-sets form inclusion chains of the same cardinality:
$$
\varnothing=D_0\subset\cdots\subset D_{m-1},\qquad
U_0\supset\cdots\supset U_{m-1}=\varnothing.
$$
The common cardinality $m=m(P)$, called the \emph{magnitude} of $P$, is the minimum number of distinct endpoints in any interval representation of $P$. The \emph{canonical representation} $\mathcal C(P)$ assigns to each $x\in X$ the interval
$$
I_x^*=[i,j]\quad\text{where}\quad D(x)=D_i,\quad U(x)=U_j,
$$
and these intervals are called \emph{canonical intervals}. 
Greenough's result  \cite[Theorem 2.4]{Greenough} on the minimal-endpoint representation  has the following corollary.

\begin{proposition}
\label{prop:canonical}
A family of intervals  with a set $S$ of endpoints is the canonical representation of some interval order if and only if every $p\in S$ occurs both as
a left endpoint and as a right endpoint of some interval.
\end{proposition}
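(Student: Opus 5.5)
We prove both directions directly from the definition of the canonical representation. The canonical intervals $I_x^*=[i,j]$ of $\mathcal C(P)$ have endpoints in $\{0,1,\dots,m-1\}$, with $D(x)=D_i$ and $U(x)=U_j$.

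For necessity, let $p\in\{0,\dots,m-1\}$. We need some $x$ with $D(x)=D_p$ and some $y$ with $U(y)=U_p$. By definition, $D_p$ is a distinct down-set, so $D_p=D(x)$ for some $x$, and $x$ has left endpoint $p$. Likewise $U_p=U(y)$ for some $y$, and $y$ has right endpoint $p$. This part is immediate. If ``canonical representation'' is meant up to order-preserving relabeling of endpoints, the property is clearly invariant under such relabeling.

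For sufficiency, let $\mathcal F=\{[\ell_x,r_x]\}$ be a family with endpoint set $S=\{c_1<\dots<c_t\}$, and assume every $c_k$ is both a left and a right endpoint. Let $P$ be the interval order it represents, so $x\prec y$ iff $r_x<\ell_y$. We show that the distinct down-sets of $P$ are exactly $D^{(k)}=\{y: r_y<c_k\}$ for $k=1,\dots,t$, and that they are pairwise distinct.

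Indeed, $D(x)=\{y:r_y<\ell_x\}=D^{(k)}$ where $\ell_x=c_k$, and every $k$ arises because $c_k$ is a left endpoint. Strictness $D^{(k)}\subsetneq D^{(k+1)}$ holds because $c_k$ is a right endpoint: some $y$ has $r_y=c_k$, so $y\in D^{(k+1)}\setminus D^{(k)}$.

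Symmetrically, the up-sets are $U^{(k)}=\{y:\ell_y>c_k\}$. They are all realized because each $c_k$ is a right endpoint, and they are strictly decreasing because each $c_{k+1}$ is a left endpoint.

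Hence $m(P)=t$. Moreover $x$ has $D(x)=D_{k-1}$ when $\ell_x=c_k$, and $U(x)=U_{j-1}$ when $r_x=c_j$, with indices shifted to start at $0$. So the map $c_k\mapsto k-1$ carries $\mathcal F$ onto $\mathcal C(P)$, and $\mathcal F$ is the canonical representation up to an order-preserving relabeling of endpoints. This is the sense of uniqueness fixed above via Greenough's theorem \cite[Theorem 2.4]{Greenough}.

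The one delicate point is bookkeeping rather than mathematics: matching the index conventions so that the $k$-th left endpoint corresponds to the $(k-1)$-st down-set and the $k$-th right endpoint to the $(k-1)$-st up-set. We also need to confirm that ``canonical representation'' is read up to endpoint relabeling, which is what the phrase ``essentially unique'' and the corollary status of the statement indicate. Alternatively, sufficiency follows from Greenough's result: the family uses $t$ endpoints, and $t=m(P)$ by the computation above, so it is a minimal-endpoint representation and therefore canonical.
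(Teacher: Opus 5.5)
Your proof is correct, but it takes a different route from the paper. The paper gives no argument of its own. It states the proposition as a corollary of Greenough's theorem that a minimal-endpoint representation is essentially unique up to the positions of the endpoints.

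You instead verify both directions directly from the explicit definition $I_x^*=[i,j]$ with $D(x)=D_i$ and $U(x)=U_j$. Necessity is immediate, since each $D_p$ and each $U_p$ is realized by some element. For sufficiency you compute $D(x)=\{y:r_y<\ell_x\}$ and $U(x)=\{y:\ell_y>r_x\}$. ``Every $c_k$ is a left endpoint'' makes every threshold down-set realized, and ``every $c_k$ is a right endpoint'' makes consecutive ones distinct; the up-sets are handled symmetrically. This gives $m(P)=t$, with $c_k\mapsto k-1$ carrying $\mathcal F$ onto $\mathcal C(P)$.

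The citation route is shorter, but it rests on an external thesis and on the informal phrase ``essentially unique.'' Your route is self-contained and never uses the uniqueness theorem. It also yields exactly the form the paper relies on later: a family whose endpoint set is precisely $\{0,1,\dots,t-1\}$, as in the augmentation and concatenation constructions, is literally equal to $\mathcal C(P)$.

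Your closing alternative via Greenough is not independent of your direct argument. To know the family is a minimal-endpoint representation you still need $t\le m(P)$, that is, at least $t$ distinct down-sets. That is precisely the realization-plus-strictness computation you already carried out, so the direct argument is the real content either way.
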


Let $P=(X,\prec)$ be an interval order and let $\mathcal C(P)$ be the canonical representation on distinct endpoints $\{0,1,\ldots,m-1\}$, where $m=m(P) $ is the magnitude of $P$; equivalently, $m$ is the minimum number of distinct endpoints in an interval representation of $P$.

The \emph{canonical length vector} is the length vector of $\mathcal C(P)$, whose components are the lengths of the canonical intervals.
	
	
	Let $Q_P\subseteq \mathbb R^n$ be the \emph{length polyhedron of $P$}, the
	convex hull of all length vectors of integral interval representations of $P$.
	Recall several basic facts about $Q_P$ (for details, see \cite{LengthPolyhedron}). 
	The canonical length vector
	$\rho^*$ is the vertex of the affine cone $Q_P$, and we write
	$$
	Q_P=\rho^*+Q_P^*
	$$
	for the corresponding pointed rational cone.  
	
	Recall that a \emph{Hilbert basis} for a polyhedral cone is a finite set of vectors 
	such that each integral vector in the cone
	is a nonnegative integral linear combination of vectors from this set.  A Hilbert basis
	is {\em integral} if it consists entirely of integral vectors.  
	For a subfamily $S\subseteq\mathcal C(P)$, let
	$\chi_S\in\{0,1\}^n$ denote the characteristic vector whose $j$th coordinate is $1$ if
	$I_{x_j}^*\in S$, and is $0$ otherwise. 
	The characteristic vectors $\chi_S$, as $S$ ranges over all fundamental extenders with respect to $\mathcal C(P)$, form an integral
	Hilbert basis for $Q_P^*$ (Theorem 6.1 of \cite{LengthPolyhedron}).
	
	Geometrically, a fundamental extender with respect to $\mathcal C(P)$ records a basic
	way to extend a collection of canonical intervals simultaneously; its characteristic
	vector is the corresponding binary direction in the cone $Q_P^*$.  Since $Q_P^*$ is
	pointed and rational, it has a unique minimal integral Hilbert basis (see page 233 of \cite{SchrijverBook}).  A fundamental
	extender $S$ with respect to $\mathcal C(P)$ is called a \emph{Hilbert set} of $P$ if $\chi_S$ belongs to this unique minimal Hilbert
	basis for $Q_P^*$.
	Equivalently (see Theorem~6.2 of \cite{LengthPolyhedron}), Hilbert sets are the irreducible fundamental extenders with respect to $\mathcal C(P)$.  
	
The next lemma captures a useful fact about Hilbert sets.
\begin{lemma}\label{lem:Hilbertset}
	If $S$ is a Hilbert set with a gap--borderline decomposition that has an empty gap set, then the borderline set contains exactly one interval, that is $|S|=1$.
\end{lemma}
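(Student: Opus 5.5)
Suppose $S$ is a Hilbert set with a fixed decomposition $S=G[I]\cup Z$ in which $G[I]=\varnothing$. I will prove directly that $|S|=1$.

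\textbf{Step 1 (every singleton of $Z$ is a fundamental extender).} Since $G[I]=\varnothing$, we have $S=Z$, and $Z$ is nonempty because fundamental extenders are nonempty by definition. The decomposition is one-sided, so either $Z\subseteq L[I]$ or $Z\subseteq R[I]$; by symmetry assume $Z\subseteq L[I]$. As noted after the definition of a decomposition, for any $Z'\subseteq Z$ the set $G[I]\cup Z'=Z'$ is again a fundamental extender whenever it is nonempty. In particular, for each interval $J\in Z$ the singleton $\{J\}=G[I]\cup\{J\}$ is a (left) fundamental extender at the same gap $I$.

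\textbf{Step 2 (irreducibility forces one element).} Suppose $|Z|\ge 2$. Then the partition of $S=Z$ into the singletons $\{J\}$, $J\in Z$, consists of at least two fundamental extenders, each properly contained in $S$ and pairwise disjoint. This is a nontrivial partition in the sense of the definition of reducibility, so $S$ is reducible. But Hilbert sets are exactly the irreducible fundamental extenders with respect to $\mathcal C(P)$ (Theorem~6.2 of \cite{LengthPolyhedron}), a contradiction. Hence $|Z|=1$, so the borderline set contains exactly one interval and $|S|=1$.

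\textbf{Step 3 (checking the only delicate point).} The one thing to confirm is that a singleton is a legitimate fundamental extender at every kind of gap, including the extreme gaps $[-\infty,c_1]$ and $[c_t,+\infty]$. There the gap sets are empty by definition, and the borderline sets $R[-\infty,c_1]$ and $L[c_t,+\infty]$ are defined, so the definition of a fundamental extender applies verbatim. For a bounded gap with $G[I]=\varnothing$ the same reasoning holds, although in the canonical representation such gaps should not occur. Thus the argument needs no case distinction by gap type.
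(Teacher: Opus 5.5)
Your proof is correct and is essentially the paper's argument: since $G[I]=\varnothing$, each singleton $\{J\}$ with $J\in Z$ is a fundamental extender, so $|Z|\ge 2$ would partition $S$ into singletons and contradict irreducibility (the paper also observes that the singletons are irreducible, but your proof rightly does not need this). One harmless inaccuracy in Step 3: empty bounded gaps do occur in canonical representations of disconnected interval orders (e.g.\ $a\prec b$ has canonical intervals $[0,0]$ and $[1,1]$, and the gap $[0,1]$ is empty), but as you say, your argument does not depend on the gap type.
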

\begin{proof}
	Assume, to the contrary, that the borderline set contains more than one interval. 
	Since the gap set is empty, each singleton subfamily consisting of one interval
	from the borderline set is an irreducible fundamental extender, hence a Hilbert
	set.  
	This contradicts the irreducibility of S.
\end{proof}
	
\subsection*{Hilbert fragments and their gap labelings} \label{intro:fragments}

Let $P=(X,\prec)$ be an interval order.  The \emph{Hilbert extender graph} of $P$,
denoted $\mathcal H_P$, is the intersection graph of the Hilbert sets of $P$.
Thus the vertices of $\mathcal H_P$ are the Hilbert sets, and two vertices
corresponding to Hilbert sets $S$ and $T$ are adjacent if and only if
$$
S\cap T\ne\varnothing,
$$
where $S$ and $T$ are regarded as subfamilies of $\mathcal C(P)$; equivalently, they contain a common canonical interval $I_x^*$.

A graph $H=(V,E)$ is a \emph{Hilbert fragment} (or simply \emph{fragment}) if there is an interval order $P$ such
that $H$ is isomorphic to an induced subgraph of the Hilbert extender graph $\mathcal H_P$.  In this case,
we call $P$ an \emph{underlying interval order} for $H$.  More precisely, a
\emph{realization} of $H$ consists of an interval order $P$ together with an
injective map
$$
\varphi:V\to V(\mathcal H_P)
$$
such that, for all distinct $u,v\in V$,
$$
uv\in E
\quad\Longleftrightarrow\quad
\varphi(u)\cap\varphi(v)\ne\varnothing,
$$
where each $\varphi(v)$ is regarded as the corresponding Hilbert set.
The canonical representation of $P$,
together with $\varphi$, is called an \emph{underlying representation} of the fragment.
A Hilbert fragment may have many underlying interval orders.  

A \emph{gap-labeled Hilbert fragment} is a pair $(H,\eta)$, where $H$ is a
Hilbert fragment and $\eta$ is a vertex labeling obtained from some
realization of $H$ as follows.  Let $(P,\varphi)$ be a realization of $H$, and
let $m$ be the magnitude of $P$.  For each $v\in V$, choose a gap--borderline decomposition of
the Hilbert set $\varphi(v)$ in the canonical representation $\mathcal C(P)$:
$$
\varphi(v)=G[I_v]\cup Z_v.
$$
The associated gap-labeling is the map
$$
\eta:V\to \{1,\ldots,m-1\}\cup\{-\infty,+\infty\}
$$
defined by
$$
\eta(v)=
\begin{cases}
	i, & \text{if } I_v=[i-1,i] \text{ for some } i\in\{1,\ldots,m-1\},\\
	-\infty, & \text{if } I_v=[-\infty,0],\\
	+\infty, & \text{if } I_v=[m-1,+\infty].
\end{cases}
$$
We say that $(P,\varphi)$, together with the chosen decompositions of the
Hilbert sets $\varphi(v)$, \emph{realizes} the gap-labeled fragment
$(H,\eta)$.  Since underlying interval orders and gap--borderline
decompositions need not be unique, the same Hilbert fragment may admit many
gap-labelings.

For a realized gap-labeled Hilbert fragment, we write
$$
S_v=\varphi(v),\qquad G_v=G[I_v],
$$
so that
$$
S_v=G_v\cup Z_v
$$
is the chosen gap--borderline decomposition of the Hilbert set corresponding
to $v$.

\subsection*{Augmentation and concatenation} \label{intro:augmentationAndConcatenation}

To construct realizations of Hilbert fragments, we need a way to pass from a rational interval representation $\mathcal F$ to a canonical representation without losing the irreducible fundamental extenders determined by $\mathcal F$.  This is achieved via \emph{augmentation}: from $\mathcal F$ we build an augmented canonical representation $\mathcal F^+$ of a larger interval order $P^+$ containing $P$ as a subposet.  Lemma~\ref{lem:augmentation} shows that every irreducible fundamental extender with respect to $\mathcal F$ becomes a Hilbert set of $P^+$.  We now describe this construction.

Let
$\mathcal F=\{[\ell_x,r_x]:x\in X\}$ be a rational interval representation of
an interval order $P=(X,\prec)$, and let
$$
c_1<c_2<\cdots<c_t
$$
be the distinct endpoints of $\mathcal F$.  Choose a positive integer $N$ such
that
$$
d_i=N(c_i-c_1)
$$
is an integer for every $i=1,\ldots,t$.  Thus
$$
0=d_1<d_2<\cdots<d_t=M
$$
for some integer $M$.  Let
$$
\alpha(s)=N(s-c_1).
$$
For each $x\in X$, define
$$
I_x^+=[\alpha(\ell_x),\alpha(r_x)].
$$
For each integer $q=0,1,\ldots,M$, add a new element $p_q$ represented by the
singleton interval $[q,q]$.  Let
$$
X^+=X\cup\{p_0,p_1,\ldots,p_M\},
$$
and let
$$
\mathcal F^+=\{I_x^+:x\in X\}\cup\{[q,q]:0\le q\le M\}.
$$
The positive affine map $\alpha$ preserves the order relations among the
elements of $X$, so the interval order $P^+$ represented by $\mathcal F^+$
contains $P$ as the subposet on $X$.  Since every integer in
$\{0,1,\ldots,M\}$ occurs in $\mathcal F^+$ as both a left endpoint and a right
endpoint, $\mathcal F^+$ is the canonical representation of $P^+$ by Proposition~\ref{prop:canonical}.  The poset
$P^+$ and representation $\mathcal F^+$ are called the \emph{augmentation} and
\emph{augmented representation}, respectively, determined by $P$ and
$\mathcal F$.

\begin{lemma} \label{lem:augmentation}
	Suppose that $\mathcal F=\{I_x=[\ell_x,r_x]:x\in X\}$ is a rational interval
	representation of an interval order $P=(X,\prec)$.  Let $P^+$ and
	$\mathcal F^+$ be the augmentation determined by $P$ and $\mathcal F$.  If
	$S\subseteq \mathcal F$ is an irreducible fundamental extender with respect to
	$\mathcal F$, then
	$$
	S^+=\{I_x^+:I_x\in S\}
	$$
	is a Hilbert set of $P^+$.
\end{lemma}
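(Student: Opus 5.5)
We prove two things: first that $S^+$ is a fundamental extender with respect to $\mathcal F^+$, and then that it is irreducible there. Since Hilbert sets are exactly the irreducible fundamental extenders with respect to the canonical representation (Theorem~6.2 of \cite{LengthPolyhedron}), and $\mathcal F^+$ is canonical, this suffices. The basic tool is a comparison of gaps. Each bounded gap $[c_{i-1},c_i]$ of $\mathcal F$ becomes the real interval $[d_{i-1},d_i]$. That interval is subdivided in $\mathcal F^+$ into unit gaps $[q-1,q]$ with $d_{i-1}<q\le d_i$. For each such unit gap, the members of $\mathcal F^+$ containing it are exactly the images $I_x^+$ with $I_x\in G[c_{i-1},c_i]$; the singletons $[q,q]$ contain no bounded gap.

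\textbf{Step 1 ($S^+$ is a fundamental extender).} Let $S=G[I]\cup Z$ be the fixed decomposition, with $I=[c_{i-1},c_i]$. If $Z\subseteq L[I]$, use the unit gap $[d_{i-1},d_{i-1}+1]$; if $Z\subseteq R[I]$, use $[d_i-1,d_i]$. In either case the gap set of the chosen unit gap is $G[I]^+$. The images of $Z$ end at $d_{i-1}$ (respectively start at $d_i$), so they lie in the corresponding borderline set of $\mathcal F^+$. That borderline set also contains the singleton $[d_{i-1},d_{i-1}]$ (respectively $[d_i,d_i]$), but we are free to omit it, since any subset of the borderline set is allowed. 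For the extreme gaps the images stay on the extreme side: for example $R[-\infty,c_1]$ maps into $R[-\infty,0]$. So $S^+=G^+\cup Z^+$ is a fundamental extender.

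\textbf{Step 2 (irreducibility).} Suppose $S^+=T_1\dot\cup\cdots\dot\cup T_k$ with $k\ge2$, each $T_j$ a proper fundamental extender with respect to $\mathcal F^+$. Every $T_j\subseteq S^+$ contains no singleton interval, so it consists of images of members of $\mathcal F$. We claim the preimage $T_j^-$ is a fundamental extender with respect to $\mathcal F$. Write $T_j=G[J]\cup W$ with $J=[q-1,q]$, and let $[c_{s-1},c_s]$ be the gap of $\mathcal F$ whose image contains $J$. Then $G[J]$ is the image of $G[c_{s-1},c_s]$. If $W$ is nonempty and, say, $W\subseteq L[J]$, then every interval in $W$ ends at $q-1$, and $q-1$ must be an endpoint $d$ coming from $\mathcal F$. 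Since $W$ contains no singletons, we must have $q-1=d_{s-1}$, so $W^-\subseteq L[c_{s-1},c_s]$. The right-side case and the extreme gaps are handled symmetrically. If $W$ is empty, $T_j^-$ is just a gap set, which is a fundamental extender with empty borderline part. Pulling the partition back through the bijection $I_x\mapsto I_x^+$ then gives a nontrivial partition of $S$ into proper fundamental extenders with respect to $\mathcal F$, contradicting the irreducibility of $S$.

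The main obstacle is Step 2, specifically the bookkeeping showing that a unit gap with a nonempty non-singleton borderline part must sit at the boundary of an original gap. This needs care in the degenerate cases: extreme gaps, an empty $G[J]$, and distinct original intervals with the same image endpoints. All of these reduce to the observation that every endpoint of an original image is one of the $d_i$.
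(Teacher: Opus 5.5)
Your proof is correct and follows the paper's argument essentially step for step. Step 1 uses the same unit gaps $[d_{i-1},d_{i-1}+1]$ and $[d_i-1,d_i]$, and Step 2 pulls back each part $T_j$ in the same way: locate its unit gap inside an original gap, and observe that a nonempty borderline part forces that unit gap to sit at $d_{s-1}$ (respectively $d_s$).

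One wording fix: $\mathcal F$ may itself contain degenerate intervals (the private tags are an example). So ``contains no singleton'' should read ``contains none of the added intervals $[q,q]$''. What actually does the work is your closing observation that every endpoint of an image lies in $\{d_1,\dots,d_t\}$.
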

\begin{proof}
See Appendix~\ref{app:proofs}.
\end{proof}

The characterization of Hilbert fragments reduces to the connected case.  The reduction is obtained by concatenating underlying representations of the connected components, an operation called \emph{concatenation}.  We now describe this concatenation operation and prove that it preserves the selected Hilbert sets (Lemma~\ref{lem:concatenation}); this yields the closure of Hilbert fragments under disjoint union (Lemma~\ref{lem:disjointunion}).

Let $P_1,\ldots,P_k$ be interval orders on pairwise disjoint ground sets
$X_1,\ldots,X_k$, and let $\mathcal C(P_i)$ be the canonical representation of
$P_i$.  Suppose that the magnitude of $P_i$ is $m_i$, and write the endpoint
set of $\mathcal C(P_i)$ as $\{0,1,\ldots,m_i-1\}$.  Set
$$
a_1=0,
\qquad
a_{i+1}=a_i+m_i
\quad\text{for }i=1,\ldots,k.
$$
Translate $\mathcal C(P_i)$ by $a_i$, so that its endpoint set becomes
$$
\{a_i,a_i+1,\ldots,a_i+m_i-1\}.
$$
Let $\mathcal C_i'$ denote this translated copy of $\mathcal C(P_i)$, let
$$
\mathcal C=\mathcal C_1'\cup\cdots\cup\mathcal C_k',
$$
and let $P$ be the interval order represented by $\mathcal C$.  The resulting
interval order $P$ is the \emph{ordered concatenation} of the $P_i$ (also called \emph{lexicographic sum over a chain}).  Observe
that $\mathcal C$ is the canonical representation of $P$.

If $A\subseteq\mathcal C(P_i)$ is a subfamily, write $A^{(i)}\subseteq
\mathcal C_i'$ for its translated image.

\begin{lemma} \label{lem:concatenation}
	Let $P$ be the ordered concatenation of interval orders $P_1,\ldots,P_k$.  If
	$S$ is a Hilbert set of $P_i$, then  $S^{(i)}$ is a Hilbert set of $P$.
\end{lemma}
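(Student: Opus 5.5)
The plan is to use the characterization recalled above: Hilbert sets of an interval order are exactly the irreducible fundamental extenders with respect to its canonical representation. Since $\mathcal C=\mathcal C_1'\cup\cdots\cup\mathcal C_k'$ is the canonical representation of $P$, it suffices to establish two things. First, $S^{(i)}$ is a fundamental extender with respect to $\mathcal C$. Second, every fundamental extender with respect to $\mathcal C$ that is contained in $S^{(i)}$ is the translate $T^{(i)}$ of a fundamental extender $T$ with respect to $\mathcal C(P_i)$. Irreducibility then transfers directly. A nontrivial partition $S^{(i)}=T_1\dot\cup\cdots\dot\cup T_t$ into fundamental extenders of $\mathcal C$ would translate back to a nontrivial partition of $S$ into fundamental extenders of $\mathcal C(P_i)$. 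This contradicts the assumption that $S$ is a Hilbert set.

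The core step is a complete description of the gaps of $\mathcal C$ and their gap and borderline sets. The endpoint set of $\mathcal C$ is $\{0,1,\ldots,a_{k+1}-1\}$, and each interval of $\mathcal C_j'$ lies inside $[a_j,a_j+m_j-1]$. There are two kinds of bounded gaps.

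\emph{Internal gaps.} An internal gap $[a_j+s-1,a_j+s]$ with $1\le s\le m_j-1$ is covered only by intervals of $\mathcal C_j'$. Hence its gap set and both borderline sets are exactly the translates of those of the gap $[s-1,s]$ of $\mathcal C(P_j)$.

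\emph{Junction gaps.} A junction gap $[a_j+m_j-1,a_{j+1}]$ has empty gap set. Its left borderline set is the translate of $L[m_j-1,+\infty]$ in $\mathcal C(P_j)$. Its right borderline set is the translate of $R[-\infty,0]$ in $\mathcal C(P_{j+1})$.

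The two extreme gaps of $\mathcal C$ correspond in the same way to the left extreme gap of $P_1$ and the right extreme gap of $P_k$.

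Both directions now follow from this table. Let $S=G[I]\cup Z$ be the fixed decomposition in $\mathcal C(P_i)$. If $I$ is bounded, the translated internal gap gives a decomposition of $S^{(i)}$. If $I$ is an extreme gap of $P_i$, then $G[I]=\varnothing$ and $Z$ is contained in an extreme borderline set. In that case the adjacent junction gap, or an extreme gap of $\mathcal C$ when $i=1$ or $i=k$, has empty gap set and the required translated borderline set on the appropriate side. So $S^{(i)}$ is a fundamental extender of $\mathcal C$.

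Conversely, any fundamental extender of $\mathcal C$ has the form $G[J]\cup Z'$ where $Z'$ lies on one side of $J$. By the table, the whole set lies inside a single block $\mathcal C_j'$ and is the translate of a fundamental extender of $\mathcal C(P_j)$. This holds because at a junction gap the gap set is empty and $Z'$ lies entirely in one of the two borderline sets, each belonging to a single block. If such an extender is contained in $S^{(i)}$ and is nonempty, then $j=i$.

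I expect the main obstacle to be the bookkeeping at the junction gaps, which is exactly where blocks interact. The key point to check is that consecutive blocks share no endpoint, since $a_{i+1}=a_i+m_i>a_i+m_i-1$. It follows that no interval crosses a junction and that a one-sided borderline choice never mixes two blocks. The internal gaps and the transfer of irreducibility are routine once this is in place.
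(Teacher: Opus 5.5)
Your proposal is correct and follows essentially the same route as the paper. You translate the fixed decomposition of $S$ to show that $S^{(i)}$ is a fundamental extender of $\mathcal C$. You then show, via your gap table (the paper's ``local restriction property''), that every fundamental extender of $\mathcal C$ contained in block $i$ pulls back to a fundamental extender of $\mathcal C(P_i)$, so a nontrivial partition of $S^{(i)}$ would yield one of $S$. The only addition in the paper is an explicit check, via Proposition~\ref{prop:canonical}, that $\mathcal C$ is canonical, since every integer in $\{0,\ldots,a_{k+1}-1\}$ is both a left and a right endpoint; you took this from the setup.
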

\begin{proof}
See Appendix~\ref{app:proofs}.
\end{proof}

Consequently, we have this useful conclusion.

\begin{lemma} \label{lem:disjointunion}
	The disjoint union of Hilbert fragments is a Hilbert fragment.
\end{lemma}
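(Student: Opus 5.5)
Let $H_1,\ldots,H_k$ be Hilbert fragments, and let $H$ be their disjoint union. I will combine their realizations by ordered concatenation and apply Lemma~\ref{lem:concatenation}. For each $i$, fix a realization $(P_i,\varphi_i)$ of $H_i$. After renaming ground elements we may assume the ground sets $X_i$ of the $P_i$ are pairwise disjoint. Let $P$ be the ordered concatenation of $P_1,\ldots,P_k$. Its canonical representation is $\mathcal C=\mathcal C_1'\cup\cdots\cup\mathcal C_k'$, where $\mathcal C_i'$ is the translated copy of $\mathcal C(P_i)$. For $v\in V(H_i)$ define
$$
\varphi(v)=\varphi_i(v)^{(i)}.
$$
By Lemma~\ref{lem:concatenation}, each $\varphi(v)$ is a Hilbert set of $P$, so $\varphi$ maps $V(H)$ into $V(\mathcal H_P)$.

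I then check that $\varphi$ is injective and preserves adjacency and non-adjacency in both directions. First take two vertices $u,v$ of the same component $H_i$. Translation by $a_i$ is a bijection from $\mathcal C(P_i)$ onto $\mathcal C_i'$, given by $I_x^*\mapsto I_x^*+a_i$. The intervals are indexed by the distinct elements of $X_i$, so this bijection respects the multiset structure. Hence $\varphi(u)\cap\varphi(v)\neq\varnothing$ holds if and only if $\varphi_i(u)\cap\varphi_i(v)\neq\varnothing$, which holds if and only if $uv\in E(H_i)$. The same bijection shows that $\varphi(u)\neq\varphi(v)$ whenever $u\neq v$.

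Next take $u\in V(H_i)$ and $v\in V(H_j)$ with $i\neq j$. Then $\varphi(u)\subseteq\mathcal C_i'$ and $\varphi(v)\subseteq\mathcal C_j'$. These two subfamilies are disjoint because they consist of intervals indexed by the disjoint ground sets $X_i$ and $X_j$; their endpoint ranges are also disjoint. Hemce $\varphi(u)\cap\varphi(v)=\varnothing$, which matches the fact that $u$ and $v$ are non-adjacent in $H$. For the same reason $\varphi(u)\neq\varphi(v)$: each Hilbert set is nonempty, since a fundamental extender is nonempty by definition, and the two sets are disjoint.

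Therefore $(P,\varphi)$ is a realization of $H$, and $H$ is a Hilbert fragment. There is no real obstacle here, since all the work is done by Lemma~\ref{lem:concatenation}. The only point needing care is to treat the intervals as indexed by ground elements rather than as bare intervals. Otherwise two copies of equal translated intervals could be confused, and they cannot be here because the translations place the $\mathcal C_i'$ on disjoint endpoint sets.
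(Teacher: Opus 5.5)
Your proof is correct and follows essentially the same route as the paper: you take realizations of the components, form their ordered concatenation, and invoke Lemma~\ref{lem:concatenation} to see that the translated sets are Hilbert sets of $P$. You then observe, as the paper does, that intersections within a block are unchanged while sets from different blocks are disjoint; your explicit checks of injectivity and of intervals being indexed by ground elements simply spell out what the paper leaves implicit.
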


\begin{proof}
	Suppose $G_1,\ldots,G_k$ are Hilbert fragments, and choose realizations
	$(P_i,\varphi_i)$ of $G_i$.  Concatenating the canonical representations
	$\mathcal C(P_i)$ gives an interval order $P$ in which the translated image of
	every selected Hilbert set $\varphi_i(v)$ is a Hilbert set, by
	Lemma~\ref{lem:concatenation}.  Hilbert sets selected from different blocks
	share no represented canonical interval, while intersections among selected
	Hilbert sets inside a single block are unchanged.  Thus the map
	$$
	\varphi(v)=\varphi_i(v)^{(i)}
	\quad\text{for }v\in V(G_i)
	$$
	realizes the disjoint union of the $G_i$ as an induced subgraph of
	$\mathcal H_P$.
\end{proof}

Our goal is to characterize Hilbert fragments.  Since induced subgraphs of
Hilbert extender graphs are Hilbert fragments, every connected component of a
Hilbert fragment is again a Hilbert fragment.  This observation, together with
Lemma~\ref{lem:disjointunion}, shows that it suffices to characterize connected
Hilbert fragments.


\section{Track-labeling axioms}
\label{sec:properties}

This section introduces the track-axioms for track-labelings and proves that every
connected Hilbert fragment admits one
(Proposition~\ref{prop:axioms_satisfied}). 
This sets the stage for
the paper’s main characterization theorem, Theorem~\ref{thm:characterization},
which states that a connected graph is a Hilbert fragment if and only if it
admits a track-labeling.

Let $H=(V,E)$ be a connected, simple, undirected graph. We consider vertex labels from $\mathbb{Q}\cup\{-\infty,+\infty\}$. 
Rational labels are called \emph{finite labels}; $\pm\infty$ labels are called \emph{extreme labels}.

A function
$$
\lambda:V\to\mathbb{Q}\cup\{-\infty,+\infty\}
$$
is called a \emph{track-labeling} of $H$ if it satisfies the following track-axioms:

\begin{description}
[style=standard,
leftmargin=\leftmargini,
itemindent=-\parindent]
\item[\clique] Vertices with the same finite label induce a clique.
\item[\independence] Vertices with extreme labels form an independent set.
\item[\path] If $(x,u,v)$ is an induced path and $\lambda(u)=\lambda(v)$, then $\lambda(x)$ is finite.
\item[\betweenness] If $\lambda(a)<\lambda(b)<\lambda(c)$ and
$ac\in E$,\ then $ab\in E$ and $bc\in E$.
\item[\claw] In every induced claw, at least two leaves have the same extreme label.
\end{description}

A graph together with a track-labeling is called a \emph{track-labeled graph}. An example of a track-labeled graph is given in Figure~\ref{fig:running-example-graph}.
When the labeling is relevant, we write $(H,\lambda)$ for a track-labeled graph with underlying graph $H$ and track-labeling $\lambda$.
Intuitively, $\lambda$ places the vertices on a single ordered track with two extremes, and \betweenness\ imposes the 
monotonicity constraint that reflects the interval-like nature of the model.

\begin{figure}
	\centering
	\begin{tikzpicture}[scale=0.8]
		
		\tikzset{
			vertex/.style={
				circle,
				draw,
				thick,
				minimum size=8.5mm,
				inner sep=0pt,
				font=\small
			},
			extreme/.style={
				vertex,
				double,
				double distance=1pt
			},
			every label/.style={font=\small}
		}
		
		\node[extreme,label=above:$x$] (x) at (-2.2,2.2) {$-\infty$};
		\node[extreme,label=below:$y$] (y) at (-2.2,0.0) {$-\infty$};
		
		\node[vertex,label=above:$a$] (a) at (0.0,2.2) {$2$};
		\node[vertex,label=below:$b$] (b) at (0.4,0.0) {$4$};
		
		\node[vertex,label=above:$c$] (c) at (2.2,2.2) {$6$};
		\node[vertex,label=below:$d$] (d) at (2.6,0.0) {$6$};
		
		\node[vertex,label=above:$f$] (f) at (5.0,2.2) {$8$};
		\node[vertex,label=below:$e$] (e) at (5.4,0.0) {$8$};
		
		\node[vertex,label=above:$h$] (h) at (7.8,2.2) {$10$};
		\node[vertex,label=below:$g$] (g) at (8.2,0.0) {$10$};
		
		\node[extreme,label=right:$z$] (z) at (10.2,1.1) {$+\infty$};
		
		\draw[thick]
		(x)--(a)
		(y)--(a)
		(y)--(b)
		(a)--(b)
		(a)--(c)
		(b)--(c)
		(c)--(d)
		(d)--(e)
		(d)--(f)
		(e)--(f)
		(e)--(g)
		(f)--(g)
		(f)--(h)
		(g)--(h)
		(g)--(z)
		(h)--(z);
		
	\end{tikzpicture}
	\caption{An example of a track-labeled graph.  The value displayed inside each vertex is its track-label.}
	\label{fig:running-example-graph}
\end{figure}

Note that the track-labeling axioms are independent; see Figure~\ref{fig:independent}.

\begin{figure}
\begin{tikzpicture}[
    v/.style={circle, draw, minimum size=6.5mm, inner sep=0.8pt, font=\scriptsize},
    every node/.style={font=\small}
]

\def\labely{-1.9}

\begin{scope}[xshift=0cm]
    \node[v] (g1a) at (0,0.45) {$1$};
    \node[v] (g1b) at (0,-0.45) {$1$};
    \node at (0,\labely) {\clique};
\end{scope}

\begin{scope}[xshift=1.9cm]
    \node[v] (g2a) at (0,0.45) {$\scriptstyle +\infty$};
    \node[v] (g2b) at (0,-0.45) {$\scriptstyle -\infty$};
    \draw (g2a) -- (g2b);
    \node at (0,\labely) {\independence};
\end{scope}

\begin{scope}[xshift=4.0cm]
    \node[v] (g3a) at (0,0.9) {$\scriptstyle -\infty$};
    \node[v] (g3b) at (0,0) {$1$};
    \node[v] (g3c) at (0,-0.9) {$1$};
    \draw (g3a) -- (g3b) -- (g3c);
    \node at (0,\labely) {\path};
\end{scope}

\begin{scope}[xshift=6.0cm]
    \node[v] (g4a) at (-.5,0.7) {$1$};
    \node[v] (g4b) at (1,0.7) {$3$};
    \node[v] (g4c) at (0.25,-.5) {$2$};
    \draw (g4a) -- (g4b) -- (g4c);
    \node at (0,\labely) {\betweenness};
\end{scope}

\begin{scope}[xshift=8.5cm]
    \node[v] (g5c) at (0,0) {$2$};
    \node[v] (g5a) at (0,1.0) {$1$};
    \node[v] (g5b) at (-0.9,-0.8) {$2$};
    \node[v] (g5d) at (0.9,-0.8) {$3$};

    \draw (g5c) -- (g5a);
    \draw (g5c) -- (g5b);
    \draw (g5c) -- (g5d);

    \node at (0,\labely) {\claw};
\end{scope}

\end{tikzpicture}
\caption{Independence of the track-axioms. For each axiom, the displayed labeled graph satisfies all of the other track-axioms but violates the axiom named beneath it.}
\label{fig:independent}
\end{figure}

Observe that the result of applying a rational translation or a nonzero rational scaling to
a track-labeling is again a track-labeling.
Scaling by a negative rational reverses the order; in
particular, scaling by $-1$ is a \emph{reflection}.

Our immediate goal is to prove that any gap-labeling of a connected Hilbert fragment induced 
by a connected canonical representation satisfies the track-axioms.  
Proposition~\ref{prop:axioms_satisfied} then implies that every connected Hilbert fragment admits a track-labeling.  
The next lemmas establish the structural properties of gap-labeled Hilbert fragments needed for this proof.

\begin{lemma}
\label{lem:simplicial}
Every extreme-labeled vertex of a gap-labeled Hilbert fragment is a simplicial\footnote{A vertex is simplicial if its neighborhood induces a clique.} vertex. 
\end{lemma}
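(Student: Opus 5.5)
The plan is to use Lemma~\ref{lem:Hilbertset} to show that an extreme-labeled vertex corresponds to a single canonical interval, and then to show that all Hilbert sets meeting this interval pairwise intersect.

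\textbf{Setup.} Let $(H,\eta)$ be realized by $(P,\varphi)$ with the chosen decompositions. Suppose $\eta(v)=-\infty$; the case $+\infty$ follows by reflection (reverse the interval order). Then $S_v=G[-\infty,0]\cup Z_v$. The gap set is empty, so $Z_v\subseteq R[-\infty,0]$. By Lemma~\ref{lem:Hilbertset}, $S_v=\{I_x^*\}$ for a single canonical interval with left endpoint $0$. A neighbor $u$ of $v$ is then exactly a vertex whose Hilbert set $S_u$ contains $I_x^*$. Write $I_x^*=[0,j]$.

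\textbf{Key step.} Take two neighbors $u,w$ of $v$; we must show $S_u\cap S_w\neq\varnothing$. Each of $S_u, S_w$ is $G[I]\cup Z$ and contains $I_x^*$. Consider the three ways $I_x^*$ can lie in such a set:
\begin{itemize}
\item[(i)] $I_x^*\in G[I]$, so the gap $I$ lies inside $[0,j]$;
\item[(ii)] $I_x^*\in L[I]$, so $I=[j,j+1]$ or the right extreme gap;
\item[(iii)] $I_x^*\in R[I]$, so $I$ is the left extreme gap.
\end{itemize}
In case (iii), Lemma~\ref{lem:Hilbertset} forces the set to be exactly $\{I_x^*\}=S_v$. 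This contradicts injectivity of $\varphi$ (as $u\neq v$), so it does not occur for $u$ or $w$.

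In case (ii) with the right extreme gap, Lemma~\ref{lem:Hilbertset} again forces the set to be $\{I_x^*\}$, also excluded. In case (ii) with a bounded gap $[j,j+1]$, the gap set $G[j,j+1]$ is nonempty if the representation is connected; otherwise irreducibility limits its size. In any event the set still contains $I_x^*$.

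\textbf{The main obstacle.} Both $S_u$ and $S_w$ contain $I_x^*$, so $S_u\cap S_w\ni I_x^*$ automatically. Thus $N(v)$ is a clique trivially, once we know $S_v=\{I_x^*\}$: every neighbor shares the one interval $I_x^*$. The only real content is therefore the reduction $|S_v|=1$ via Lemma~\ref{lem:Hilbertset}. The delicate points are checking that the extreme gap set is defined to be empty, and that the chosen decomposition of $S_v$ is indeed at an extreme gap, which holds by the definition of $\eta$. I expect no further difficulty, so the case analysis above can be dropped in the final write-up.
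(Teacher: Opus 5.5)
Your proof is correct and is essentially the paper's argument: Lemma~\ref{lem:Hilbertset} reduces $S_v$ to a single interval, every neighbor's Hilbert set must contain that interval, and so any two neighbors intersect and are adjacent. As you yourself note, the case analysis (i)--(iii) is unnecessary and can be dropped, along with the loosely worded remark about connectedness in case (ii).
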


\begin{proof} Let $H=(V,E)$ be a gap-labeled Hilbert fragment.
Let $u,v\in V$ be distinct neighbors of an extreme-labeled vertex $z\in V$. Because $z$ is extreme-labeled, its gap set is empty, and by Lemma~\ref{lem:Hilbertset}, its borderline set contains a single interval $J$. Since $u$ and $v$ are neighbors of $z$, their Hilbert sets contain $J$, and therefore they are adjacent.
\end{proof}

\begin{lemma}\label{lem:Twintriple}
Suppose that $(H,\eta)$ is a gap-labeled Hilbert fragment.
Let $(x,u,v)$ be an induced 3-path in $H$, and assume $\eta(u)=\eta(v)=c$ is finite.\footnote{\ These labeled 3-paths, called \emph{twin triples} in Section~\ref{sec:digraph}, play a fundamental role in the proof of the representation theorem (Theorem~\ref{thm:representation}).}
If $J$ is an interval in $S_x\cap S_u$, then $J\in Z_u$. Furthermore,
\begin{enumerate}
\item[--] if $\eta(x)<c$ then  $J$ is a left-borderline interval  of $S_u$,
\item[--] if $\eta(x)>c$ then $J$ is a right-borderline interval  of $S_u$.
\end{enumerate} 
\end{lemma}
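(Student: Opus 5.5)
The plan is to use that $u$ and $v$ share a gap, so they have the same gap set, and then to locate $J$ by comparing endpoints. Since $\eta(u)=\eta(v)=c$ is finite, both decompositions use the same bounded gap $[c-1,c]$ of $\mathcal C(P)$. Hence
$$
G_u=G[c-1,c]=G_v .
$$

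\emph{First claim.} Because $(x,u,v)$ is an induced path, $x$ is adjacent to $u$ but not to $v$, so $S_x\cap S_v=\varnothing$. Take $J\in S_x\cap S_u$ and suppose $J\in G_u$. Then $J\in G_v\subseteq S_v$, so $J\in S_x\cap S_v$, which is a contradiction. Therefore $J\in Z_u$.

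\emph{Second claim, case $\eta(x)<c$.} By the first claim, $Z_u$ is nonempty. It lies entirely in $L[c-1,c]$ or entirely in $R[c-1,c]$, according to the fixed side of the decomposition. Suppose, for contradiction, that $J$ is right-borderline, so $J=[\ell_J,r_J]$ with $\ell_J=c$. We show that $J$ cannot lie in $S_x=G_x\cup Z_x$; in every subcase some endpoint of $J$ is forced to be strictly less than $c$.
\begin{enumerate}
\item[--] Suppose $\eta(x)=a$ is finite, with $a<c$, so the gap of $x$ is $[a-1,a]$. If $J\in G_x$, then $J$ contains the gap $[a-1,a]$, so $\ell_J\le a-1<c$. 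If $J\in L[a-1,a]$, then $r_J=a-1<c\le r_J$. If $J\in R[a-1,a]$, then $\ell_J=a<c$.
\item[--] Suppose $\eta(x)=-\infty$. Then $G_x=\varnothing$ and $Z_x\subseteq R[-\infty,0]$, so $\ell_J=0$. This is less than $c$, because finite labels are at least $1$.
\end{enumerate}
Each subcase contradicts $\ell_J=c$, so $J$ is a left-borderline interval of $S_u$.

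\emph{Case $\eta(x)>c$.} This is symmetric: reflect the representation, or repeat the argument with right endpoints. Now the assumption that $J$ is left-borderline gives $r_J=c-1$. Every interval of $S_x$ has right endpoint at least $\eta(x)-1\ge c$, or equal to $m-1\ge c$ when $\eta(x)=+\infty$, which is a contradiction.

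No step is a real obstacle. The only care needed is in the bookkeeping of the extreme gaps, using the conventions $G=\varnothing$, $R[-\infty,0]=\{I:\ell=0\}$ and $L[m-1,+\infty]=\{I:r=m-1\}$.
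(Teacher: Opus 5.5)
Your proof is correct and follows the paper's argument. The equality $G_u=G_v$ forces $J\in Z_u$, and the side of $J$ is settled by the same endpoint comparison; you simply carry out that comparison more explicitly, splitting into the subcases $J\in G_x$, left-borderline, right-borderline, and extreme gap, where the paper says only that all relevant endpoints of $S_x$ lie strictly on one side of $c$.
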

\begin{proof}
Note that $G_u=G_v$, because $\eta(u)=\eta(v)$.
If $J\in G_u$, then $J\in G_v$, so $J\in S_v$, and hence $J\in S_x\cap S_v$, contradicting that $x$ and $v$ are not adjacent.
Thus $J\notin G_u$, and therefore $J\in Z_u$.

Since $\eta(u)=c$ and $J\in Z_u$, the interval $J$ is either left-borderline
or right-borderline at the gap $[c-1,c]$.  Suppose $\eta(x)<c$.  Since
$J\in S_x$, the interval $J$ must either contain the gap indexed by $\eta(x)$
or lie in the chosen borderline set of $S_x$.  If $J$ began at $c$, then it
could do neither, because all endpoints relevant to the decomposition of
$S_x$ lie strictly to the left of $c$.  Hence $J$ must end at $c-1$, so $J$ is
left-borderline.  The case $\eta(x)>c$ is symmetric.
\end{proof}

We need the following observation. 

\begin{lemma}\label{lem:connected-realization}
	Every connected Hilbert fragment admits a realization by a connected canonical
	interval representation.
\end{lemma}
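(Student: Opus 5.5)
Start from an arbitrary realization $(P,\varphi)$ of the connected fragment $H$, with canonical representation $\mathcal C(P)$ on endpoints $\{0,\ldots,m-1\}$. If $\mathcal C(P)$ is not connected, some bounded gap $[i-1,i]$ has $G[i-1,i]=\varnothing$. Each such \emph{cut gap} splits $\mathcal C(P)$ into a left part (intervals with right endpoint $\le i-1$) and a right part (intervals with left endpoint $\ge i$). Every canonical interval lies entirely on one side. The plan is to show that all selected Hilbert sets $\varphi(v)$ lie within one ``block'' between consecutive cut gaps. We then restrict $P$ to that block and check that the selected sets remain Hilbert sets of the restricted order, whose canonical representation is connected.

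\textbf{Step 1 (each Hilbert set lives in one block).} Let $S=G[I]\cup Z$ be a Hilbert set. If $G[I]\ne\varnothing$, every interval of $G[I]$ contains the gap $I$. Hence they all lie in the block containing $I$; in particular $I$ is not a cut gap. The intervals of $Z$ have an endpoint on the boundary of $I$, which is not separated from $I$ by any cut gap, so they lie in the same block. If $G[I]=\varnothing$, Lemma~\ref{lem:Hilbertset} gives $|S|=1$, so $S$ trivially lies in one block.

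\textbf{Step 2 (all of $\varphi(V)$ lies in one block).} Adjacent vertices have intersecting Hilbert sets, and every canonical interval belongs to a unique block. Hence adjacent vertices are assigned the same block, and connectivity of $H$ forces a single block $B$ for all $v\in V$.

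\textbf{Step 3 (restriction).} Let $\mathcal C_B$ be the intervals of block $B$, with endpoints $\{a,\ldots,b\}$, and let $P_B$ be the subposet they represent. Within $\{a,\ldots,b\}$, every endpoint still occurs as both a left and a right endpoint of intervals of $\mathcal C_B$. This holds because an interval with a given endpoint $p\in[a,b]$ cannot cross the bounding cut gaps, so it lies in $B$. By Proposition~\ref{prop:canonical}, $\mathcal C_B$ (translated) is the canonical representation of $P_B$. It is connected: a bounded gap inside $[a,b]$ with an empty gap set would be a cut gap of $\mathcal C(P)$ strictly inside $B$, contradicting the definition of a block.

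\textbf{Step 4 (Hilbert sets are preserved).} This is the main point to check. We must show that each $\varphi(v)$ is an irreducible fundamental extender with respect to $\mathcal C_B$.
\begin{itemize}
\item \emph{Fundamental extender.} Gap sets and borderline sets of gaps inside $B$ are the same computed in $\mathcal C_B$ or in $\mathcal C(P)$, restricted to $B$. The extreme gaps of $\mathcal C_B$ correspond to the cut gaps bounding $B$ (or the true extremes). Singletons in $L$ or $R$ of those gaps remain singleton extenders.
\item \emph{Irreducibility.} Any fundamental extender of $\mathcal C_B$ is also a fundamental extender of $\mathcal C(P)$. Indeed, a gap of $\mathcal C_B$ corresponds to a gap of $\mathcal C(P)$ with the same gap set and borderline sets restricted to $B$. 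For the boundary gaps, a singleton $\{J\}$ is a fundamental extender of $\mathcal C(P)$ at the cut gap (empty gap set, $J$ borderline). So a partition witnessing reducibility in $\mathcal C_B$ would also witness it in $\mathcal C(P)$, contradicting that $\varphi(v)$ is a Hilbert set.
\end{itemize}

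Since intersections among the sets $\varphi(v)$ are unchanged, $(P_B,\varphi)$ realizes $H$ with a connected canonical representation. The only delicate point is the correspondence of the extreme and cut gaps in Step 4, which is handled by Lemma~\ref{lem:Hilbertset}.
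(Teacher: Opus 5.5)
Your proof is correct, and Steps~1, 2 and~4 match the paper's argument. The paper also splits $\mathcal C(P)$ into blocks at the empty bounded gaps and notes that no Hilbert set meets two blocks; it phrases this as ``the union of the intervals of a fundamental extender is an interval,'' where you use Lemma~\ref{lem:Hilbertset} for the empty-gap case. It then uses connectivity of $H$ to place all $\varphi(v)$ in one block, and transfers fundamental-extender status and irreducibility to the block exactly as you do.

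The difference is the final step. The paper never checks that the block is canonical: it augments the block $\mathcal F$ and invokes Lemma~\ref{lem:augmentation}, which applies to any rational representation. You instead observe that every point of $\{a,\dots,b\}$ is still both a left and a right endpoint within the block. By Proposition~\ref{prop:canonical}, the block is then already the canonical representation of the subposet $P_B$, so irreducibility with respect to $\mathcal C_B$ directly makes each $\varphi(v)$ a Hilbert set of $P_B$.

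Your route is more elementary and avoids the augmentation lemma entirely. It also yields an underlying order that is a subposet of the original $P$, with no added singleton elements. The paper's route skips the canonicity check and reuses a tool it needs anyway for Theorem~\ref{thm:representation}.

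One small tightening in Step~4: at the boundary gaps you should say that every subfamily of the borderline set, not only a singleton, is a fundamental extender of $\mathcal C(P)$ at the bounding cut gap. A putative reducing partition in $\mathcal C_B$ may have non-singleton parts there. Your general correspondence of gaps (same gap sets and same borderline sets) already gives this.
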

\begin{proof}
See Appendix~\ref{app:proofC}.
\end{proof}

\begin{proposition}
\label{prop:axioms_satisfied}
If $H$ is a connected Hilbert fragment, then $H$ admits a track-labeling.
\end{proposition}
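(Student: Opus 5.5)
We take a realization $(P,\varphi)$ of $H$ by a connected canonical representation $\mathcal C(P)$, as provided by Lemma~\ref{lem:connected-realization}. We fix gap--borderline decompositions $S_v=G_v\cup Z_v$ and show that the resulting gap-labeling $\eta$ satisfies the five track-axioms. Connectedness of $\mathcal C(P)$ gives the key extra fact: $G_v\neq\varnothing$ whenever $\eta(v)$ is finite. When $\eta(v)$ is extreme, $|S_v|=1$ by Lemma~\ref{lem:Hilbertset}.

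The first four axioms are essentially direct. For \clique, equal finite labels give equal nonempty gap sets, so the corresponding Hilbert sets meet.

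For \independence, suppose two extreme-labeled vertices are adjacent. Their singleton Hilbert sets then share their unique interval, so they are equal, contradicting injectivity of $\varphi$.

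For \path, let $(x,u,v)$ be an induced path with $\eta(u)=\eta(v)=c$, and suppose $\eta(x)=-\infty$ (the case $+\infty$ follows by reflection). Then $S_x=\{J\}$ with $J$ starting at $0$. By Lemma~\ref{lem:Twintriple}, $J\in Z_u$, and $J$ ends at $c-1$. Now $\{J\}$ is itself a fundamental extender at the extreme gap. Also $S_u\setminus\{J\}=G_u\cup(Z_u\setminus\{J\})$ is a fundamental extender, nonempty because $G_u\neq\varnothing$. This partitions $S_u$ nontrivially, contradicting irreducibility.

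For \betweenness, take $J\in S_a\cap S_c$ with $\eta(a)<\eta(b)<\eta(c)$. Since $J\in S_a$, it contains the gap of $a$ or is borderline there, so its left endpoint is at most $\eta(a)\le\eta(b)-1$. Since $J\in S_c$, its right endpoint is at least $\eta(c)-1\ge\eta(b)$. The same holds when a label is extreme, using the convention that the borderline intervals of the extreme gaps start at $0$ or end at $m-1$. Hence $J$ contains the gap $[\eta(b)-1,\eta(b)]$, so $J\in G_b\subseteq S_b$, and $b$ is adjacent to both $a$ and $c$.

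The main obstacle is \claw. Let $w$ be the center and $p,q,r$ the pairwise nonadjacent leaves, with witnesses $J_p,J_q,J_r\in S_w$. These witnesses are distinct, since the leaves' Hilbert sets are pairwise disjoint. First, $\eta(w)$ is finite: otherwise $|S_w|=1$ and all three leaves would share that interval. Let $g=[\eta(w)-1,\eta(w)]$. We partition the leaves by whether their label is below, equal to, or above $\eta(w)$, with extremes counted below and above respectively.

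A leaf with label equal to $\eta(w)$ has the same gap set, hence is adjacent to any leaf $y$ whose witness lies in $G_w$. If that witness instead lies in $Z_w$, the twin-triple reasoning of Lemma~\ref{lem:Twintriple} applies to the induced paths through $w$.

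Next we show that two leaves on the same side, both with finite labels, must be adjacent. Take $\eta(p),\eta(q)<\eta(w)$. Each of $J_p$ and $J_q$ reaches from at most its leaf's label to at least $\eta(w)-1$. Therefore the intervals of $G$ at the larger of the two leaf gaps, which is nonempty, or one of $J_p,J_q$ itself, lies in both $S_p$ and $S_q$. This is a betweenness-type interval-covering argument, and it must be checked carefully against the borderline choices $Z_p,Z_q$. It yields: at most one finite-labeled leaf on each side.

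A pigeonhole argument over the three leaves then forces two leaves on the same side with extreme labels. These labels coincide, both $-\infty$ or both $+\infty$, because a leaf on the left side can only carry $-\infty$ and a leaf on the right side only $+\infty$. That is exactly what \claw\ requires. I expect the delicate part to be the same-side adjacency step when witnesses are borderline rather than gap intervals. I would handle it by case analysis on whether $J_p,J_q$ lie in $G_w$ or $Z_w$, using irreducibility as in the \path\ argument to exclude the bad configurations.
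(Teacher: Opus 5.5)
Your verification of the first four axioms is correct and follows the paper's proof. The gap is in \claw: the final pigeonhole step does not follow from what you actually establish. Your same-side lemma (which explicitly assumes both leaves are finite) gives at most one finite-labeled leaf strictly below $\eta(w)$ and at most one strictly above. \clique\ adds that at most one leaf has label $\eta(w)$. These facts still allow configurations such as $\eta(p)<\eta(q)=\eta(w)<\eta(s)$ with all labels finite. They also allow $\eta(r)=-\infty<\eta(p)<\eta(w)$ together with a third leaf above $\eta(w)$ or at $\eta(w)$. In none of these configurations do two leaves share an extreme label, so the pigeonhole conclusion fails. You remark that ``the twin-triple reasoning applies'' to a leaf labeled $\eta(w)$, but you never state a conclusion from it or use one in the count.

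Both holes close with tools you already have, and this is essentially the paper's argument. First, the same-side step needs no interval case analysis or irreducibility. Suppose two leaves lie strictly on the same side with distinct labels, say $\eta(r)<\eta(p)<\eta(w)$, where $\eta(r)=-\infty$ is allowed. Then \betweenness, which you have already verified including extreme endpoints, applied to the edge $rw$ gives $rp\in E$. Equal finite labels are excluded by \clique. Hence two leaves on the same strict side must carry the same extreme label. Second, suppose no two leaves are on the same strict side. Since at most one leaf has label $\eta(w)$, the leaves must be $p,q,s$ with $\eta(p)<\eta(q)=\eta(w)<\eta(s)$. Lemma~\ref{lem:Twintriple} applied to the induced paths $(p,w,q)$ and $(s,w,q)$ places a left-borderline interval and a right-borderline interval in $Z_w$. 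This contradicts the one-sidedness of $Z_w$.

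The paper phrases this as follows. It assumes the three leaf labels are distinct and uses \betweenness\ to force the middle label to equal $\eta(w)$. It then derives the contradiction above, and finally notes that two equal leaf labels cannot be finite. So the step you flagged as delicate is immediate from axioms already proved. The essential ingredient, one-sidedness of $Z_w$ against leaves on opposite sides of a leaf labeled $\eta(w)$, is the one your proposal leaves unstated.
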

\begin{proof}
Let $H=(V,E)$ be a connected Hilbert fragment.
By Lemma~\ref{lem:connected-realization}, $H$ has a connected canonical interval representation.
Let $\lambda = \eta$ be a gap-labeling of $H$ determined by this representation.
We verify that $\lambda$ satisfies the track-axioms.

\begin{description}
\item[\clique]
Suppose $\lambda(x)$, $\lambda(y)$ are finite, and $\lambda(x)=\lambda(y)$. 
The Hilbert sets $S_x$ and $S_y$ have the same nonempty gap set; therefore $x$ and $y$ are adjacent. 

\item[\independence]
If $\lambda(x)$ and $\lambda(y)$ are extreme labels, then their Hilbert sets have empty gap sets.
By Lemma~\ref{lem:Hilbertset}, each of the Hilbert sets consists of a single (distinct) interval. Hence the Hilbert sets are disjoint, so $x$ and $y$ are not adjacent.

\item[\path]
Let $(x,u,v)$ be an induced path with $\lambda(u)=\lambda(v)$ finite.
Assume for contradiction that $\lambda(x)$ is extreme. Then $S_x=Z_x=\{I\}$ by Lemma~\ref{lem:Hilbertset}.
Since $xu\in E$, we have $I\in S_x\cap S_u$, and Lemma~\ref{lem:Twintriple} yields $I\in Z_u$.
As $\lambda(u)$ is finite, $G_u\neq\varnothing$, so $S_u\setminus\{I\}\neq\varnothing$.
Hence $S_u$ can be decomposed as the union of $\{I\}$ and $S_u\setminus\{I\}$,
contradicting that $S_u$ is a Hilbert set.

\item[\betweenness]
Let $\lambda(a)<\lambda(b)<\lambda(c)$ and assume $ac\in E$.
Pick $I\in S_a\cap S_c$.
Since $\lambda(b)$ lies strictly between $\lambda(a)$ and $\lambda(c)$, the interval $I$ meets every intermediate gap, in particular
$I\in G_b\subseteq S_b$.
Therefore $S_a\cap S_b\neq\varnothing$ and $S_b\cap S_c\neq\varnothing$, so $ab,bc\in E$.

\item[\claw]
Let  $(v_0;v_1, v_2, v_3)$ be a claw with center $v_0$. By Lemma~\ref{lem:simplicial}, $v_0$  is not a simplicial vertex, so $\lambda(v_0)=\ $ $\lambda_0$ is finite. 
We claim that 
two leaf labels are equal. 

Assume, for a contradiction, that  $\lambda(v_1)<\lambda(v_2)<\lambda(v_3)$.   If 
$\lambda(v_2)>\ $ {$\lambda_0$}, then $v_0$, $v_2$, $v_3$ form a triple violating \betweenness. If $\lambda(v_2)<\ $ $\lambda_0$, then $v_1$, $v_2$, $v_0$ violate \betweenness. Hence $\lambda(v_0)=\lambda(v_2)=\ $ {$\lambda_0$}.

Now apply Lemma~\ref{lem:Twintriple} to the induced paths 
$(v_1,v_0,v_2)$ and  $(v_3,v_0,v_2)$. Since $\lambda(v_1)<\ $ $\lambda_0$, any $J_1\in S_{v_1}\cap S_{v_0}$ is a left-borderline interval in $Z_{v_0}$;
since $\lambda(v_3)>\ $ $\lambda_0$, any $J_3\in S_{v_3}\cap S_{v_0}$ is a right-borderline interval in $Z_{v_0}$. 
By the definition of a Hilbert set, $Z_{v_0}$ cannot contain borderline intervals from distinct sides of the gap  $[\lambda_0-1,\lambda_0]$, a contradiction. Therefore, among the leaf  labels   some two are equal, say
$\lambda(v_1)=\lambda(v_2)$. 

If their common label were finite, then $G_{v_1}=G_{v_2}$ would be a nonempty
gap set, forcing $S_{v_1}\cap S_{v_2}\ne\varnothing$, contrary to
$v_1v_2\notin E$.  Hence their common label is extreme.
\end{description}
\end{proof}

The results of Sections~\ref{sec:bi-fork-free} and~\ref{sec:eliminate} prove the converse of Proposition~\ref{prop:axioms_satisfied}, 
completing the verification that the track-axioms characterize connected Hilbert fragments.

Note that the connectedness hypothesis in Proposition~\ref{prop:axioms_satisfied} is essential. Consider the disjoint union of two H-graphs 
(two $P_3$'s  whose middle vertices are adjacent), and an edge. 
It is an easy exercise to find a track-labeling for each component. 
The results of this paper therefore show that both components are Hilbert fragments, 
and then Lemma~\ref{lem:disjointunion} implies that their union $G$ is a Hilbert fragment. Yet one can show that there is no track-labeling for $G$.

\section{Twin-arcs digraph and bi-fork-free track-labelings}
\label{sec:digraph}

This section introduces an auxiliary digraph and several related notions that are
used in the proof of Theorem~\ref{thm:characterization} in Section~\ref{sec:eliminate}.

Let $H=(V,E)$ be a track-labeled graph with a track-labeling $\lambda$.
An induced path $(u,w,v)$ is a \emph{twin triple} if
$$
\lambda(u)\neq \lambda(w)=\lambda(v).
$$
To simplify notation, we use $\beta$ for $\lambda(w)$ (equal to $\lambda(v)$). Similarly, to emphasize cases, we use $\alpha$ for $\lambda(u)$ when $\lambda(u)<\beta$, and $\gamma$ when $\lambda(u)>\beta$. Accordingly, a twin triple has one of the two label patterns
$$
(\alpha,\beta,\beta)\quad\text{with }\alpha<\beta,
\qquad\text{or}\qquad
(\gamma,\beta,\beta)\quad\text{with }\gamma>\beta.
$$

Note that $\alpha$, $\beta$, and $\gamma$ are all finite. Indeed, $\beta$ is finite by \independence, whereas $\alpha$ and $\gamma$ are finite due to \path.

Using twin triples, we now define an arc-colored digraph on $V$ associated with $H$, called the \emph{twin-arcs digraph}. A colored arc from $w$ to $v$ is defined for each twin triple $(u,w,v)$ with $\beta=\lambda(w)=\lambda(v)$; it is colored red if $\beta<\lambda(u)$, and blue if $\beta>\lambda(u)$. This coloring agrees with the spectral metaphor: high $\beta$ indicates blue light, low $\beta$ indicates red light. If two arcs have the same endpoints, same direction, and same color, then only one copy is included. 

The twin triples of the track-labeled graph in Figure~\ref{fig:running-example-graph} are
\begin{align*}
	\beta=6:&\quad
	(a,c,d),\ (b,c,d),\ (e,d,c),\ (f,d,c),\\
	\beta=8:&\quad
	(h,f,e),\\
	\beta=10:&\quad
	(e,g,h).
\end{align*}
Equivalently, the nontrivial twin-arcs are the blue arc $c\to d$, the red arc
$d\to c$, the red arc $f\to e$, and the blue arc $g\to h$.  Figure~\ref{fig:running-example-twin-arcs} shows the
twin-arcs digraph of the track-labeled graph in Figure~\ref{fig:running-example-graph}.

\begin{figure} [ht]
	\centering
	\begin{tikzpicture}[scale=0.8,>=Stealth]
		
		\tikzset{
			vertex/.style={
				circle,
				draw,
				thick,
				minimum size=8.5mm,
				inner sep=0pt,
				font=\small
			},
			extreme/.style={
				vertex,
				double,
				double distance=1pt
			},
			every label/.style={font=\small}
		}
		
		\node[extreme,label=above:$x$] (x) at (-2.2,2.2) {$-\infty$};
		\node[extreme,label=below:$y$] (y) at (-2.2,0.0) {$-\infty$};
		
		\node[vertex,label=above:$a$] (a) at (0.0,2.2) {$2$};
		\node[vertex,label=below:$b$] (b) at (0.4,0.0) {$4$};
		
		\node[vertex,label=above:$c$] (c) at (2.2,2.2) {$6$};
		\node[vertex,label=below:$d$] (d) at (2.6,0.0) {$6$};
		
		\node[vertex,label=above:$f$] (f) at (5.0,2.2) {$8$};
		\node[vertex,label=below:$e$] (e) at (5.4,0.0) {$8$};
		
		\node[vertex,label=above:$h$] (h) at (7.8,2.2) {$10$};
		\node[vertex,label=below:$g$] (g) at (8.2,0.0) {$10$};
		
		\node[extreme,label=right:$z$] (z) at (10.2,1.1) {$+\infty$};
		
		\draw[cyan,thick,->] (c) to[bend left=18] (d);
		\draw[red,thick,->]  (d) to[bend left=18] (c);
		
		\draw[red,thick,->]  (f) -- (e);
		\draw[cyan,thick,->] (g) -- (h);
		
	\end{tikzpicture}
	\caption{The twin-arcs digraph of the track-labeled graph in Figure~\ref{fig:running-example-graph}. }
	\label{fig:running-example-twin-arcs}
\end{figure}

Figure~\ref{fig:configurations} displays the basic configurations in twin-arcs digraphs that are used later: bi-arrows, bi-cycles, bi-forks, and eye-to-eye bi-cycles. 
The notation $(x;y,z)$ for a bi-fork, and for the corresponding eye-to-eye bi-cycle, records the ordered roles of the vertices: 
$x$ is the center, $y$ is the blue eye, and $z$ is the red eye; equivalently, $xy$ is a blue arc and $xz$ is a red arc.

For a finite label $\beta$, the \emph{$\beta$-kernel digraph}, or simply the
\emph{$\beta$-kernel}, is the subdigraph $D(\beta)=(B,\mathcal A)$ of $D$
induced by the vertices labeled $\beta$ that occur in some twin triple;
equivalently, $B$ is the set of vertices labeled $\beta$ that are endpoints of
twin-arcs.  By \clique, the set $B$ induces a clique in $H$.

\begin{figure}[ht]
	\centering
	\captionsetup[subfigure]{labelformat=empty,justification=centering,singlelinecheck=false}
	
	\begin{subfigure}[t]{0.1\textwidth}
		\centering
		\makebox[\linewidth][c]{%
			\begin{tikzpicture}[scale=.65,>=Stealth, node distance=15mm,
				v/.style={circle,draw,inner sep=1pt,minimum size=10pt}]
				\node[v,label=below:$u$] (u) at (0,0){$\beta$};
				\node[v,label=above:$v$] (v)at (0,2.25) {$\beta$};
				\draw[->,red,thick] (u)  to[bend left=15] (v);
				\draw[->,cyan,thick] (u)  to[bend right=15]  (v);
		\end{tikzpicture}}
		\caption{bi-arrow}
	\end{subfigure}\hskip2cm
	\begin{subfigure}[t]{0.1\textwidth}
		\centering
		\makebox[\linewidth][c]{%
			\begin{tikzpicture}[scale=.65,>=Stealth, node distance=15mm,
				v/.style={circle,draw,inner sep=1pt,minimum size=10pt}]
				\node[v,label=below:$u$] (u) at (0,0){$\beta$};
				\node[v,label=above:$v$] (v)at (0,2.25) {$\beta$};
				\draw[->,red,thick] (u)  to[bend left=15] (v);
				\draw[->,cyan,thick] (v)  to[bend left=15]  (u);
		\end{tikzpicture}}
		\caption{bi-cycle}
	\end{subfigure}\hskip2cm
	\begin{subfigure}[t]{0.21\textwidth}
		\centering
		\makebox[\linewidth][c]{%
			\begin{tikzpicture}[scale=.65,>=Stealth, node distance=11mm,
				v/.style={circle,draw,inner sep=1pt,minimum size=10pt}]
				\node[v,label=above:$x$] (x) {$\beta$};
				\node[v,below right=of x,label=below:$y$] (y) {$\beta$};
				\node[v,below left=of x,label=below:$z$] (z) {$\beta$};
				\draw[->,cyan,thick] (x) -- (y);
				\draw[->,red, thick] (x) -- (z);
		\end{tikzpicture}}
		\caption{\shortstack[c]{bi-fork\\$(x;y,z)$}}
	\end{subfigure}

	\caption{The bi-arrow is  a forbidden configuration; the others are  used in the analysis of
twin-arcs digraphs. }
	\label{fig:configurations}
\end{figure}

\begin{lemma}\label{lem:kernel}
	For all $\beta$, $D(\beta)$ satisfies:
	\begin{enumerate}[label=(\alph*),ref=\alph*]
		\item\label{lem:kernel:no-biarrow} there is no bi-arrow;
		\item\label{lem:kernel:bi-fork-eye} every $(x;y,z)$ bi-fork extends to an $(x;y,z)$ eye-to-eye bi-cycle.
	\end{enumerate}
\end{lemma}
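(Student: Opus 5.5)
The plan is to argue directly from the track-axioms. Both parts come from one mechanism. Two twin triples sharing a center give that center two neighbours with labels on opposite sides of $\beta$. \betweenness\ forbids those two neighbours from being adjacent, and then \claw\ forces the remaining adjacencies. Throughout, every label involved is finite: by \independence\ and \path, as noted after the definition of twin triples.

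For part (\ref{lem:kernel:no-biarrow}), I will suppose that $u\to v$ is both a red arc and a blue arc in $D(\beta)$. Then there are twin triples $(a,u,v)$ and $(c,u,v)$ with $\lambda(a)<\beta<\lambda(c)$, so $a,c$ are adjacent to $u$ and not to $v$.
\begin{enumerate}
\item If $ac\in E$, then $\lambda(a)<\lambda(v)<\lambda(c)$ and \betweenness\ gives $av\in E$, which is a contradiction. Hence $ac\notin E$.
\item It follows that $(u;a,c,v)$ is an induced claw.
\item Its leaf labels $\lambda(a)<\beta<\lambda(c)$ are pairwise distinct and finite. Hence no two leaves share an extreme label, which contradicts \claw.
\end{enumerate}

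For part (\ref{lem:kernel:bi-fork-eye}), I will take a bi-fork $(x;y,z)$. The blue arc $x\to y$ comes from a twin triple $(u,x,y)$ with $\lambda(u)<\beta$. The red arc $x\to z$ comes from a twin triple $(w,x,z)$ with $\lambda(w)>\beta$. So $ux,wx\in E$ and $uy,wz\notin E$, and $xy,xz,yz\in E$ by \clique.
\begin{enumerate}
\item As in part (a), \betweenness\ applied to $\lambda(u)<\lambda(y)<\lambda(w)$ shows $uw\notin E$, since otherwise $uy\in E$.
\item Suppose $wy\notin E$. Then $(x;u,w,y)$ is an induced claw whose leaf labels $\lambda(u)<\beta<\lambda(w)$ are distinct and finite, contradicting \claw. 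Hence $wy\in E$.
\item Symmetrically, using the claw $(x;u,w,z)$, we get $uz\in E$.
\item Now $(w,y,z)$ is an induced path with $\lambda(w)>\beta=\lambda(y)=\lambda(z)$. So it is a twin triple and gives a red arc $y\to z$.
\item Likewise $(u,z,y)$ is a twin triple with $\lambda(u)<\beta$, giving a blue arc $z\to y$.
\end{enumerate}
Together with the arcs $x\to y$ (blue) and $x\to z$ (red), these form the eye-to-eye bi-cycle on $(x;y,z)$.

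The only delicate point is matching the colours and directions of the two new arcs to the intended definition of the eye-to-eye bi-cycle, which the figure omits. The arcs derived are red $y\to z$ and blue $z\to y$. The combinatorial content is entirely the two claw and \betweenness\ arguments above. I also assume that $y$ and $z$ are distinct, as a bi-fork requires.
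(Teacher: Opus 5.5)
Your proof is correct and is essentially the paper's. Part (a) is the same claw-plus-\betweenness\ contradiction. In part (b) you inline that argument through the claws $(x;u,w,y)$ and $(x;u,w,z)$, whereas the paper notes that $uz\notin E$ would make $(u,x,z)$ a twin triple and hence a bi-arrow $x\to z$, contradicting (a). Your resulting arcs, red $y\to z$ and blue $z\to y$, match the paper's eye-to-eye bi-cycle.
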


\begin{proof}
(a)
Assume, for a contradiction, that $D(\beta)$ contains a bi-arrow from $u$ to $v$.
By definition, $\lambda(u)=\lambda(v)=\beta$, and there exist twin triples
$(a,u,v)$ and $(c,u,v)$ such that $\lambda(a)<\beta$ and $\lambda(c)>\beta$.

Because all labels are finite in a twin triple, the vertices $u$, $v$, $a$, $c$ do not induce a claw. So 
$ac\in E$, and because $v$ is not adjacent to either $a$ or $c$, this contradicts \betweenness.

\begin{figure}[H]
\centering
	\begin{subfigure}[t]{0.21\textwidth}
		\centering
		\makebox[\linewidth][c]{%
			\begin{tikzpicture}[scale=.65,>=Stealth, node distance=11mm,
				v/.style={circle,draw,inner sep=1pt,minimum size=10pt}]
				\node[v,label=above:$x$] (x) {$\beta$};
				\node[v,below right=of x,label=below:$y$] (y) {$\beta$};
				\node[v,below left=of x,label=below:$z$] (z) {$\beta$};
			\node[v,above right=of x,label=above:$c$] (c) {$\gamma$};
			\node[v,above left=of x,label=above:$a$] (a) {$\alpha$};		
				\draw[->,cyan,thick] (x) -- (y);
				\draw[->,red, thick] (x) -- (z);
				\draw (z)--(a)--(x) (y)--(c)--(x);
\draw[red,thick,->]  (y) to[bend left=14] (z);
\draw[cyan,thick,->]  (z) to[bend left=14] (y);
		\end{tikzpicture}}
		
	\end{subfigure}

\caption{A bi-fork extends to an eye-to-eye bi-cycle.}
\label{fig:Catout}
\end{figure}
(b) 
Let $(x;y,z)$ be a bi-fork defined by twin triples $(a,x,y)$ and $(c,x,z)$ with
$\lambda(x)=\lambda(y)=\lambda(z)=\beta$, $\lambda(a)<\beta$, and $\lambda(c)>\beta$
(see  Figure~\ref{fig:Catout}).

By \clique, $y$ and $z$ are adjacent.  We next show that $az$ and $cy$ are
edges.  If $az\notin E$, then $(a,x,z)$ is a twin triple, producing a blue arc
from $x$ to $z$.  Together with the red arc from $x$ to $z$ coming from
$(c,x,z)$, this gives a bi-arrow, contradicting part (a).  Hence $az\in E$.
Similarly, if $cy\notin E$, then $(c,x,y)$ is a twin triple, producing a red
arc from $x$ to $y$; together with the blue arc from $x$ to $y$ coming from
$(a,x,y)$, this gives a bi-arrow.  Hence $cy\in E$.

Therefore $(a,z,y)$ and $(c,y,z)$ are twin triples, so the bi-fork extends to
the claimed eye-to-eye bi-cycle.
\end{proof}

A $\beta$-kernel is \emph{nontrivial} if it contains a bi-fork; otherwise, it is
\emph{trivial}.  By Lemma~\ref{lem:kernel}(\ref{lem:kernel:bi-fork-eye}), every
bi-fork in a $\beta$-kernel extends to the corresponding eye-to-eye bi-cycle.  A
track-labeled graph is \emph{bi-fork-free} if its twin-arcs digraph contains no
bi-fork; equivalently, $D(\beta)$ is trivial for every finite label $\beta$.  When
the underlying graph $H$ is clear, we also say that a track-labeling $\lambda$ of
$H$ is bi-fork-free if the track-labeled graph $(H,\lambda)$ is bi-fork-free.


Note that for a twin triple 
 $(u,w,v)$, a witness for the edge $uw$ is a borderline interval in $Z_w$, whose left or right position is determined by Lemma~\ref{lem:Twintriple}.

\begin{lemma}
\label{lem:bi-forkfree}
Every gap-labeling of a Hilbert fragment is bi-fork-free.
\end{lemma}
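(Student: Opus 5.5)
We argue by contradiction, using only the interval-level consequences already recorded in Lemma~\ref{lem:Twintriple}. Let $(H,\eta)$ be a gap-labeled Hilbert fragment realized by $(P,\varphi)$ with the chosen decompositions $S_v=G_v\cup Z_v$. Suppose the twin-arcs digraph contains a bi-fork $(x;y,z)$ at some finite label $\beta$. Then there are twin triples $(a,x,y)$ and $(c,x,z)$ with
\[
\eta(a)<\beta=\eta(x)=\eta(y)=\eta(z)<\eta(c).
\]

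First, the blue arc $x\to y$ comes from the induced path $(a,x,y)$. Since $ax\in E$, pick $J_a\in S_a\cap S_x$. Lemma~\ref{lem:Twintriple}, applied to this path with $\eta(a)<\beta$, gives $J_a\in Z_x$, and $J_a$ is a left-borderline interval at the gap $[\beta-1,\beta]$, that is, it ends at $\beta-1$.

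Second, the red arc $x\to z$ comes from the induced path $(c,x,z)$. Pick $J_c\in S_c\cap S_x$. The same lemma, now with $\eta(c)>\beta$, gives $J_c\in Z_x$ and shows that $J_c$ is right-borderline, that is, it begins at $\beta$.

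So $Z_x$ contains both a left-borderline and a right-borderline interval of the gap $[\beta-1,\beta]$. This is impossible: by definition of a fundamental extender, the chosen borderline part satisfies $Z_x\subseteq L[I_x]$ or $Z_x\subseteq R[I_x]$, and $L[I_x]\cap R[I_x]=\varnothing$ because $\beta-1\neq\beta$. This is the same contradiction used in the \claw\ case of Proposition~\ref{prop:axioms_satisfied}. Hence no bi-fork exists, and $D(\beta)$ is trivial for every finite $\beta$.

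The only point needing care is that both arcs are read off the same fixed decomposition of $S_x$, since the twin triples in a bi-fork share the center $x$. Lemma~\ref{lem:Twintriple} is stated relative to that chosen decomposition, so both borderline conclusions land in the same $Z_x$. No connectedness of the representation is required, so the statement holds for every gap-labeling, as claimed.
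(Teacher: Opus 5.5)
Your proof is correct and follows essentially the same approach as the paper's. The paper also takes the two differently colored twin-arcs leaving the center, uses Lemma~\ref{lem:Twintriple} to place the witnesses of the two edges into $Z_x$ on opposite sides of the gap $[\beta-1,\beta]$, and reaches the same contradiction with the one-sided gap--borderline form of a fundamental extender.
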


\begin{proof}
Assume that $(H,\eta)$ is a gap-labeled Hilbert fragment. 
Suppose, to the contrary, that there exists a vertex $w$ from which two twin-arcs of different
colors emanate in the twin-arcs digraph for $H$.  
This means there exist vertices $u,v,u',v'$ such that both
$$
(u,w,v)\quad\text{and}\quad (u',w,v')
$$
are twin triples, and $-\infty<\eta(u)<\eta(w)<\eta(u')<+\infty$.

According to Lemma~\ref{lem:Twintriple}, a witness for the edge $uw$ and a witness 
for the edge $u'w$ are borderline intervals in $Z_w$ from distinct sides, a contradiction.
\end{proof}

\section{Connected bi-fork-free track-labeled graphs}
\label{sec:bi-fork-free}
Ultimately, our goal is to prove that every connected graph that admits a track-labeling is a Hilbert fragment. Although Lemma~\ref{lem:bi-forkfree} shows that every gap-labeling is bi-fork-free, an arbitrary
track-labeled graph $H$ need not be bi-fork-free. Because bi-forks cause difficulties, we first consider bi-fork-free track-labeled graphs. We resolve the general case in Section~\ref{sec:eliminate} by eliminating bi-forks successively, adjusting the track-labeling appropriately.
 
\subsection*{Side-markers and witness intervals}
\label{sub:witness}
Let $H=(V,E)$ be a connected  track-labeled graph with a bi-fork-free labeling $\lambda$. 
The one-vertex graph is plainly a Hilbert fragment, so in the construction below
we assume that $H$ has at least two vertices.
By translation and scaling, we may assume that all finite labels are positive integers  
and \emph{loose}; that is, for all finitely labeled
vertices $u,v\in V$,
\begin{equation*}
\lambda(u)\neq\lambda(v)\quad\Rightarrow\quad |\lambda(u)-\lambda(v)|\ge 2.
\end{equation*}
We may also assume that the smallest finite label is $2$ and the largest is $m-2$ for
some positive integer $m$.
The unused labels $1$ and $m-1$ are intentional.   They provide room at the two ends of the construction so that
the intervals $M_c$, defined below, function as bounded gap intervals rather than as extreme-gap artifacts. 
These conventions ensure the required gap-borderline form and are used later 
to prove that the constructed sets are irreducible fundamental extenders.

The values of $\lambda$ are taken as interval endpoints and determine gaps along the real line. The two \emph{extreme gaps} are $[-\infty,0]$ and $[m-1,+\infty]$, and for every $1\le c\le m-1$, the unit interval $[c-1,c]$ is called a \emph{bounded gap}.

We now construct a multiset of intervals determining an interval order whose
Hilbert extender graph ultimately contains an induced subgraph isomorphic to H.
For  every $1\le c\le m-1$,
we set the gap marker interval $M_c=[c-1,c]$.  
These intervals serve as witnesses for edges between vertices sharing the same finite label, and guarantee irreducibility of some fundamental extenders.

If $\lambda(z)$ is an extreme label, then the edges  of $H$ incident with $z$  must be witnessed by a single interval, according to Lemma~\ref{lem:Hilbertset}.
First set
$$
\nu_z=\begin{cases}
\max\{\lambda(b): bz\in E\}&\text{if $\lambda(z)=-\infty$},\\
\min\{\lambda(b): bz\in E\}&\text{if $\lambda(z)=+\infty$},
\end{cases}
$$
which is well-defined because $H$ is connected with at least two vertices.  Now define the \emph{peripheral interval} $J(z)$ as follows:
$$
J(z)=\begin{cases}
	[0,\nu_z]&\text{if $\lambda(z)=-\infty$},\\
	[\nu_z-1,m-1]&\text{if $\lambda(z)=+\infty$}.
\end{cases}
$$

\begin{lemma}\label{lem:periph-iff}
If $\lambda(z)=\pm\infty$ and $\lambda(x)$ is finite, then
$
zx\in E$ if and only if 
$ [\lambda(x)-1,\lambda(x)]\subseteq J(z).
$
\end{lemma}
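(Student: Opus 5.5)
By the reflection symmetry noted after the track-axioms (scaling by $-1$, then translating), the case $\lambda(z)=+\infty$ mirrors the case $\lambda(z)=-\infty$. We therefore treat only $\lambda(z)=-\infty$, where $J(z)=[0,\nu_z]$ and $\nu_z=\max\{\lambda(b):bz\in E\}$. All finite labels are integers in $[2,m-2]$, so for finite $\lambda(x)$ we have $[\lambda(x)-1,\lambda(x)]\subseteq[0,\nu_z]$ exactly when $\lambda(x)\le\nu_z$. Thus the lemma amounts to the equivalence
$$
zx\in E\quad\Longleftrightarrow\quad \lambda(x)\le \nu_z .
$$

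The forward direction is immediate from the definition of $\nu_z$ as a maximum over the neighbours of $z$. For the converse, take finite $x$ with $\lambda(x)\le\nu_z$, and let $b$ be a neighbour of $z$ with $\lambda(b)=\nu_z$. There are two cases.

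If $\lambda(x)<\nu_z$, then $-\infty=\lambda(z)<\lambda(x)<\lambda(b)$ and $zb\in E$. Applying \betweenness\ with $a=z$, the middle vertex $x$ and $c=b$ gives $zx\in E$. This application is legitimate because the labels lie in $\mathbb Q\cup\{\pm\infty\}$ and are compared in the extended order, so \betweenness\ applies to triples containing an extreme label.

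If $\lambda(x)=\nu_z$ and $x\neq b$, then by \clique\ the vertices $x$ and $b$ are adjacent. If $zx\notin E$, then $(z,b,x)$ is an induced path with $\lambda(b)=\lambda(x)$. Now \path\ forces $\lambda(z)$ to be finite, which is a contradiction, so $zx\in E$.

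The argument is short. The only point needing care is the strict-inequality case, where one must check that \betweenness\ genuinely applies with an extreme label at one end. The paper does exactly this in the proof of Proposition~\ref{prop:axioms_satisfied}, where it compares labels against $\pm\infty$. For the $+\infty$ case one can either invoke the reflection symmetry or rerun the argument with $\min$ in place of $\max$. In that case $[\lambda(x)-1,\lambda(x)]\subseteq[\nu_z-1,m-1]$ holds exactly when $\lambda(x)\ge\nu_z$, using $\lambda(x)\le m-2$.
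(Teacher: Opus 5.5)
Your proof is correct and is essentially the paper's argument: the forward direction follows from the maximality of $\nu_z$ (using $\lambda(x)\ge 2$), and the converse splits into the strict case, handled by \betweenness\ applied to the edge $zb$, and the equality case, handled by \clique\ followed by \path\ on the induced path $(z,b,x)$. Your direct check of the $+\infty$ case (using $\lambda(x)\le m-2$) is a worthwhile addition, since the paper only asserts symmetry there.
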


\begin{proof}
We prove the case $\lambda(z)=-\infty$; the case $\lambda(z)=+\infty$ is symmetric.

($\Rightarrow$)
Assume $zx\in E$. Note that
$J(z)=[0,\nu_z]$, where $\nu_z=\max\{\lambda(b):zb\in E\}$.
In particular, $2\leq \lambda(x)\le \nu_z$, therefore
$[\lambda(x)-1,\lambda(x)]\subseteq [0,\nu_z]=J(z)$.

($\Leftarrow$)
Assume $[\lambda(x)-1,\lambda(x)]\subseteq J(z)$. Choose a neighbor $c$ of $z$ with $\lambda(c)=\nu_z$.

If $\lambda(x)<\nu_z$, apply \betweenness\ to the edge $zc$ and the vertex $x$.
Since $\lambda(z)=-\infty<\lambda(x)<\lambda(c)$, \betweenness\ yields $zx\in E$.

If $\lambda(x)=\nu_z$, then $cx\in E$ by \clique. If $zx\notin E$, then
$(z,c,x)$ would be an induced $P_3$ with $\lambda(z)$ extreme and
$\lambda(c)=\lambda(x)$, contradicting \path. Hence $zx\in E$.
\end{proof}

Next we introduce intervals that witness edges  of $H$ whose endpoints have distinct finite labels.  
For an edge $uw\in E$ with $\lambda(u)<\lambda(w)$, the witness interval is denoted $J(uw)$.  
Its endpoints are chosen so that any resulting borderline incidence is consistent with Lemma~\ref{lem:Twintriple}. 
The witnesses must also respect an additional 
restriction arising from chorded 4-cycles, expressed
in terms of the following configuration, called a diamond ring.

\begin{figure}
	\centering
	\begin{tikzpicture}[scale=0.8,>=Stealth]
		
		\tikzset{
			vertex/.style={
				circle,
				draw,
				thick,
				minimum size=8.5mm,
				inner sep=0pt,
				font=\small
			},
			extreme/.style={
				vertex,
				double,
				double distance=1pt
			},
			every label/.style={font=\small}
		}
		
		\node[extreme,label=above:$x$] (x) at (-2.2,2.2) {$-\infty$};
		\node[extreme,label=below:$y$] (y) at (-2.2,0.0) {$-\infty$};
		
		\node[vertex,label=above:$a$] (a) at (0.0,2.2) {$2$};
		\node[vertex,label=below:$b$] (b) at (0.4,0.0) {$4$};
		
		\node[vertex,label=above:$c$] (c) at (2.2,2.2) {$6$};
		\node[vertex,label=below:$d$] (d) at (2.6,0.0) {$6$};
		
		\node[vertex,label=above:$f$] (f) at (5.0,2.2) {$8$};
		\node[vertex,label=below:$e$] (e) at (5.4,0.0) {$8$};
		
		\node[vertex,label=above:$h$] (h) at (7.8,2.2) {$10$};
		\node[vertex,label=below:$g$] (g) at (8.2,0.0) {$10$};
		
		\node[extreme,label=right:$z$] (z) at (10.2,1.1) {$+\infty$};
		
		\draw[->,double,double distance=1pt,thick] (a)--(c);
		\draw[->,double,double distance=1pt,thick] (b)--(c);
		
		\draw[->,double,double distance=1pt,thick] (e)--(d);
		\draw[->,double,double distance=1pt,thick] (f)--(d);
		
		\draw[->,double,double distance=1pt,thick] (e)--(g);
		\draw[<->,double,double distance=1pt,thick] (f)--(g);
		\draw[->,double,double distance=1pt,thick] (h)--(f);
		
		\draw[cyan,very thick,->] (c) to[bend left=18] (d);
		\draw[red,very thick,->]  (d) to[bend left=18] (c);
		
		\draw[red,very thick,->]  (f) -- (e);
		\draw[cyan,very thick,->] (g) -- (h);
		
	\end{tikzpicture}
	\caption{The side markers and twin arcs for the track-labeled graph in Figure~\ref{fig:running-example-graph}.  Side markers are shown in black; blue twin arcs are drawn in cyan.  Note that $fg$ is a diamond chord.}
	\label{fig:running-example-side-twin}
\end{figure}

A \emph{diamond graph} is a graph isomorphic to \(K_4-e\). The edge joining
its two vertices of degree \(3\) is called the \emph{diamond chord}. 
Observe that a diamond in a track-labeled graph may have both finite and extremal vertex labels. We now define a special track-labeled diamond configuration in which all track-labels are finite. Subject to this requirement, the track-labeling is essentially forced and is shown in Figure~\ref{fig:diamond_ring}.

A \emph{diamond ring} 
\((x,a,y,b)\) is a track-labeled diamond graph whose chord is
\(xy\),  and whose vertices have two distinct finite labels satisfying
\[
-\infty<\lambda(x)=\lambda(a)=\alpha<\beta=\lambda(y)=\lambda(b)<+\infty.
\]

\begin{figure}[ht]
\centering

\begin{tikzpicture}[scale=.95,>=latex, node distance=15mm,
	v/.style={circle,draw,inner sep=1pt,minimum size=10pt}]
	\tikzset{
		side marker/.style={
			double,
			double distance=0.8pt,
			line width=0.45pt
		},
		twin arc/.style={
			line width=.1em,
			->,
			shorten >=1pt,
			shorten <=1pt
		}
	}
	
	\node[v,label=left:$y$]  (y) at (-1.6,0) {$\beta$};
	\node[v,label=right:$x$] (x) at (1.6,0) {$\alpha$};
	\node[v,label=above:$b$] (v) at (0,1.7) {$\beta$};
	\node[v,label=below:$a$] (u) at (0,-1.7) {$\alpha$};
	
	\draw[side marker,<->,shorten >=1pt,shorten <=1pt] (x) -- (y);
	\draw[side marker,->,shorten >=1pt,shorten <=1pt]
	(v) to[out=-15,in=105] (x);
	\draw[side marker,->,shorten >=1pt,shorten <=1pt]
	(u) to[out=155,in=-65] (y);
	
	\draw[cyan,twin arc]
	(y) to[out=75,in=205] (v);
	\draw[red,twin arc]
	(x) to[out=-105,in=25] (u);
\end{tikzpicture}

\caption{The diamond ring $(x,a,y,b)$ in the twin-arcs digraph associated with
$H$ ($-\infty<\alpha < \beta<+\infty$).}
\label{fig:diamond_ring}
\end{figure}

The term \emph{ring} is motivated by the fact that, in the \emph{hybrid digraph}
defined by the twin-arcs and side markers, the twin-arcs $x\to a$ and
$y\to b$, together with the side markers $a\mto y$ and $b\mto x$, form a
directed $4$-cycle.  
A  diamond
ring is shown in Figure~\ref{fig:diamond_ring}, while the graph
in Figure~\ref{fig:running-example-graph} contains the  diamond ring
\((f,e,g,h)\).


We create a log of constraints called \emph{side-marker arrows}, determined by twin triples. We interpret these arrows as orientation of the edges of $H$ added to the twin-arcs digraph. For each edge $uw\in E$ whose
endvertices have distinct finite labels, define the side-markers:
\begin{equation*}
\begin{array}{rl}
u\mto w,     & \text{if }(u,w,v)\text{ is a twin triple for some }v\in V, \\
u\mfrom w,   & \text{if }(w,u,z)\text{ is a twin triple for some }z\in V, \\
u\mtom w,    & \text{if } uw \text{ is a diamond chord, or both twin triples are present,}\\
u\mline w,   & \text{if no twin triple includes } uw. \\
\end{array}
\end{equation*}

It is understood that an edge $uw\in E$ carries a two-way side-marker $u\mtom w$ if
either both $(u,w,v)$ and $(w,u,z)$ are twin triples for some $v,z\in V$, or $uw$ is a
diamond chord. Multiple twin triples may produce the same side-marker, but only one is logged. Thus exactly one of these four markers appears between each pair of vertices with distinct finite labels.

The markers $u\mto w,$  and $u\mfrom w$ are called \emph{one-way side-markers}.  Figure~\ref{fig:diamond_ring} shows the side-markers of a  diamond ring. 


A \emph{central interval} is a witness for an edge of $H$ whose endvertices receive distinct finite labels.
For an edge $xy\in E$ with $2\leq\lambda(x)=\alpha<\lambda(y)=\beta\leq m-2$, we introduce a
central interval $J(xy)$, with endpoints specified below (see also Figure~\ref{fig:Jxy}):
$$
J(xy)=
\begin{cases}
[\alpha-1,\beta],     & \text{if } x\mline y,\\
[\alpha-1,\beta-1],   & \text{if } x\mto y,\\
[\alpha,\beta],       & \text{if } x\mfrom y,\\
[\alpha,\beta-1],     & \text{if } x\mtom y.
\end{cases}
$$
Note that $J(xy)\neq J(z)$ for all $x,y,z$, because $2\leq\lambda(x)=\alpha<\lambda(y)=\beta\leq m-2$ imply $J(xy)\subseteq [1,m-2]$, whereas, either $0$ or $m-1$ is an endpoint of the interval $J(z)$.

Observe that if a marker points toward $x$ then the gap marker interval $[\lambda(x)-1,\lambda(x)]$ is not covered by $J(xy)$ for any $y$.

Let $\mathcal{J}_0$ be the multiset of witness intervals constructed so far:
\begin{align*}
\mathcal{J}_0=\quad&\{M_c:1\leq c\leq m-1\}\quad\cup\\
&\{J(z):\lambda(z)=\pm\infty\}\quad\cup\\
&\{J(uw):uw\in E\text{ and }-\infty<\lambda(u)<\lambda(w)<+\infty\}.
\end{align*}
The final multiset $\mathcal J$ used to construct the representation is obtained from $\mathcal J_0$ by adding the private tags introduced in Section~\ref{sub:fundamental-extender-reps}.

For the loose track-labeled graph shown in Figure~\ref{fig:running-example-graph},  the twin-arcs and the side-markers are in Figure~\ref{fig:running-example-side-twin}; the initial interval multiset $\mathcal J_0$ consists of the gap intervals, the peripheral intervals, and the central intervals.
The gap intervals are
$$
M_i=[i-1,i]\qquad\text{for }1\le i\le 11.
$$
The peripheral intervals are
\begin{align*}
	J(x)&=[0,2],&
	J(y)&=[0,4],&
	J(z)&=[9,11].
\end{align*}
The central intervals are
\begin{align*}
	J(ab)&=[1,4],&
	J(ac)&=[1,5],&
	J(bc)&=[3,5],\\
	J(de)&=[6,8],&
	J(df)&=[6,8],&
	J(eg)&=[7,9],\\
	J(fg)&=[8,9],&
	J(fh)&=[8,10].
\end{align*}
The central intervals $J(de)$ and $J(df)$ have the same endpoints, but they
are distinct elements of the multiset $\mathcal J_0$.

\begin{figure}
  \centering
  \begin{tikzpicture}[scale=.65]
      \foreach \x in {1,2,5,6}
        \draw[line width=.05em] (\x,0) -- (\x,1);
   \node at (1,1.25){$\alpha-1$};    \node at (2,1.25){$\alpha$};
        \node at (3.5,-.25){$x\mline y$ };
    \node at (5,1.35){$\beta-1$};    \node at (6,1.25){$\beta$};
      \node[A](L)at(1,.5){};  \node[A](R)at(6,.5){};
      \draw[line width=.1em](L)--(R);
      \node[label=right: {$J(xy)$}]at(2.5,.75){};
\end{tikzpicture}
\hskip1.5cm
  \begin{tikzpicture}[scale=.65]
      \foreach \x in {1,2,5,6}
        \draw[line width=.05em] (\x,0) -- (\x,1);
   \node at (1,1.25){$\alpha-1$};    \node at (2,1.25){$\alpha$};
       \node at (3.5,-.25){$x \mfrom y$};
          \node at (5,1.35){$\beta-1$};    \node at (6,1.25){$\beta$};
      \node[A](L)at(2,.5){};  \node[A](R)at(6,.5){};
      \draw[line width=.1em](L)--(R);
      \node[label=right: {$J(xy)$}]at(2.5,.75){};
\end{tikzpicture}
\vskip1cm
  \begin{tikzpicture}[scale=.65]
      \foreach \x in {1,2,5,6}
        \draw[line width=.05em] (\x,0) -- (\x,1);
   \node at (1,1.25){$\alpha-1$};    \node at (2,1.25){$\alpha$};
        \node at (3.5,-.25){$x \mto y$};
          \node at (5,1.35){$\beta-1$};    \node at (6,1.25){$\beta$};
      \node[A](L)at(1,.5){};  \node[A](R)at(5,.5){};
      \draw[line width=.1em](L)--(R);
      \node[label=right: {$J(xy)$}]at(2.5,.75){};
\end{tikzpicture}
\hskip1.5cm
  \begin{tikzpicture}[scale=.65]
      \foreach \x in {1,2,5,6}
        \draw[line width=.05em] (\x,0) -- (\x,1);
   \node at (1,1.25){$\alpha-1$};    \node at (2,1.25){$\alpha$};
       \node at (3.5,-.25){$x \mtom y$};
          \node at (5,1.35){$\beta-1$};    \node at (6,1.25){$\beta$};
      \node[A](L)at(2,.5){};  \node[A](R)at(5,.5){};
      \draw[line width=.1em](L)--(R);
      \node[label=right: {$J(xy)$}]at(2.5,.75){};
\end{tikzpicture}
\caption{Side-markers determine the central intervals.}
\label{fig:Jxy}
\end{figure}
\subsection*{Construction and irreducibility of sets representing vertices}
\label{sub:fundamental-extender-reps}

For each vertex $x$ of $H$, we are nearly ready to define a set of intervals $S_x$ using the $\mathcal{J}_0$-intervals.
In this subsection our goal is to show that the sets $S_x$ are irreducible fundamental extenders. 
Each set $S_x$ is constructed as a union of a gap set and a borderline set. 
The gap sets are defined as
$$
G_x=\begin{cases}
\varnothing & \text{if $\lambda(x)=\pm\infty$}\\
\{I\in\mathcal{J}_0:M_\beta\subset I\}&\text{if $\lambda(x)=\beta$ and $-\infty<\beta<+\infty$}.\\
\end{cases}
$$
To accommodate the later addition of private tags, which will make the sets $S_x$ pairwise distinct, we first define pre-borderline sets.   Private tags are one-point intervals;
before adding  them, we must verify that each pre-borderline set lies on a single side of the relevant gap.   

For $x \in V$, 
define the \emph{pre-borderline set} $Z^0_x$ this way:
$$
Z^0_x=\begin{cases}
\{J(x)\} & \text{if $\lambda(x)=\pm\infty$}\\
\{J(ux):ux\in E, M_\beta\not\subset J(ux)\}&\text{if $\lambda(x)=\beta$ and $-\infty<\beta<+\infty$}.\\
\end{cases}
$$
Also define
$$
S^0_x=G_x\cup Z^0_x.
$$

A priori, the pre-borderline set $Z_x^0$ could contain intervals meeting different endpoints of the gap set $M_\beta$. 
 We call $Z_x^0$ \emph{legitimate} 
if all of its members meet $M_\beta$ at the same endpoint.  
Lemma~\ref{lem:legitimate} shows that $Z_x^0$ is legitimate for every finite-labeled vertex $x$.  
This will allow us to add private tags on the appropriate side of $M_\beta$ without destroying the gap--borderline form.  
Notice also that peripheral intervals are not used in the pre-borderline sets of finite-labeled vertices; by Lemma~\ref{lem:periph-iff}, they are accounted for through the corresponding gap sets.

Now we take a technical detour to address difficulties arising from diamond rings.
\begin{lemma}
\label{lem:let-it-shrink}
Let $H=(V,E)$ be a bi-fork-free track-labeled graph. Let $(x,a,y,b)$ be 
a diamond ring of the hybrid digraph associated with $H$, and let
$w\in V$, $w\notin\{x,a,y,b\}$.
If  $wx\in E$ carries a side-marker pointing to $x$ (that is, $w\mto x$ or
$w\mtom x$), then $\lambda(w)>\lambda(x)$.
If   $wy\in E$ carries a side-marker pointing to $y$, then
$\lambda(w)<\lambda(y)$.
\end{lemma}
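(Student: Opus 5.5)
The plan is to turn the assumed side-marker into a twin-arc leaving $x$ (respectively $y$) whose colour is opposite to that of the twin-arc supplied by the diamond ring. The two arcs then form a bi-fork or a bi-arrow, which contradicts bi-fork-freeness or Lemma~\ref{lem:kernel}(\ref{lem:kernel:no-biarrow}).

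\emph{Step 1: the arcs supplied by the ring.} In the diamond ring $(x,a,y,b)$ the edges are $xy,xa,xb,ya,yb$, and $ab\notin E$. Hence $(b,x,a)$ is an induced path with $\lambda(x)=\lambda(a)=\alpha<\beta=\lambda(b)$. It is therefore a twin triple, and it gives a red arc $x\to a$. Likewise $(a,y,b)$ is a twin triple with $\lambda(a)=\alpha<\beta$, and it gives a blue arc $y\to b$.

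\emph{Step 2: the arc supplied by the marker (case $x$).} Side-markers are only defined between vertices with distinct finite labels, so $\lambda(w)$ is finite and $\lambda(w)\neq\alpha$. Suppose, for a contradiction, that $\lambda(w)<\alpha$. I would show that $x$ then emits a blue twin-arc. The marker points to $x$, so either $w\mto x$ or $w\mtom x$. If some twin triple $(w,x,v)$ exists, which covers $w\mto x$ and the ``both twin triples'' form of $w\mtom x$, then $\lambda(v)=\alpha>\lambda(w)$, and this gives a blue arc $x\to v$. The remaining possibility is that $wx$ is a diamond chord. Then $wx$ is the chord of some diamond ring $(x',a',y',b')$ with $\lambda(x')<\lambda(y')$. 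Since $\lambda(w)<\lambda(x)$, we must have $w=x'$ and $x=y'$. By Step~1 applied to that ring, $x$ emits a blue arc $x\to b'$.

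\emph{Step 3: contradiction.} By Steps 1 and 2, $x$ emits a red arc $x\to a$ and a blue arc $x\to v$ for some $v$, all three vertices labelled $\alpha$. If $v\neq a$, then $(x;v,a)$ is a bi-fork, contradicting bi-fork-freeness. If $v=a$, the two arcs form a bi-arrow from $x$ to $a$, contradicting Lemma~\ref{lem:kernel}(\ref{lem:kernel:no-biarrow}). Hence $\lambda(w)>\lambda(x)$.

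The statement for $y$ is the mirror argument. Assuming $\lambda(w)>\beta$, the marker pointing to $y$ produces a red arc leaving $y$. In the diamond-chord case $y$ is the smaller-labelled end $x'$ of the chord and emits the red arc $x'\to a'$. Together with the blue arc $y\to b$, this again gives a bi-fork or a bi-arrow.

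The only delicate point is the diamond-chord case of $\mtom$. There one must check that the orientation convention of a diamond ring forces $x$ to be its larger-labelled chord endpoint, so that the inherited arc has the needed colour. The hypothesis $w\notin\{x,a,y,b\}$ appears not to be needed by this argument.
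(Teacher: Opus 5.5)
Your proof is correct and follows the paper's argument essentially step for step. You assume $\lambda(w)<\lambda(x)$ and extract a blue arc leaving $x$, either from a twin triple $(w,x,t)$ or, in the diamond-chord case, from a ring whose chord is $wx$ with $x$ its larger-labelled end; you then set this arc against the red arc $x\to a$ to obtain a bi-fork or a bi-arrow, and the claim for $y$ is the mirror image (the paper's ``reflection''). Your remark that the hypothesis $w\notin\{x,a,y,b\}$ is never used is also accurate.
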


\begin{proof}  
We prove the first implication; the second follows by reflection. Note that the presence of the $wx$ side-marker implies $\lambda(w)\neq\lambda(x)$. Suppose, for a contradiction, that $\lambda(w)<\lambda(x)=\alpha$.

Recall that the arc $x\to a$ of the diamond ring is  red (see Figure~\ref{fig:diamond_ring}).

\smallskip
\noindent\textsc{Case 1:} $w\mto x$ or ($w\mtom x$ and $wx$ is not a diamond chord). By definition of side-markers, there exists $t$ with
$\lambda(t)=\lambda(x)=\alpha$ such that $(w,x,t)$ is a twin triple. Because $\lambda(w)<\lambda(x)=\alpha$, it follows that $x\to t$ is a blue arc.

\smallskip
\noindent\textsc{Case 2:} $w\mtom x$ and $wx$ is a diamond chord. Then there exists a diamond ring
$(w,u,x,t)$, where $\lambda(u)=\lambda(w)<\lambda(x)=\lambda(t)=\alpha$, thus  $x\to t$ is a blue arc.

In either case, the red arc $x\to a$ and the newly produced blue arc $x\to t$ both leave
$x$.  If their heads coincide, then $D(\alpha)$ contains a bi-arrow,
contradicting Lemma~\ref{lem:kernel}(\ref{lem:kernel:no-biarrow}).  If their
heads are distinct, then these two arcs form a bi-fork centered at $x$,
contradicting that the track-labeling is bi-fork-free.  This contradiction proves
that $\lambda(w)>\alpha$.  By symmetry, the second implication follows as well.

\end{proof}

\begin{lemma}\label{lem:legitimate}
If $H=(V,E)$ is a bi-fork-free track-labeled graph and $\lambda(x)$ is finite,
then $Z^0_x$ is legitimate.
\end{lemma}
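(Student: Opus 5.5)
The plan is to reduce legitimacy to a statement about twin-arcs leaving $x$, and then use bi-fork-freeness together with Lemma~\ref{lem:kernel}(\ref{lem:kernel:no-biarrow}), in the same way as in the proof of Lemma~\ref{lem:let-it-shrink}. Let $\lambda(x)=\beta$ be finite. The members of $Z^0_x$ are central intervals $J(ux)$ for edges $ux\in E$ where $\lambda(u)$ is finite and different from $\beta$. Equal finite labels produce no central interval, and peripheral intervals never enter $Z^0_x$. So only two kinds of neighbours $u$ have to be examined.

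\emph{Step 1: locate each excluded interval.} We read the endpoints of $J(ux)$ off its definition.
\begin{itemize}
\item Suppose $\lambda(u)=\alpha<\beta$. Then $J(ux)$ starts at $\alpha-1$ or $\alpha$, both at most $\beta-1$. Its right end is $\beta$ when the marker does not point to $x$ (cases $u\mline x$ and $u\mfrom x$). It is $\beta-1$ when the marker points to $x$ (cases $u\mto x$ and $u\mtom x$). Hence $J(ux)\in Z^0_x$ exactly when the marker points to $x$, and then $J(ux)$ ends at $\beta-1$, i.e.\ it meets $M_\beta$ at its left endpoint.
\item Suppose $\lambda(u)=\gamma>\beta$. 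Symmetrically, $J(xu)\in Z^0_x$ exactly when the marker points to $x$ (cases $x\mfrom u$ and $x\mtom u$), and then $J(xu)$ begins at $\beta$, i.e.\ it meets $M_\beta$ at its right endpoint.
\end{itemize}
So legitimacy can fail only if $x$ has a lower-labelled neighbour $u$ and a higher-labelled neighbour $u'$, both edges carrying markers that point to $x$.

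\emph{Step 2: a marker pointing to $x$ forces a twin-arc out of $x$.} Take a lower-labelled neighbour $u$ with marker pointing to $x$. We claim there is a blue arc $x\to t$ for some $t$ with $\lambda(t)=\beta$.
\begin{itemize}
\item If the marker comes from a twin triple $(u,x,t)$, then $\lambda(u)<\beta$, so the arc $x\to t$ is blue.
\item Otherwise $ux$ is a diamond chord. In a diamond ring the chord joins a vertex labelled $\alpha$ to one labelled $\beta$. Since $\lambda(u)<\lambda(x)$, the ring is $(u,a,x,b)$ with $\lambda(u)=\lambda(a)<\lambda(x)=\lambda(b)$. Its twin triple $(a,x,b)$ gives a blue arc $x\to b$, exactly as in Case~2 of the proof of Lemma~\ref{lem:let-it-shrink}.
\end{itemize}
Reflecting the argument, a higher-labelled neighbour $u'$ with marker pointing to $x$ yields a red arc $x\to t'$ with $\lambda(t')=\beta$.

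\emph{Step 3: conclude.} If both kinds of neighbours exist, $x$ emits a blue arc and a red arc inside $D(\beta)$. If the heads coincide, this is a bi-arrow, which is excluded by Lemma~\ref{lem:kernel}(\ref{lem:kernel:no-biarrow}). If the heads differ, it is a bi-fork centred at $x$, contrary to the hypothesis that $H$ is bi-fork-free. Therefore all members of $Z^0_x$ meet $M_\beta$ at the same endpoint, which is what legitimacy requires.

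The step needing the most care is Step~2 in the diamond-chord case. One must check that the orientation of the ring is forced by the order of the labels, so that the twin-arc produced at $x$ really has the predicted colour. The rest is bookkeeping with the four cases of $J(xy)$.
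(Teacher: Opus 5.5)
Your proof is correct and follows essentially the same route as the paper. Both arguments reduce illegitimacy to a lower- and a higher-labelled neighbour whose markers both point to $x$, produce a blue and a red twin-arc leaving $x$, and conclude via Lemma~\ref{lem:kernel}(\ref{lem:kernel:no-biarrow}) or bi-fork-freeness. The only difference is in the diamond-chord case: the paper cites Lemma~\ref{lem:let-it-shrink}, whereas you inline its twin-arc argument edge by edge, which gives a slightly more uniform presentation and avoids checking that the other neighbour lies outside the ring.
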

\begin{proof}
Fix $x\in V$ with finite label $\lambda(x)=\beta$.
Recall that for finite $\beta$,
$$
Z^0_x=\{J(ux):ux\in E, M_\beta\not\subset J(ux)\}.
$$
Thus $J(ux)\in Z^0_x$ holds exactly for those edges $ux$ whose marker
points \emph{toward} $x$ (by the definition of the central intervals). As noted earlier, in this case, the interval $[\beta-1,\beta]$ is not covered by $J(ux)$, and so 
$J(ux)$ meets the gap $[\beta-1,\beta]$ in exactly one endpoint.

Assume for a contradiction that $Z^0_x$ is not legitimate. Then there exist neighbors
$u,v$ of $x$ with $\lambda(u)<\beta<\lambda(v)$ such that the markers on both edges
$ux$ and $xv$ point toward $x$.

\smallskip
\noindent\textsc{Case 1:} neither $ux$ nor $xv$ is a diamond chord.
The marker on $ux$ implies a twin triple $(u,x,u')$ with
$\lambda(u')=\lambda(x)=\beta$ so there is a blue arc $xu'$. Similarly the marker $vx$ implies a twin triple $(v,x,v')$ with $\lambda(v')=\beta$ so there is a red arc $xv'$. By Lemma~\ref{lem:kernel}(\ref{lem:kernel:no-biarrow}), $u'\neq v'$, so there is a bi-fork centered at
$x$, contradicting that $H$ is bi-fork-free.

\smallskip
\noindent\textsc{Case 2:} at least one of $ux$ and $xv$ is a diamond chord.
If $ux$ is a diamond chord, then by Lemma~\ref{lem:let-it-shrink}, $\lambda(v)<\beta$, a contradiction. Similarly, if $vx$ is a diamond chord, then $\lambda(u)>\beta$, a contradiction.
\end{proof}

To ensure that the representing sets are distinct, we add \emph{private tags} to finite-labeled vertices.  
For each vertex $x$ with finite label $\lambda(x)=\beta$, 
add a new represented interval $T_x$, distinct from all previously constructed represented intervals.  By  Lemma~\ref{lem:legitimate}, $Z_x^0$ is legitimate.
If the intervals in $Z_x^0$ meet  the gap marker $M_\beta$ at its left endpoint, set $T_x=[\beta-1,\beta-1]$; if they meet $M_\beta$ at its right endpoint, set $T_x=[\beta,\beta]$.  
If $Z_x^0=\varnothing$, choose either side, say $T_x=[\beta-1,\beta-1]$.  

After all private tags have been added, set
$$
\mathcal J=\mathcal J_0\cup\{T_x:x\in V\text{ and }-\infty<\lambda(x)<+\infty\}.
$$

The final borderline sets are $$ Z_x= \begin{cases} Z_x^0, & \text{if } \lambda(x)=\pm\infty,\\ Z_x^0\cup\{T_x\}, & \text{if } -\infty<\lambda(x)<+\infty. \end{cases} $$ Finally, set $ S_x=G_x\cup Z_x. $ 
Since private tags contain no bounded gap, the gap sets $G_x$ are unchanged when passing from $\mathcal J_0$ to $\mathcal J$.

The essential purpose of the private tags is to ensure that the sets $S_x$ are pairwise distinct.
Each private tag contains no bounded gap and belongs only to its associated set $S_x$. 
Thus private tags create no new intersections among distinct sets $S_x$, and they do not destroy any existing intersections.  
They also preserve the fundamental-extender property: each private tag is placed on the same side 
of $M_\beta$ as the pre-borderline set to which it is added.  

\begin{corollary} \label{cor:fundamental}
The set $S_x$ is a fundamental extender with respect to $\mathcal{J}$.
\end{corollary}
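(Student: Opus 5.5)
The plan is to verify directly that $S_x=G_x\cup Z_x$ has the gap--borderline form with respect to $\mathcal J$, treating extreme-labeled and finite-labeled vertices separately. The endpoints of $\mathcal J$ are exactly the integers $0,1,\ldots,m-1$. Every such integer is an endpoint of some gap marker $M_c$, and all other intervals (peripheral, central, private tags) have integer endpoints in $[0,m-1]$. Hence the bounded gaps of $\mathcal J$ are precisely the unit intervals $[c-1,c]=M_c$, $1\le c\le m-1$, and the extreme gaps are $[-\infty,0]$ and $[m-1,+\infty]$.

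\emph{Extreme-labeled $x$.} Say $\lambda(x)=-\infty$. Then $G_x=\varnothing$ and $Z_x=\{J(x)\}$ with $J(x)=[0,\nu_x]$. I take the extreme gap $I=[-\infty,0]$, whose gap set is empty and whose right-borderline set $R[I]$ consists of all intervals with left endpoint $0$. Since $J(x)$ begins at $0$, we get $Z_x\subseteq R[I]$, so $S_x=G[I]\cup Z_x$ is a right fundamental extender. The case $\lambda(x)=+\infty$ is symmetric: use $[m-1,+\infty]$ and $L[I]$, since $J(x)$ ends at $m-1$.

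\emph{Finite-labeled $x$ with $\lambda(x)=\beta$.} Take $I=M_\beta=[\beta-1,\beta]$, which is a bounded gap of $\mathcal J$. By definition, $G_x$ is the set of $\mathcal J_0$-intervals containing $M_\beta$. Private tags are one-point intervals and contain no bounded gap, so $G_x=G[I]$ computed in $\mathcal J$. Next, every member of $Z_x^0$ is some $J(ux)$ with $M_\beta\not\subset J(ux)$. By the remark following the definition of central intervals, the marker on $ux$ then points toward $x$, and $J(ux)$ meets $M_\beta$ in exactly one endpoint.

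This is the step needing the most care. I will check from the table defining $J(xy)$ that such an interval actually has that endpoint as its own endpoint: it ends at $\beta-1$ (so it lies in $L[I]$) or begins at $\beta$ (so it lies in $R[I]$). For example, if $\lambda(u)<\beta$ and the marker points toward $x$, then $J(ux)$ has right endpoint $\beta-1$. If $\lambda(u)>\beta$, then $J(xu)$ has left endpoint $\beta$.

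Lemma~\ref{lem:legitimate} says all members of $Z_x^0$ lie on the same side, so $Z_x^0\subseteq L[I]$ or $Z_x^0\subseteq R[I]$. The private tag $T_x$ was chosen as $[\beta-1,\beta-1]\in L[I]$ or $[\beta,\beta]\in R[I]$, matching that side; if $Z_x^0=\varnothing$, either choice is fine. Thus $Z_x$ lies in a single borderline set of $I$, and $S_x=G[I]\cup Z_x$ is a fundamental extender. It is nonempty because it contains $T_x$, and also because $M_\beta\in G_x$.

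The main obstacle is only bookkeeping: confirming that the endpoint set of $\mathcal J$ yields exactly the unit gaps, and that "meets $M_\beta$ at one endpoint" really means membership in the borderline set. Both follow from integrality of all endpoints together with the case table for $J(xy)$.
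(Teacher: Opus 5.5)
Your proof is correct and follows the same route as the paper's. The extreme-labeled case uses the extreme gap, with the single peripheral interval $J(x)$ as the borderline set. The finite-labeled case uses the gap $[\beta-1,\beta]$, invokes Lemma~\ref{lem:legitimate} to put $Z_x^0$ on one side, and places the private tag $T_x$ on that same side.

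Your extra bookkeeping makes explicit what the paper's two-line proof leaves implicit. You check that the bounded gaps of $\mathcal J$ are exactly the unit gaps, that $G_x$ equals the gap set in $\mathcal J$ because tags contain no gap, and, via the table for $J(xy)$, that the one-sided central intervals actually end at $\beta-1$ or begin at $\beta$.
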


\begin{proof}
For $\lambda(x)=\pm\infty$, one has $S_x=\{J(x)\}$, which is a fundamental extender at an extreme gap.  Otherwise, Lemma~\ref{lem:legitimate} implies that $Z^0_x$ is a borderline set of the gap set $G_x$.  
Because the private tag $T_x$ lies on the same side of the gap as $Z_x^0$, it follows that $S_x = G_x \cup Z_x$ is a fundamental extender.
\end{proof}

\begin{lemma}\label{lem:Hilbert}
The set $S_x$ is irreducible with respect to $\mathcal{J}$.
\end{lemma}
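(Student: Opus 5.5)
For an extreme-labeled vertex the claim is immediate: $S_x=\{J(x)\}$ is a singleton, so it has no nontrivial partition. I would therefore assume $\lambda(x)=\beta$ is finite and argue by contradiction. Suppose $S_x=S_1\dot\cup\cdots\dot\cup S_t$ with $t\ge2$, each $S_i$ a fundamental extender with respect to $\mathcal J$ and properly contained in $S_x$.

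The strategy is to track which part contains the gap marker $M_\beta=[\beta-1,\beta]\in G_x$. Say $M_\beta\in S_1$, and let $S_1=G[I]\cup Z$ be a decomposition.

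First I would show that $I$ must be the gap $M_\beta$ itself. The endpoints of $\mathcal J$ are exactly the integers $0,\dots,m-1$, so the bounded gaps of $\mathcal J$ are precisely the unit gaps $[c-1,c]$. The only interval of $\mathcal J$ containing the gap $[c-1,c]$ and equal to $[\beta-1,\beta]$ is one with $c=\beta$. Could $M_\beta$ instead lie in $Z$, i.e.\ be borderline to a neighbouring gap $[\beta-2,\beta-1]$ or $[\beta,\beta+1]$? If so, $G[I]$ contains every interval of $\mathcal J$ covering that neighbouring gap, in particular $M_{\beta-1}$ or $M_{\beta+1}$. These lie outside $S_x$, since they do not cover $M_\beta$ and are not in $Z_x$. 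This contradicts $S_1\subseteq S_x$. An extreme gap is also excluded, since $G=\varnothing$ there forces $S_1=Z$ to consist of intervals with endpoint $0$ or $m-1$. Looseness and the unused labels $1$ and $m-1$ keep $M_\beta$ away from the ends.

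Hence $I=M_\beta$, and $S_1\supseteq G[M_\beta]=G_x$, using that the private tags contain no bounded gap. Every other part $S_i$, $i\ge2$, is then a nonempty subset of $Z_x$ disjoint from $G_x$.

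The main step is to show that no nonempty subfamily of $Z_x$ is a fundamental extender of $\mathcal J$. Take such an $S_i=G[I']\cup Z'$. If $I'$ is a bounded gap $[c-1,c]$, then $M_c\in G[I']\subseteq S_i\subseteq Z_x$. But $Z_x$ contains only central intervals and the tag $T_x$, never a gap marker, so this is impossible. If $I'$ is extreme, then $S_i$ consists of intervals starting at $0$ or ending at $m-1$. Central intervals lie in $[1,m-2]$, and $T_x$ is a point in $[1,m-2]$ because $2\le\beta\le m-2$, so again this is impossible.

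Thus $t=1$, contradicting $t\ge2$, and $S_x$ is irreducible. The delicate point I expect is the first step: excluding the case where $M_\beta$ appears only as a borderline interval of a part. This is exactly where the loose labeling, with its unused labels $1$ and $m-1$, and the presence of every gap marker $M_c$ are used.
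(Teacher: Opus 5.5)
Your proof is correct and follows essentially the same route as the paper: the part containing $M_\beta$ must use the gap $[\beta-1,\beta]$ and therefore absorbs all of $G_x$, and any further part is excluded because a bounded gap would force a marker $M_c$ into it, while an extreme gap is incompatible with what remains of $S_x$. In the extreme-gap case your argument is slightly cleaner than the paper's. You observe that what remains, $Z_x$, consists of central intervals and the tag, all inside $[1,m-2]$; the paper instead argues that such a part would have to contain a peripheral interval $J(z)$, which already lies in $G_x\subseteq A_1$.
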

\begin{proof}
If $\lambda(x)=\pm\infty$, then $S_x=\{J(x)\}$, so it is irreducible.
So we may assume $-\infty < \lambda(x) <+\infty$.  Set $\beta = \lambda(x)$.

Suppose, for a contradiction, that $S_x$ is reducible.  Refining the parts if
necessary, choose a nontrivial partition
$$
S_x=A_1\dot\cup\cdots\dot\cup A_t
$$
into irreducible fundamental extenders.

Because $M_\beta\in S_x$, without loss of generality, $M_\beta\in A_1$. Fix a decomposition of $A_1$ and assume it uses the gap set $G[\beta'-1,\beta']$. Since $M_\beta\in A_1$, it follows that $\beta'\in\{\beta-1,\beta,\beta+1\}$. However, because $2\leq\beta\leq m-2$, and only one of $M_{\beta-1}$, $M_\beta$, $M_{\beta+1}$ is in $S_x$, it follows that $\beta'=\beta$.

Now we examine the decompositions of $A_2$.

\smallskip
\noindent\textsc{Case 1:} $A_2$ has a decomposition as $G\cup Z$ with $G\neq\varnothing$. Because every nonempty gap contains a marker interval, there exists $\alpha$ such that $M_\alpha\in G$. So $M_\alpha\in S_x$. Each $S_x$ contains only one marker interval, so $\alpha=\beta$, contradicting $A_1\cap A_2=\varnothing$.

\smallskip
\noindent\textsc{Case 2:} Every decomposition of $A_2$ as $G\cup Z$ satisfies $G=\varnothing$.  It follows that $G=G[-\infty,0]$ or $G=G[m-1,+\infty]$. In either case, because $A_2\neq\varnothing$, there exists a $z$ such that $J(z)\in Z$. By the definition of $S_x$ and $J(z)$, $[\beta-1,\beta]\subset J(z)$. Because the gap set of $A_1$ is $G[\beta-1,\beta]$, $J(z)\in A_1$, contradicting $A_1\cap A_2=\varnothing$.
\end{proof}

\subsection*{The representation theorem}
\label{sub:proof}

Let $H=(V,E)$ be a connected track-labeled graph with a bi-fork-free labeling $\lambda$, and consider the family 
$\{S_x:x\in V\}$ constructed in Section~\ref{sub:fundamental-extender-reps}. The goal of this subsection is to prove that the intersection graph of $\{S_x:x\in V\}$ is isomorphic to $H$ via $S_x\mapsto x$.

\begin{lemma}
\label{lem:periph-membership}
Let $\lambda(a)=\pm\infty$, and $x\in V$. Then
$
J(a)\in S_x $ 
if and only if 
$\bigl(x=a\bigr)\  \text{or}\  (ax\in E).
$
\end{lemma}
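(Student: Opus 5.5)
We argue directly from the definition $S_x=G_x\cup Z_x$, splitting according to whether $\lambda(x)$ is extreme or finite. Throughout, $J(a)$ is a peripheral interval. By the remark following the definition of central intervals, it differs from every central interval, and it is obviously different from every gap marker $M_c$ and every private tag.

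\emph{Case 1: $\lambda(x)=\pm\infty$.} Here $S_x=Z_x=\{J(x)\}$, so $J(a)\in S_x$ exactly when $J(a)$ and $J(x)$ are the same member of the multiset $\mathcal J$. Since each extreme vertex contributes its own copy of a peripheral interval, this happens if and only if $x=a$. It remains to check that $ax\notin E$ in this case when $x\neq a$, so that the right-hand side of the equivalence is also false. This holds by \independence, since both $a$ and $x$ carry extreme labels.

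\emph{Case 2: $\lambda(x)=\beta$ finite.} Now $x\neq a$. By construction $Z_x$ consists only of central intervals and the private tag $T_x$, so $J(a)\notin Z_x$. Hence $J(a)\in S_x$ if and only if $J(a)\in G_x$, that is, if and only if $M_\beta\subseteq J(a)$. Lemma~\ref{lem:periph-iff}, applied with $z=a$, states that this containment $[\beta-1,\beta]\subseteq J(a)$ holds exactly when $ax\in E$. This completes the equivalence.

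The only delicate point is the multiset bookkeeping. We must make sure that membership of $J(a)$ in $S_x$ is read as membership of that specific copy. In particular, two extreme vertices with identical peripheral intervals must still be treated as distinct elements. I would state this explicitly, noting that for $x\neq a$ extreme, $J(x)$ is a different element of $\mathcal J$ even if it has the same endpoints as $J(a)$. With that settled, the rest is immediate from Lemma~\ref{lem:periph-iff} and \independence.
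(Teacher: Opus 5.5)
Your proof is correct and follows essentially the same route as the paper's. You split on whether $\lambda(x)$ is extreme, where $S_x=\{J(x)\}$, or finite, where $J(a)\notin Z_x$; in the finite case membership reduces to $[\lambda(x)-1,\lambda(x)]\subseteq J(a)$, which Lemma~\ref{lem:periph-iff} turns into $ax\in E$. Your explicit use of \textsc{Independence} to exclude $ax\in E$ for distinct extreme $a,x$, and your note that multiset copies are distinct elements, fill in details the paper leaves implicit.
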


\begin{proof}
If $x$ is extreme, then $S_x=\{J(x)\}$, so $J(a)\in S_x$ forces $x=a$.

Assume $\lambda(x)$ is finite. Then $J(a)\notin Z_x$ (since $Z_x$ contains only
central intervals), hence $J(a)\in S_x$ is equivalent to $J(a)\in G_x$. By the
definition of $G_x$, this holds if and only if
$[\lambda(x)-1,\lambda(x)]\subseteq J(a)$.
By Lemma~\ref{lem:periph-iff}, this containment is equivalent to $ax\in E$. 
\end{proof}

\begin{proposition}\label{prop:intersection}
If $xy\in E$, then $S_x\cap S_y\neq\varnothing$.
\end{proposition}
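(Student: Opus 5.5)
We argue by cases on the labels of the endpoints of the edge $xy\in E$. By \independence, at most one of $x,y$ has an extreme label, so there are three cases: exactly one label is extreme; both labels are finite and equal; both labels are finite and distinct. In each case we exhibit an explicit interval of $\mathcal J$ lying in $S_x\cap S_y$.

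\textsc{Case A:} one endpoint is extreme, say $\lambda(x)=\pm\infty$ and $\lambda(y)$ finite. We have $J(x)\in S_x$ by definition. Lemma~\ref{lem:periph-membership}, applied with $a=x$, gives $J(x)\in S_y$ because $xy\in E$.

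\textsc{Case B:} $\lambda(x)=\lambda(y)=\beta$ is finite. The gap marker $M_\beta=[\beta-1,\beta]$ lies in $\mathcal J_0$ and contains $M_\beta$. Hence $M_\beta\in G_x=G_y$, and so $M_\beta\in S_x\cap S_y$. (The edge hypothesis is not even needed here; this is consistent with \clique.)

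\textsc{Case C:} $\alpha=\lambda(x)<\lambda(y)=\beta$, both finite. The candidate witness is the central interval $J(xy)\in\mathcal J_0$. We show $J(xy)\in S_x$; the argument for $S_y$ is symmetric by reflection. Since $S_x=G_x\cup Z_x$, it suffices to check a dichotomy at the gap $M_\alpha$. If $M_\alpha\subseteq J(xy)$, then $J(xy)\in G_x$. Otherwise $J(xy)\in Z^0_x\subseteq Z_x$, because $Z^0_x$ consists of exactly the intervals $J(ux)$ with $ux\in E$ and $M_\alpha\not\subset J(ux)$.

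One bookkeeping point needs care in Case C. The definition of $Z^0_x$ writes the witness as $J(ux)$ regardless of which endpoint has the smaller label, so $J(xy)$ and $J(yx)$ must be read as the same interval. We also need $\alpha<\beta$ (strict, not merely finite): this guarantees that $J(xy)$ meets $M_\alpha$ and $M_\beta$ at least at an endpoint, so it is either a gap interval or a genuine candidate for the borderline set. From the four cases in the definition, $J(xy)$ has left endpoint $\alpha-1$ or $\alpha$ and right endpoint $\beta-1$ or $\beta$. Since the labels are loose, $\beta-1\ge\alpha+1>\alpha$.

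The main obstacle is therefore the small verification that the definitions of $G_x$ and $Z^0_x$ jointly exhaust the possibilities for a central interval incident to $x$. This is essentially the observation, made right after the definition of the central intervals, that $J(xy)$ fails to cover $M_\lambda(x)$ exactly when the marker points toward $x$. Once this is recorded, no property of the twin-arcs digraph or of bi-fork-freeness is needed for this direction; those properties matter only for the converse, non-adjacency direction.
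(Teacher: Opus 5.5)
Your proof is correct and matches the paper's argument: the same three cases, with $J(x)\in S_y$ via Lemma~\ref{lem:periph-membership} when one endpoint is extreme, the shared gap marker $M_\beta$ when the finite labels are equal, and $J(xy)$ lying in $G_x$ or in $Z_x$ according to whether it covers $M_\alpha$ when the finite labels are distinct. Your remarks about looseness and $J(xy)$ meeting the gaps at an endpoint are harmless but not needed here, since membership in $Z^0_x$ is defined purely by non-containment of $M_\alpha$.
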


\begin{proof}
If $\lambda(x)=\lambda(y)=c$ is finite, then $M_c\in S_x\cap S_y$.
If one of $x,y$ is extreme, say $x$, then $S_x=\{J(x)\}$, and by Lemma~\ref{lem:periph-membership}, $J(x)\in S_y$.

Finally, assume $0<\lambda(x)=\alpha<\beta=\lambda(y)<m$. By definition, the central interval $J(xy)\in\mathcal{J}$. If $[\alpha-1,\alpha]\subset J(xy)$ then $J(xy)\in G_x$; otherwise $J(xy)\in Z_x$. In either case $J(xy)\in S_x$. Similarly, $J(xy)\in S_y$.
\end{proof}

\begin{lemma}
	\label{lem:between-cliquex}
	Let $pq\in E$ be an edge and assume $-\infty < \lambda(p)<\lambda(q) < +\infty$.  The set
	$$
	B(p,q)=\{p,q\}\cup \{z\in V:\lambda(p)<\lambda(z)<\lambda(q)\}
	$$
	induces a clique in $H$.
\end{lemma}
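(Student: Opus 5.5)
We must show that any two distinct vertices of $B(p,q)$ are adjacent. The plan is to use \betweenness\ for every pair involving $p$ or $q$, and \clique\ together with \betweenness\ for pairs of intermediate vertices.

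First, $pq\in E$ by hypothesis. Next, let $z$ satisfy $\lambda(p)<\lambda(z)<\lambda(q)$. Since $pq\in E$, \betweenness\ applied to the triple $(p,z,q)$ yields $pz\in E$ and $zq\in E$. So $p$ and $q$ are adjacent to every other member of $B(p,q)$. Note that every such intermediate $z$ has a finite label, because its label lies strictly between two finite values.

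It remains to treat two distinct intermediate vertices $z,z'$. If $\lambda(z)=\lambda(z')$, this common label is finite, and \clique\ gives $zz'\in E$. Otherwise assume, say, $\lambda(p)<\lambda(z)<\lambda(z')<\lambda(q)$. We already know $pz'\in E$ from the previous step. Applying \betweenness\ to the triple $(p,z,z')$ with the edge $pz'$ gives $zz'\in E$. (Equivalently, one could use the edge $zq$ and the triple $(z,z',q)$.)

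This covers all pairs, so $B(p,q)$ induces a clique. I expect no real obstacle here. The only point needing care is that \betweenness\ requires strict inequalities, which is why equal intermediate labels are handled separately by \clique. Finiteness of those labels is automatic from $\lambda(p)<\lambda(z)<\lambda(q)$ with $\lambda(p),\lambda(q)$ finite.
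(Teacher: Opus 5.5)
Your proof is correct and follows the paper's argument. \betweenness\ on the edge $pq$ makes $p$ and $q$ adjacent to every intermediate vertex, and \betweenness\ on the edge $pz'$ handles two intermediate vertices with distinct labels. You also treat two intermediate vertices with equal labels explicitly via \clique; the paper's proof passes over this case by considering only $\lambda(z_1)<\lambda(z_2)$.
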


\begin{proof}    
Fix $z\in B(p,q)\setminus\{p,q\}$. Since $\lambda(p)<\lambda(z)<\lambda(q)$, \betweenness\ implies that $pz\in E$ and
$zq\in E$.

Now take $z_1,z_2\in B(p,q)\setminus\{p,q\}$ with $\lambda(z_1)<\lambda(z_2)$. Because
$pz_2\in E$, applying \betweenness\ to the edge $pz_2$ with the vertex $z_1$,
and using $\lambda(p)<\lambda(z_1)<\lambda(z_2)$, implies $z_1z_2\in E$.

Since $pq\in E$,
$B(p,q)$ is a clique. 
\end{proof}

\begin{lemma} \label{lem:arrow-universalx}
If $xy\in E$   carries a one-way side-marker $x\mto y$, then $y$ is adjacent to every vertex of label $\lambda(x)$.
\end{lemma}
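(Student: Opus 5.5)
The plan is to argue directly from the definition of side-markers: a one-way marker $x\mto y$ excludes the twin triples that would produce the reverse marker. Write $\alpha=\lambda(x)$ and $\beta=\lambda(y)$. Both labels are finite and distinct, because side-markers are defined only for edges whose endvertices have distinct finite labels.

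Let $z$ be any vertex with $\lambda(z)=\alpha$. If $z=x$, then $zy=xy\in E$ and there is nothing to prove, so assume $z\neq x$. Since $\alpha$ is finite, \clique\ gives $xz\in E$.

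Now suppose, for a contradiction, that $yz\notin E$. Together with $yx\in E$ and $xz\in E$, this makes $(y,x,z)$ an induced path. Its labels satisfy $\lambda(y)=\beta\neq\alpha=\lambda(x)=\lambda(z)$, so it is a twin triple, with $x$ as the middle vertex and $y$ as the off-label end. By the definition of side-markers, the twin triple $(y,x,z)$ logs the marker $x\mfrom y$ on the edge $xy$. The hypothesis $x\mto y$ already guarantees a twin triple $(x,y,v)$ for some $v$. Having both twin triples means the edge carries the two-way marker $x\mtom y$. This contradicts the assumption that the marker on $xy$ is one-way, since exactly one of the four markers is assigned to each such edge. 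Hence $yz\in E$.

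No real obstacle is expected. The only point needing care is to use the convention that the two-way marker takes precedence whenever both twin triples occur (or $xy$ is a diamond chord), so that one-wayness excludes every twin triple of the form $(y,x,z)$. It is also worth noting that neither \betweenness\ nor bi-fork-freeness is needed here.
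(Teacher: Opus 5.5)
Your proof is correct and follows the paper's argument exactly: \clique\ gives $xz\in E$, and a non-edge $yz$ would make $(y,x,z)$ a twin triple that forces the reverse marker on $xy$, contradicting one-wayness. Two of your details are slightly more careful than the paper's ``would force the opposite marker'': you invoke the convention that both twin triples together produce the two-way marker $x\mtom y$, and you treat the trivial case $z=x$ separately.
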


\begin{proof}
Let $u$ be any vertex with $\lambda(u)=\lambda(x)$. By
\clique, $ux\in E$. If $uy\notin E$, then $(y,x,u)$ is  a twin triple, which would force
the opposite marker $y\mto x$ on the edge $xy$, a contradiction. 
\end{proof}

\begin{lemma}
\label{lem:unmarked-diamondx}
Suppose $xy\in E$, $-\infty< \lambda(x)=\alpha<\beta=\lambda(y)<+\infty$.
If there exist $u,v\in V$ such that $uv\notin E$ and $\lambda(u)=\alpha$, $\lambda(v)=\beta$, 
then $x\mto y$ or $x\mfrom y$ or $x\mtom y$.
\end{lemma}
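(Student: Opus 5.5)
We must show the edge $xy$ does not carry the neutral marker $x\mline y$, i.e., that $xy$ lies in some twin triple or is a diamond chord. The plan is to argue by contradiction: assume no twin triple includes $xy$. This means that every vertex of label $\alpha$ is adjacent to $y$, since otherwise $(y,x,t)$ would be a twin triple. Symmetrically, every vertex of label $\beta$ is adjacent to $x$, since otherwise $(x,y,t)$ would be a twin triple.

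We then focus on the given non-adjacent pair $u,v$. Since $u$ has label $\alpha$ and $v$ has label $\beta$, the previous paragraph gives $uy\in E$ and $vx\in E$. By \clique, we also have $ux\in E$ when $u\ne x$, and $vy\in E$ when $v\ne y$.

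We next dispose of the degenerate cases. If $u=x$, then $vx\in E$ contradicts $uv\notin E$. Similarly, if $v=y$, then $uy\in E$ contradicts $uv\notin E$. Hence $u\ne x$ and $v\ne y$, so $x,y,u,v$ are four distinct vertices.

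Among these four vertices the edges are $xy,xu,xv,yu,yv$, and $uv$ is the only missing edge. So $\{x,u,y,v\}$ induces a diamond ($K_4-e$) whose degree-$3$ vertices are $x$ and $y$; its chord is therefore $xy$. All labels are finite, with $\lambda(x)=\lambda(u)=\alpha<\beta=\lambda(y)=\lambda(v)$. Hence $(x,u,y,v)$ is a diamond ring with chord $xy$.

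By the definition of side-markers, a diamond chord carries $x\mtom y$. This contradicts the assumption $x\mline y$. (In fact, the same argument shows that if no twin triple contains $xy$, then $xy$ is a diamond chord.)

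The only delicate point is verifying that the four vertices are distinct and that the chord is exactly $xy$. Both are handled above, so no substantial obstacle is expected.
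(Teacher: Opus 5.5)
Your proof is correct and is essentially the paper's argument. The paper shows that if neither $(x,y,v)$ nor $(y,x,u)$ is a twin triple, then $xv,uy\in E$. Hence $(x,u,y,v)$ is a diamond ring with chord $xy$, and the edge carries $x\mtom y$. Your explicit check that $u\neq x$ and $v\neq y$ is a small gain in rigor, since the paper applies \clique\ to $xu$ and $yv$ and leaves these degenerate cases implicit.
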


\begin{proof}
By \clique, $xu\in E$ and $yv\in E$.
If $xv\notin E$, then $(x,y,v)$ is a twin triple, which would force $x\mto y$. So we may assume $xv\in E$.
If $uy\notin E$, then $(y,x,u)$ is a twin triple, which would force $y\mto x$. Therefore we may assume $uy\in E$.
Thus $(x,u,y,v)$ is a diamond ring, so $x\mtom y$ follows by the definition of the marker arrows.
\end{proof}

\begin{proposition}\label{prop:correctness}
For all distinct $u,v\in V$, if $S_u\cap S_v\neq\varnothing$, then $uv\in E$.
\end{proposition}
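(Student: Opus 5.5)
Fix distinct $u,v\in V$ and an interval $I\in S_u\cap S_v$. The plan is a case analysis on the type of $I$ in $\mathcal J$: a private tag, a gap marker $M_c$, a peripheral interval $J(a)$, or a central interval $J(pq)$. For each type we first determine exactly which sets $S_x$ contain $I$, and then show that these vertices $x$ are pairwise adjacent. The relevant facts are the definitions of $G_x$ and $Z_x$: $Z_x$ contains only central intervals, the private tag $T_x$, and, for extreme $x$, $J(x)$. We also use the observation that a central interval lies in some $Z_x$ only when $x$ is an endpoint of the corresponding edge and the side-marker points toward $x$.

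\emph{Private tags and gap markers.} A private tag $T_x$ lies only in $S_x$, so it cannot be the common interval. A gap marker $M_c$ is never a borderline interval. It lies in $G_x$ exactly when $M_c\supseteq M_{\lambda(x)}$, that is, when $\lambda(x)=c$. Hence $\lambda(u)=\lambda(v)=c$ is finite, and $uv\in E$ by \clique.

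\emph{Peripheral intervals.} If $I=J(a)$ with $\lambda(a)=\pm\infty$, then Lemma~\ref{lem:periph-membership} gives $u,v\in\{a\}\cup N(a)$. If one of them equals $a$ we are done, so assume both are neighbours of $a$. By \independence, both $\lambda(u)$ and $\lambda(v)$ are finite. If the labels are equal, \clique\ gives $uv\in E$. Otherwise, say $\lambda(a)=-\infty$ and $\lambda(u)<\lambda(v)$. Then $\lambda(a)<\lambda(u)<\lambda(v)$ together with $av\in E$ yields $uv\in E$ by \betweenness. The case $\lambda(a)=+\infty$ is the reflection: use the edge $ua$ and $\lambda(u)<\lambda(v)<\lambda(a)$.

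\emph{Central intervals.} This is the main case. Let $I=J(pq)$ with $\alpha=\lambda(p)<\beta=\lambda(q)$. Since $J(pq)\subseteq[\alpha-1,\beta]$, the sets containing $J(pq)$ are the following:
\begin{enumerate}
\item[(i)] $S_p$ and $S_q$;
\item[(ii)] $S_x$ for every $x$ with $\alpha<\lambda(x)<\beta$;
\item[(iii)] $S_w$ for every $w$ with $\lambda(w)=\alpha$, but only when $M_\alpha\subseteq J(pq)$, i.e.\ the marker does not point toward $p$;
\item[(iv)] $S_w$ for every $w$ with $\lambda(w)=\beta$, but only when $M_\beta\subseteq J(pq)$.
\end{enumerate}
Vertices of types (i) and (ii) form a clique by Lemma~\ref{lem:between-cliquex}.

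Now consider a type (iii) vertex $w\ne p$. The marker on $pq$ is then either $p\mto q$ or $p\mline q$. In the first case Lemma~\ref{lem:arrow-universalx} gives $wq\in E$. In the second case the contrapositive of Lemma~\ref{lem:unmarked-diamondx} shows that every $\alpha$-labeled vertex is adjacent to every $\beta$-labeled vertex, so again $wq\in E$. With $wq\in E$ in hand, \clique\ gives $wp\in E$, and \betweenness\ applied to the edge $wq$ gives adjacency to every type (ii) vertex. Type (iv) vertices are handled by reflection.

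If both type (iii) and type (iv) vertices occur, then $J(pq)=[\alpha-1,\beta]$, so the marker is $p\mline q$. In that situation Lemma~\ref{lem:unmarked-diamondx} again makes all $\alpha$-vertices adjacent to all $\beta$-vertices.

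The main obstacle is this last case. It requires an exact description of when $J(pq)\in G_x$ and when $J(pq)\in Z_x$ in terms of the four marker types, and a check that each of (iii) and (iv) is forced by the correct marker configuration. Once that bookkeeping is in place, the adjacency claims follow from the lemmas cited above.
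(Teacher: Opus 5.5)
Your proof is correct and follows the paper's argument. It uses the same case split on the type of the common interval. For a central interval $J(pq)$, it shows, as the paper does, that all vertices whose sets contain that interval form a clique, using Lemma~\ref{lem:between-cliquex}, Lemma~\ref{lem:arrow-universalx}, Lemma~\ref{lem:unmarked-diamondx} and \betweenness. The differences are cosmetic: you organize the central case by vertex types (i)--(iv) rather than by the four marker types, and you apply the contrapositive of Lemma~\ref{lem:unmarked-diamondx} directly where the paper reruns its proof to exhibit a diamond chord.
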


\begin{proof}
Pick $I\in S_u\cap S_v$.
If $I$ is a private tag, then $I$ belongs to exactly one set $S_x$, so it cannot
belong to $S_u\cap S_v$ for distinct $u,v$.  Thus $I$ is one of the intervals
considered in the cases below.

\smallskip
\noindent\textsc{Case 1:} $I=M_c$ for some $2\le c\le m-2$.
Then $M_c=[c-1,c]\in S_u\cap S_v$ implies $\lambda(u)=\lambda(v)=c$, hence $uv\in E$ by \clique.

\smallskip
\noindent\textsc{Case 2:} $I=J(a)$ is a peripheral interval, where $\lambda(a)=\pm\infty$ and
$J(a)\in S_u\cap S_v$.
If $u=a$ or $v=a$, then $S_a=\{J(a)\}$ and Lemma~\ref{lem:periph-membership} yields
$uv\in E$.
Otherwise, Lemma~\ref{lem:periph-membership} gives $au,av\in E$ and $\lambda(u),\lambda(v)$
finite. By symmetry, we may assume $\lambda(u)\le\lambda(v)$ and $\lambda(a)=-\infty$. 
If $\lambda(u)=\lambda(v)$ then $uv\in E$ by \clique; otherwise, $\lambda(u)<\lambda(v)$, so
$uv\in E$ by \betweenness\ applied to the edge $av$ with the
vertex $u$.

\smallskip
\noindent\textsc{Case 3:} $I=J(xy)$ is a central interval, where $xy\in E$ and
$\lambda(x)=\alpha<\beta=\lambda(y)$ are finite. Set
$$
W(I)=\{w\in V:\ I\in S_w\}.
$$
It suffices to show that $W(I)$ induces a clique, because $u,v\in W(I)$.

The set of vertices $z\in W(I)$ for which $\alpha<\lambda(z)<\beta$ induces a clique, and each of them is adjacent to both $x$ and $y$ by
Lemma~\ref{lem:between-cliquex}.

Subcase 3a: $x\mtom y$.
Then $I=[\alpha,\beta-1]$, so $I$ contains neither endpoint gap $[\alpha-1,\alpha]$ nor
$[\beta-1,\beta]$. Hence no vertex of label $\alpha$ or $\beta$ can contain $I$ via its
gap set.

Consequently, if $w\in W(I)$ has label $\alpha$ or $\beta$, then necessarily $I\in Z_w$.
Since $I=J(xy)$ is a central interval, it follows that 
$w\in\{x,y\}$ by the definition of $Z_w$. Therefore the only vertices of
labels $\alpha$ and $\beta$ in $W(I)$ are $x$ and $y$ themselves. All other vertices
in $W(I)$ have labels strictly between $\alpha$ and $\beta$, hence $ W(I)\subset B(x,y)$. By Lemma~\ref{lem:between-cliquex}, $W(I)$ is a
clique.

Subcase 3b: $x\mto y$.
Then $I=[\alpha-1,\beta-1]$. Thus vertices of label $\alpha$ may contain $I$ via their gap
sets, but no vertex of label $\beta$ can contain $I$ via its gap set, since
$[\beta-1,\beta]\nsubseteq I$.

Hence, if $w\in W(I)$ has label $\beta$, then $I\in Z_w$. As above, since $I=J(xy)$, the
definition of $Z_w$ implies $w\in\{x,y\}$, and therefore $w=y$. So $y$ is the only
$\beta$-labeled vertex in $W(I)$.

Let $A=\{a\in W(I)\setminus\{x\}:\lambda(a)=\alpha\}$, $B=B(x,y)\cap W(I)$. Because the only vertex with label $\beta$ is $y$, it follows that $W(I)=A\dot\cup B$. By Lemma~\ref{lem:between-cliquex}, $B$ induces a clique, and by \clique, $A\cup\{x\}$ induces a clique, so it suffices to show that $A\subset N(y)$, and $az\in E$ for all $a\in A$ and $z\in B\setminus\{x,y\}$.

Consider an arbitrary $a\in A$. Lemma~\ref{lem:arrow-universalx} implies $a\in N(y)$.
Consider an arbitrary $z\in B\setminus\{x,y\}$. Apply \betweenness\ to the edge $ay$ with $z$ to conclude $az\in E$.

Subcase 3c: $x\mfrom y$.
This is symmetric to Subcase~3b.

Subcase 3d: $x\mline y$.
Then $I=[\alpha-1,\beta]$ contains both endpoint gaps $[\alpha-1,\alpha]$ and $[\beta-1,\beta]$.
Hence every vertex of label $\alpha$ or $\beta$ contains $I$ via its gap set and therefore
belongs to $W(I)$.

Suppose there exist vertices $u'$ and $v'$ with $\lambda(u')=\alpha$, $\lambda(v')=\beta$,
and $u'v'\notin E$. Since the edge $xy$ carries no one-way side-marker (it carries $x\mline y$),
the proof of Lemma~\ref{lem:unmarked-diamondx} forces $(x,u',y,v')$ to be a diamond ring.
In particular, $xy$ is a diamond chord, so by the definition of the marker arrows the edge $xy$
must carry the marker $x\mtom y$, contradicting $x\mline y$. Therefore every $\alpha$-vertex is
adjacent to every $\beta$-vertex.

Now let $u_\alpha\in W(I)$ have label $\alpha$, let $v_\beta\in W(I)$ have label $\beta$, and
let $z\in W(I)$ with $\alpha<\lambda(z)<\beta$. Since $u_\alpha v_\beta\in E$ and
$\lambda(u_\alpha)<\lambda(z)<\lambda(v_\beta)$, Lemma~\ref{lem:between-cliquex} (applied to the
edge $u_\alpha v_\beta$) yields $u_\alpha z\in E$ and $zv_\beta\in E$. Together with
the clique on interior vertices and the clique on each label class,  we conclude
that $W(I)$ induces a clique.

\smallskip\noindent
In all subcases, $W(I)$ is a clique. Hence $u,v\in W(I)$ implies $uv\in E$, completing the proof.
\end{proof}
\begin{theorem}\label{thm:representation}
	A connected  track-labeled bi-fork-free graph is a Hilbert fragment.
\end{theorem}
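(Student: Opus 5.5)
The plan is to assemble the pieces built in this section and pass from the rational family $\mathcal J$ to a genuine interval order through augmentation. Let $H=(V,E)$ be connected, track-labeled by a bi-fork-free $\lambda$. If $|V|=1$, the graph is trivially a Hilbert fragment, so assume $|V|\ge 2$. After translation and scaling, the labels are loose positive integers between $2$ and $m-2$. Build the multiset $\mathcal J$ (gap markers, peripheral intervals, central intervals, private tags) and the sets $S_x=G_x\cup Z_x$ for $x\in V$. Since $\mathcal J$ is a finite multiset of compact rational intervals, it is an interval representation of some interval order $P=(X,\prec)$, obtained by taking one element per member of the multiset.

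The key steps, in order, are these. First, by Corollary~\ref{cor:fundamental} and Lemma~\ref{lem:Hilbert}, each $S_x$ is an irreducible fundamental extender with respect to $\mathcal J$. Second, augment $\mathcal J$ to $\mathcal J^+$, the canonical representation of $P^+$. By Lemma~\ref{lem:augmentation}, each $S_x^+$ is then a Hilbert set of $P^+$. Third, define $\varphi(x)=S_x^+$. The map $I\mapsto I^+$ is a bijection between members of $\mathcal J$ and the non-added members of $\mathcal J^+$, so it preserves intersections of subfamilies. Hence $S_x^+\cap S_y^+\neq\varnothing$ if and only if $S_x\cap S_y\ne\varnothing$. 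By Proposition~\ref{prop:intersection} and Proposition~\ref{prop:correctness}, this holds if and only if $xy\in E$, for distinct $x,y$.

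Two points remain to check. The first is injectivity of $\varphi$, i.e.\ that the $S_x$ are pairwise distinct. A finite-labeled $x$ has its private tag $T_x$, which lies in no other set. Two extreme vertices $z\ne z'$ have $S_z=\{J(z)\}$ and $S_{z'}=\{J(z')\}$. These are distinct members of the multiset even if their endpoints coincide, and distinctness as subfamilies is exactly what is required. An extreme $S_z$ has size one and contains no private tag, whereas a finite $S_x$ contains $M_{\lambda(x)}$ and $T_x$, so these also differ. The second point is that the realization definition requires adjacency exactly when $\varphi(u)\cap\varphi(v)\neq\varnothing$, which is what the third step gives. Hence $(P^+,\varphi)$ realizes $H$ as an induced subgraph of $\mathcal H_{P^+}$.

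The main obstacle is the bookkeeping in the augmentation step. Members of $\mathcal J$ with identical endpoints, such as $J(de)$ and $J(df)$, or coincident private tags for distinct vertices with the same label and side, must remain distinct elements of $P$ and hence of $P^+$. I would handle this by indexing the multiset explicitly, with one ground element per witness, so that the correspondence $I\mapsto I^+$ is a bijection and both irreducibility and intersections transfer verbatim.
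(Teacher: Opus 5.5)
Your proposal is correct and follows the paper's proof essentially step for step: irreducibility of each $S_x$ from Corollary~\ref{cor:fundamental} and Lemma~\ref{lem:Hilbert}, transfer to Hilbert sets of $P^+$ via Lemma~\ref{lem:augmentation}, and the equivalence between edges and intersections from Propositions~\ref{prop:intersection} and~\ref{prop:correctness}, carried through the bijection $I\mapsto I^+$. Your separate injectivity check for extreme-labeled vertices is a small point the paper passes over with ``by the private tags'': such vertices receive no private tag, yet their singleton sets $\{J(z)\}$ remain distinct because the $J(z)$ are distinct members of the multiset.
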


\begin{proof}
	Suppose that $(H,\lambda)$ is a connected bi-fork-free track-labeled graph.
	Let $\mathcal J$ be the multiset of intervals constructed from $(H,\lambda)$
	in Sections~\ref{sub:witness} and~\ref{sub:fundamental-extender-reps}, including 
	the peripheral and central intervals, the gap markers, and the private tags;
	and let $P$ be the interval order represented by $\mathcal J$.  The intervals
	in $\mathcal J$ have integer, hence rational, endpoints.  In particular,
	$\mathcal J$ is a rational representation of $P$.
	
	For each $x\in V$, let $S_x$ be the subfamily of $\mathcal J$ defined in
	Section~\ref{sub:fundamental-extender-reps}.  By Proposition~\ref{prop:intersection}, every edge
	$uv\in E$ satisfies
	$$
	S_u\cap S_v\ne\varnothing,
	$$
	and by Proposition~\ref{prop:correctness}, every intersection
	$$
	S_u\cap S_v\ne\varnothing
	$$
	for distinct vertices $u,v$ implies $uv\in E$.  Thus the indexed
	intersection graph of the family $\{S_x:x\in V\}$ is isomorphic to $H$.
	
	By Corollary~\ref{cor:fundamental}, each $S_x$ is a fundamental extender with
	respect to $\mathcal J$.  By Lemma~\ref{lem:Hilbert}, each $S_x$ is
	irreducible with respect to $\mathcal J$.  Let $P^+$ and $\mathcal J^+$ be
	the augmentation determined by $P$ and $\mathcal J$.  Lemma~\ref{lem:augmentation}
	implies that
	$$
	S_x^+=\{I^+:I\in S_x\}
	$$
	is a Hilbert set of $P^+$ for each $x\in V$.
	
	The affine map used in the augmentation gives a bijection from $\mathcal J$ to
	the original-interval subfamily of $\mathcal J^+$.  Hence, for all distinct
	$u,v\in V$,
	$$
	S_u\cap S_v\ne\varnothing
	\quad\text{if and only if}\quad
	S_u^+\cap S_v^+\ne\varnothing.
	$$
	By the private tags, the Hilbert sets $S_x^+$ are pairwise distinct.  Hence
	the map
	$$
	x\mapsto S_x^+
	$$
	is injective from $V$ into $V(\mathcal H_{P^+})$ and satisfies
	$$
	uv\in E
	\quad\text{if and only if}\quad
	S_u^+\cap S_v^+\ne\varnothing.
	$$
	Therefore $H$ is isomorphic to an induced subgraph of $\mathcal H_{P^+}$, and
	so $H$ is a Hilbert fragment.
\end{proof}

Figure~\ref{fig:running-example-side-twin} displays the twin-arcs digraph with the side markers for the track-labeled graph in Figure~\ref{fig:running-example-graph}. 
The table below lists the representing Hilbert sets, their gap--borderline decompositions, and their explicit interval contents.
\begingroup
\renewcommand{\arraystretch}{1.2}
$$
\begin{array}{r@{\;}c@{\;}l@{\quad}c@{\;}l}
	S_x &=& \varnothing\cup\{J(x)\}
	&=& \{J(x)\}\\
	S_y &=& \varnothing\cup\{J(y)\}
	&=& \{J(y)\}\\
	S_a &=& G[1,2]\cup\{T_a\}
	&=& \{M_2,J(x),J(y),J(ab),J(ac),T_a\}\\
	S_b &=& G[3,4]\cup\{T_b\}
	&=& \{M_4,J(y),J(ab),J(ac),J(bc),T_b\}\\
	S_c &=& G[5,6]\cup\{J(ac),J(bc),T_c\}
	&=& \{M_6,J(ac),J(bc),T_c\} \\
	S_d &=& G[5,6]\cup\{J(de),J(df),T_d\}
	&=& \{M_6,J(de),J(df),T_d\}\\
	S_e &=& G[7,8]\cup\{T_e\}
	&=& \{M_8,J(de),J(df),J(eg),T_e\}\\
	S_f &=& G[7,8]\cup\{J(fg),J(fh),T_f\}
	&=& \{M_8,J(de),J(df),J(eg), J(fg),J(fh),T_f\}\\
	S_g &=& G[9,10]\cup\{J(eg),J(fg),T_g\}
	&=& \{M_{10},J(fh),J(z),J(eg),J(fg),T_g\}\\
	S_h &=& G[9,10]\cup\{T_h\}
	&=& \{M_{10},J(fh),J(z),T_h\}\\
	S_z &=& \varnothing\cup\{J(z)\}
	&=& \{J(z)\}.
\end{array}
$$
\endgroup

\begin{figure}[htp]
	\centering
	\begin{tikzpicture}[x=.78cm,y=.45cm]
		
		\tikzset{
			endpoint/.style={font=\scriptsize},
			interval/.style={line width=.9pt},
			gapinterval/.style={line width=.7pt},
			tag/.style={circle,fill=black,inner sep=1.3pt},
			intervalname/.style={font=\scriptsize,anchor=west},
			tagname/.style={font=\scriptsize,anchor=south}
		}
		
		\foreach \x in {0,...,11}{
			\draw[gray!35,line width=.25pt] (\x,-0.5) -- (\x,8.4);
			\node[endpoint] at (\x,-0.95) {$\x$};
		}
		
		\draw[gapinterval] (0,.55) -- (1,.55);
		\node[font=\scriptsize] at (.5,.18) {$M_1$};
		
		\draw[gapinterval] (1,.68) -- (2,.68);
		\node[font=\scriptsize] at (1.5,.31) {$M_2$};
		
		\draw[gapinterval] (2,.55) -- (3,.55);
		\node[font=\scriptsize] at (2.5,.18) {$M_3$};
		
		\draw[gapinterval] (3,.68) -- (4,.68);
		\node[font=\scriptsize] at (3.5,.31) {$M_4$};
		
		\draw[gapinterval] (4,.55) -- (5,.55);
		\node[font=\scriptsize] at (4.5,.18) {$M_5$};
		
		\draw[gapinterval] (5,.68) -- (6,.68);
		\node[font=\scriptsize] at (5.5,.31) {$M_6$};
		
		\draw[gapinterval] (6,.55) -- (7,.55);
		\node[font=\scriptsize] at (6.5,.18) {$M_7$};
		
		\draw[gapinterval] (7,.68) -- (8,.68);
		\node[font=\scriptsize] at (7.5,.31) {$M_8$};
		
		\draw[gapinterval] (8,.55) -- (9,.55);
		\node[font=\scriptsize] at (8.5,.18) {$M_9$};
		
		\draw[gapinterval] (9,.68) -- (10,.68);
		\node[font=\scriptsize] at (9.5,.31) {$M_{10}$};
		
		\draw[gapinterval] (10,.55) -- (11,.55);
		\node[font=\scriptsize] at (10.5,.18) {$M_{11}$};
		
		\draw[interval] (0,7.85) -- (2,7.85);
		\node[intervalname] at (2.08,7.85) {$J(x)$};
		
		\draw[interval] (0,7.35) -- (4,7.35);
		\node[intervalname] at (4.08,7.35) {$J(y)$};
		
		\draw[interval] (1,6.85) -- (4,6.85);
		\node[intervalname] at (4.08,6.85) {$J(ab)$};
		
		\draw[interval] (1,6.35) -- (5,6.35);
		\node[intervalname] at (5.08,6.35) {$J(ac)$};
		
		\draw[interval] (3,5.85) -- (5,5.85);
		\node[intervalname] at (5.08,5.85) {$J(bc)$};
		
		\draw[interval] (6,7.85) -- (8,7.85);
		\node[intervalname] at (8.08,7.85) {$J(de)$};
		
		\draw[interval] (6,7.35) -- (8,7.35);
		\node[intervalname] at (8.08,7.35) {$J(df)$};
		
		\draw[interval] (7,6.85) -- (9,6.85);
		\node[intervalname] at (9.08,6.85) {$J(eg)$};
		
		\draw[interval] (8,6.35) -- (9,6.35);
		\node[intervalname] at (9.08,6.35) {$J(fg)$};
		
		\draw[interval] (8,5.85) -- (10,5.85);
		\node[intervalname] at (10.08,5.85) {$J(fh)$};
		
		\draw[interval] (9,5.35) -- (11,5.35);
		\node[intervalname] at (11.08,5.35) {$J(z)$};
		
		\node[tag,label={[tagname]above:$T_a$}] at (1,2.95) {};
		\node[tag,label={[tagname]above:$T_b$}] at (3,2.95) {};
		\node[tag,label={[tagname]above:$T_c$}] at (5,2.95) {};
		\node[tag,label={[tagname]above:$T_d$}] at (6,2.95) {};
		\node[tag,label={[tagname]above:$T_e$}] at (7,2.95) {};
		\node[tag,label={[tagname]above:$T_f$}] at (8,2.95) {};
		\node[tag,label={[tagname]above:$T_g$}] at (9,2.95) {};
		\node[tag,label={[tagname]above:$T_h$}] at (10,2.95) {};
		
		\node[font=\scriptsize,anchor=east] at (-.15,.62) {gap markers};
		\node[font=\scriptsize,anchor=east] at (-.15,6.85) {witnesses};
		\node[font=\scriptsize,anchor=east] at (-.15,2.95) {private tags};
		
	\end{tikzpicture}
	\caption{The intervals in the multiset $\mathcal J$ of the track-labeled graph in Figure~\ref{fig:running-example-graph}.}
	\label{fig:running-example-intervals}
\end{figure}
The intervals in these representing sets are depicted in Figure~\ref{fig:running-example-intervals}.
By Proposition~\ref{prop:canonical}, a canonical representation 
is obtained  by
adding the four singleton intervals $[0,0]$, $[2,2]$, $[4,4]$, and $[11,11]$ to the representation shown
in Figure~\ref{fig:running-example-intervals}. 
This is the canonical representation of an interval order with 34 elements and magnitude 12. The associated 
Hilbert extender graph $\mathcal{H}$ 
has $136$ vertices and $3,348$ edges. 
The eleven Hilbert sets in the table above correspond to eleven vertices that induce a subgraph of $\mathcal{H} $ isomorphic to the graph in Figure~\ref{fig:running-example-graph}.

\section{Eliminating bi-forks}
\label{sec:eliminate}

The goal of this section is to show that any connected track-labeled graph with bi-forks can be transformed into a bi-fork-free track-labeled graph.  
The transformation is achieved by a finite sequence of small changes to the labeling, called tectonic $\epsilon$-shifts.  
These shifts preserve the track-axioms while reducing the bi-fork structure.  
We begin by defining tectonic $\epsilon$-shifts and proving that they preserve track-labelings.

Let $H=(V,E)$ be a connected track-labeled graph with track-labeling $\lambda$. Fix a finite  label $\beta$ and let $D(\beta)=(B,\mathcal{A})$ be the $\beta$-kernel digraph, the
previously introduced subdigraph of the twin-arcs digraph.

\begin{figure}
\centering
\begin{subfigure}[t]{0.48\textwidth}\centering
\begin{tikzpicture}[scale=.5,x=1cm,y=1cm,>=Stealth]
\def\W{8} 
\def\H{4} 

\begin{scope}[xshift=3.5cm,yshift=1.5cm,scale=3]
\draw[line width=.1em, dotted] plot[smooth cycle,tension=.85]
    coordinates {(-1.1,0) (-0.4,-1.0) (0.9,-0.7) (1.2,0.3) (0.4,1.1) (-0.7,0.8)};
\end{scope}
    
\coordinate (s) at (-1,0.5*\H);
\coordinate (p) at (0.55*\W,0.55*\H);

\draw[red,very thick,->] ($(s)+(-0.7,0.9)$) -- ($(p)+(-1.73,0.1)$);
\draw[red,very thick,->] ($(s)+(-.7,-0.9)$) to [bend right=.5cm] ($(p)+(-1.9,-1.6)$);
\draw[blue, very thick,<-] ($(s)+(-.7,-0.9)$) to [bend left =.5cm] ($(p)+(-1.9,-1.6)$);

\foreach \k/\y in {1/0.25,3/0.75}{
  \coordinate (r\k) at (-2+\W,\y*\H);     
  \coordinate (t\k) at (\W+1.2,\y*\H); 
  \draw[blue,very thick,->] (r\k) -- (t\k);
}\draw[blue,very thick,->] (5.75,1.3) to [bend right=.4cm]  (9.9,2.5);
\draw[red,very thick,<-] (5.75,1.3) to [bend left=.4cm] (9.9,2.5);

\node[draw,yshift=-1.1cm,circle,inner sep=2pt,scale=1.8] at (0.5*\W,0.5*\H) {$-$};
\end{tikzpicture}
\caption{negative tectonic: red in, blue out.}
\label{fig:-tectonic}
\end{subfigure}\hfill
\begin{subfigure}[t]{0.48\textwidth}\centering
\begin{tikzpicture}[scale=.5,x=1cm,y=1cm,>=Stealth]
\def\W{8} 
\def\H{4} 

\begin{scope}[xshift=3.5cm,yshift=2cm,scale=3]
\draw[line width=.1em, dotted] plot[smooth cycle,tension=.85]
   coordinates {(-1.1,0) (-0.4,-1.0) (0.9,-0.7) (1.2,0.3) (0.4,1.1) (-0.7,0.8)};
\end{scope}

\coordinate (s) at (-1,0.5*\H);
\coordinate (p) at (0.55*\W,0.55*\H);

\draw[red,very thick,<-] ($(s)+(-1,0.9)$) to [bend right=.5cm] ($(p)+(-1.8,0.36)$);
\draw[blue,very thick,->] ($(s)+(-1,0.9)$) to [bend left=.5cm]  ($(p)+(-1.8,0.36)$);
\draw[blue,very thick,->] ($(s)+(-.7,-0.9)$) -- ($(p)+(-1.9,-0.6)$);

\foreach \k/\y in {1/0.25,3/0.75}{
  \coordinate (r\k) at (-2+\W,\y*\H);     
  \coordinate (t\k) at (\W+1.2,\y*\H); 
  \draw[red,very thick,->] (r\k) -- (t\k);
}
\node[draw,circle,yshift=0.75cm,inner sep=2pt,scale=1.8] at (0.5*\W,0.5*\H) {$+$};
\end{tikzpicture}
\caption{positive tectonic: blue in, red out.}
\label{fig:+tectonic}
\end{subfigure}

\caption{Tectonic sets}
\label{fig:xi-tectonic}
\end{figure}

A nonempty proper subset
$U\subsetneq B$ is a \emph{negative tectonic set} if every arc from $U$ to
$B\setminus U$ is blue and every arc from $B\setminus U$ to $U$ is red.
Similarly, $U$ is a \emph{positive tectonic set} if every arc from $U$ to
$B\setminus U$ is red and every arc from $B\setminus U$ to $U$ is blue
(see Figure~\ref{fig:xi-tectonic}).  A \emph{$\beta$-tectonic set} is a subset of $B$ that 
is either positive tectonic or negative tectonic in $D(\beta)$.  
When the label $\beta$ is clear from context, we simply call such a set \emph{tectonic}.
  

Suppose that $U$ is a $\beta$-tectonic set.  For rational $\epsilon>0$, the
\emph{simultaneous $\epsilon$-shift of $U$} is the perturbed labeling
$\lambda_\epsilon$ defined by
$$
\lambda_\epsilon(v)=
\begin{cases}
	\lambda(v)+\epsilon, & \text{if } v\in U \text{ and } U \text{ is positive},\\
	\lambda(v)-\epsilon, & \text{if } v\in U \text{ and } U \text{ is negative},\\
	\lambda(v), & \text{if } v\notin U.
\end{cases}
$$
This shift is called the \emph{$U$-tectonic $\epsilon$-shift}, with its direction
determined by the sign of~$U$.
All labels in the tectonic set~$U$ rise together if the sign of~$U$ is positive, and fall together if it is negative.
 If $U$ is both positive and negative, 
then either sign may be assigned to $U$. 

\begin{proposition}\label{prop:tectonic-feasible}
	Suppose that $H=(V,E)$ is a connected track-labeled graph with track-labeling
	$\lambda$,  $\beta$ is a finite label, and $U$ is a $\beta$-tectonic set.  For sufficiently
	small rational $\epsilon>0$, the $U$-tectonic $\epsilon$-shift $\lambda_\epsilon$
	is a track-labeling of $H$.
\end{proposition}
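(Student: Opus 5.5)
The plan is to take $\epsilon$ so small that the shift cannot change any strict comparison between originally distinct labels. Then check the five track-axioms one by one, noting that only \betweenness\ can fail and only in configurations that would create a forbidden twin-arc across $U$.

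\emph{Choice of $\epsilon$.} Let $\delta>0$ be the minimum difference between two distinct finite labels of $\lambda$, and take rational $0<\epsilon<\delta/2$. Then for vertices $p,q$ with finite labels and $\lambda(p)<\lambda(q)$ we still have $\lambda_\epsilon(p)<\lambda_\epsilon(q)$. Moreover $\lambda_\epsilon(p)=\lambda_\epsilon(q)$ implies $\lambda(p)=\lambda(q)$. Extreme labels are untouched, since $U\subseteq B$ consists of $\beta$-labeled vertices. Consequently the only new strict inequalities are between two vertices originally labeled $\beta$: one lying in $U$, the other not in $U$ (the latter possibly outside $B$).

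\emph{The easy axioms.} \clique\ holds because equal finite $\lambda_\epsilon$-labels force equal $\lambda$-labels. \independence\ holds because the set of extreme-labeled vertices is unchanged. \path\ holds because $\lambda_\epsilon(u)=\lambda_\epsilon(v)$ gives $\lambda(u)=\lambda(v)$, so $\lambda(x)$ is finite by \path\ for $\lambda$. \claw\ holds because every induced claw already has two leaves with a common extreme $\lambda$-label, and those labels are not moved.

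\emph{\betweenness, the main step.} Suppose $\lambda_\epsilon(a)<\lambda_\epsilon(b)<\lambda_\epsilon(c)$ and $ac\in E$. If $\lambda(a),\lambda(b),\lambda(c)$ are pairwise distinct, the original axiom applies. Otherwise some equality held among the original labels, and by the choice of $\epsilon$ the equal ones equal $\beta$.

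If $\lambda(a)=\lambda(c)=\beta$, then $\lambda(b)=\beta$ as well, and \clique\ gives $ab,bc\in E$.

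Suppose instead $\lambda(a)=\lambda(b)=\beta<\lambda(c)$. Then $ab\in E$ by \clique. If $bc\notin E$, then $(c,a,b)$ is a twin triple, so there is a red arc $a\to b$, and $a,b\in B$. Since $\lambda_\epsilon(a)<\lambda_\epsilon(b)$, exactly one of two things holds.
\begin{enumerate}
\item[(i)] $U$ is negative, $a\in U$ and $b\notin U$. Then the red arc $a\to b$ leaves $U$, although arcs out of a negative tectonic set must be blue.
\item[(ii)] $U$ is positive, $b\in U$ and $a\notin U$. Then the red arc enters $U$, although arcs into a positive tectonic set must be blue.
\end{enumerate}
Both contradict the definition of a tectonic set, so $bc\in E$.

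The mirror case is $\lambda(a)<\beta=\lambda(b)=\lambda(c)$. Here $ab\notin E$ would give the twin triple $(a,c,b)$ and hence a blue arc $c\to b$. The possibilities are that $U$ is negative with $b\in U$ and $c\notin U$ (so the arc enters $U$ but is blue, not red), or that $U$ is positive with $c\in U$ and $b\notin U$ (so the arc leaves $U$ but is blue, not red). Both are again contradictions, so $ab\in E$.

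This exhausts all cases. The only real obstacle is this bookkeeping of arc colors against the sign of $U$. The point to verify carefully is that every violation of \betweenness\ produces a twin triple, and therefore an arc of $D(\beta)$ between $U$ and $B\setminus U$ of exactly the color the tectonic condition forbids. This works because the twin triple automatically places both $\beta$-labeled vertices in $B$.
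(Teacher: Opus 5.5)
Your proposal is correct and follows essentially the same route as the paper. You choose $\epsilon$ below half the spacing of finite labels (the global minimum spacing rather than the paper's $\Delta_\beta$, which works equally well), transfer the four axioms other than \betweenness\ directly, and show that any \betweenness\ violation produces a twin triple whose arc crosses the cut $(U,B\setminus U)$ with exactly the color the tectonic condition forbids. Your extra case $\lambda(a)=\lambda(c)=\beta$ is actually vacuous, since the shifted $\beta$-class takes only two values and nothing lies strictly between them, but handling it does no harm.
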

\begin{proof}
Let
$$
\Delta_\beta=\min\{|\alpha-\beta|:\alpha \text{ is a finite label used by $\lambda$ and } \alpha\ne\beta\},
$$
with the convention that $\Delta_\beta=+\infty$ if $\beta$ is the only finite label.
Choose a rational $\epsilon>0$ such that
$$
0<\epsilon<\frac{\Delta_\beta}{2}.
$$
Thus $\beta\pm\epsilon$ remains closer to $\beta$ than to any other finite label; the extreme labels impose no restriction.	
We show that $\lambda_\epsilon$ satisfies the five track-axioms.

By construction, $\lambda_\epsilon$ differs from $\lambda$ only on $U$, and there only by the
common additive term $+\epsilon$ or $-\epsilon$. Hence comparisons not involving a vertex of $U$ are unchanged.
Because $B$ induces a clique in $H$ and the choice of $\epsilon$ guarantees the new label ($\beta + \epsilon$ or $\beta - \epsilon$) is distinct from prior labels, 
the shift $\lambda_\epsilon$ satisfies \clique.
The shift does not change which vertices carry extreme labels, so \independence\
and \claw\ remain valid.  Moreover, the choice of $\epsilon$ creates no equality
between vertices whose original labels were distinct.  Thus, if
$\lambda_\epsilon(u)=\lambda_\epsilon(v)$, then $\lambda(u)=\lambda(v)$.  Since
$\lambda$ satisfies \path, every induced path $(x,u,v)$ with
$\lambda_\epsilon(u)=\lambda_\epsilon(v)$ has $\lambda(x)$ finite, and hence
$\lambda_\epsilon(x)$ finite.  Therefore \path\ is also preserved.

It remains to verify \betweenness.
Let $D(\beta)=(B,\mathcal{A})$ be the $\beta$-kernel digraph of the track-labeled graph $(H,\lambda)$.
Assume, to the contrary, that \betweenness\ fails for
$\lambda_\epsilon$.  This implies there exist $a,b,c$ such that
$$
\lambda_\epsilon(a)<\lambda_\epsilon(b)<\lambda_\epsilon(c),\qquad ac\in E,
$$
but $\{a,b,c\}$ does not induce a triangle in $H$. Since \betweenness\  holds for $\lambda$, we must have
$\lambda(a)=\lambda(b)$ or $\lambda(b)=\lambda(c)$.

We use the following \emph{elementary observation}.  Suppose $\lambda(x)=\lambda(y)$ but
$\lambda_\epsilon(x)\ne\lambda_\epsilon(y)$.  If either both $x,y$ lie in $U$ or neither lies in $U$, then the two labels are changed in the same way, and hence remain equal.  
Therefore exactly one of $x,y$ lies in $U$.  
Now assume, without loss of generality, that $x\in U$ and $y\notin U$.  If $U$ is positive, then
$$
\lambda_\epsilon(x)=\lambda(x)+\epsilon
\quad\text{and}\quad
\lambda_\epsilon(y)=\lambda(y).
$$
Since $\lambda(x)=\lambda(y)$ and $\epsilon>0$, it follows that
$$
\lambda_\epsilon(x)>\lambda_\epsilon(y).
$$
Thus the vertex in $U$ has the larger $\lambda_\epsilon$-value.  Symmetrically, if $U$ is negative, then 
the vertex in $U$ has the smaller $\lambda_\epsilon$-value.

\smallskip
\noindent\textsc{Case 1.} $\lambda(a)=\lambda(b)$.
Since $\lambda(a)=\lambda(b)$ but $\lambda_\epsilon(a)<\lambda_\epsilon(b)$, the elementary observation implies that
$\lambda(a)=\lambda(b)=\beta$ and exactly one of $a,b$ lies in $U$.
By \clique, $ab\in E$.  Since $ac\in E$ by hypothesis and $\{a,b,c\}$ does not
induce a triangle, we have $bc\notin E$.  Thus $(c,a,b)$ is an induced path.

We claim that $\lambda(c)>\beta$.  Indeed, \path\ applied to the induced path
$(c,a,b)$ shows that $\lambda(c)$ is finite.  Also $\lambda(c)\ne\beta$, since
otherwise \clique\ would imply $bc\in E$.  The choice of $\epsilon$ ensures that
no finite label distinct from $\beta$ lies within $2\epsilon$ of $\beta$.  Since
$\lambda_\epsilon(b)<\lambda_\epsilon(c)$ and $\lambda_\epsilon(b)$ differs from
$\beta$ by at most $\epsilon$, it follows that $\lambda(c)>\beta$.

Hence $(c,a,b)$ is a twin triple, so $a\to b$ is a red arc of $D(\beta)$.  If
$U$ is negative, then the elementary observation and
$\lambda_\epsilon(a)<\lambda_\epsilon(b)$ imply that $a\in U$; hence the red arc
$a\to b$ leaves $U$, contradicting that $U$ is negative tectonic.  If $U$ is
positive, then the elementary observation implies that $b\in U$; hence the red
arc $a\to b$ enters $U$, contradicting that $U$ is positive tectonic.

\smallskip
\noindent\textsc{Case 2.} $\lambda(b)=\lambda(c)$.
Since $\lambda(b)=\lambda(c)$ but $\lambda_\epsilon(b)<\lambda_\epsilon(c)$, the elementary observation implies that
$\lambda(b)=\lambda(c)=\beta$ and exactly one of $b,c$ lies in $U$.
By \clique, $bc\in E$.  Since $ac\in E$ by hypothesis and $\{a,b,c\}$ does not
induce a triangle, we have $ab\notin E$.  Thus $(a,c,b)$ is an induced path.

We claim that $\lambda(a)<\beta$.  Indeed, \path\ applied to the induced path
$(a,c,b)$ shows that $\lambda(a)$ is finite.  Also $\lambda(a)\ne\beta$, since
otherwise \clique\ would imply $ab\in E$.  The choice of $\epsilon$ ensures that
no finite label distinct from $\beta$ lies within $2\epsilon$ of $\beta$.  Since
$\lambda_\epsilon(a)<\lambda_\epsilon(b)$ and $\lambda_\epsilon(b)$ differs from
$\beta$ by at most $\epsilon$, it follows that $\lambda(a)<\beta$.

Hence $(a,c,b)$ is a twin triple, so $c\to b$ is a blue arc of $D(\beta)$.  If
$U$ is negative, then the elementary observation and
$\lambda_\epsilon(b)<\lambda_\epsilon(c)$ imply that $b\in U$; hence the blue arc
$c\to b$ enters $U$, contradicting that $U$ is negative tectonic.  If $U$ is
positive, then the elementary observation implies that $c\in U$; hence the blue
arc $c\to b$ leaves $U$, contradicting that $U$ is positive tectonic.

Both cases yield a contradiction, so $\lambda_\epsilon$ satisfies \betweenness.
Because it satisfies all five track-axioms, $\lambda_\epsilon$ is a track-labeling of $H$.
\end{proof}

\begin{proposition}\label{prop:tectonic-exists}
If $D(\beta)=(B,\mathcal{A})$ is a nontrivial $\beta$-kernel of the connected track-labeled graph $(H,\lambda)$, 
then $B$ contains a positive $\beta$-tectonic set and a negative $\beta$-tectonic set.
\end{proposition}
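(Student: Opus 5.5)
The plan is to reformulate tectonic sets as closed sets in one auxiliary digraph, so that the two existence claims become a single statement. Form $D'(\beta)$ on $B$ by keeping every red arc of $D(\beta)$ and reversing every blue arc. A nonempty proper $U\subsetneq B$ is negative tectonic exactly when no red arc leaves $U$ and no blue arc enters $U$. In $D'(\beta)$ this says precisely that no arc of $D'(\beta)$ leaves $U$, i.e.\ $U$ is out-closed. Dually, $U$ is positive tectonic iff no arc of $D'(\beta)$ enters $U$. Hence $U$ is negative tectonic iff $B\setminus U$ is positive tectonic. So it suffices to show that $D'(\beta)$ is \emph{not strongly connected}. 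Then a terminal strong component $U$ (nonempty, proper, out-closed) is negative tectonic, and $B\setminus U$ is positive tectonic.

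Next, translate arcs into neighbourhood conditions so that non-strong-connectivity can be attacked with \betweenness. For $v\in B$ put $N^+(v)=\{u\in N(v):\lambda(u)>\beta\}$ and $N^-(v)=\{u\in N(v):\lambda(u)<\beta\}$. All vertices of $B$ are pairwise adjacent by \clique, and twin-triple leaves have finite labels by \path. So for $w,v\in B$ there is a red arc $w\to v$ iff $N^+(w)\not\subseteq N^+(v)$, and a blue arc $w\to v$ iff $N^-(w)\not\subseteq N^-(v)$. Lemma~\ref{lem:kernel}(\ref{lem:kernel:no-biarrow}) then says that for every ordered pair we have $N^+(w)\subseteq N^+(v)$ or $N^-(w)\subseteq N^-(v)$. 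The nontriviality hypothesis gives a bi-fork $(x;y,z)$. By Lemma~\ref{lem:kernel}(\ref{lem:kernel:bi-fork-eye}) it extends to an eye-to-eye bi-cycle, so $|B|\ge 3$ and $D'(\beta)$ has arcs. In particular, non-strong-connectivity of $D'(\beta)$ really produces nonempty proper tectonic sets.

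The main obstacle is proving that $D'(\beta)$ is not strongly connected, i.e.\ finding a nonempty proper out-closed set. My first attempt is a potential argument. Let $p(v)=\max\{\lambda(u):u\in N^+(v)\}$ (set to $\beta$ if $N^+(v)=\varnothing$) and $q(v)=\min\{\lambda(u):u\in N^-(v)\}$. By \betweenness, $N^+(v)$ contains every vertex whose label lies strictly between $\beta$ and $p(v)$. So the sets $N^+(v)$ are nearly nested and differ only within a single top label class, and similarly for $N^-(v)$. I would then show that along any arc of $D'(\beta)$ the pair $(p,q)$ moves weakly in one fixed direction, e.g.\ $p$ weakly decreases along red arcs and $q$ weakly decreases along reversed blue arcs. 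I would also try to show that the extreme vertex of the eye-to-eye bi-cycle (the red eye $z$, which carries both a red out-arc and a blue out-arc) is strictly separated from some other vertex by this potential. The delicate part is ties within the top label class of $N^+(v)$ or bottom class of $N^-(v)$. There I would use \clique, \path\ and \claw\ (the neighbours in question are finite and pairwise non-adjacent within a class only in controlled ways) to rule out a directed cycle through all of $B$. If the potential argument fails, my fallback is to take $U$ to be the out-closure in $D'(\beta)$ of the red eye $z$ and show directly that it misses the blue eye $y$ or the centre $x$, using the bi-fork arcs and Lemma~\ref{lem:kernel}.
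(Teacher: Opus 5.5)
\textbf{Gap: the proposal never proves that $D'(\beta)$ is not strongly connected, and that is the whole content of the proposition.}

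Your reduction is correct and is the paper's own. Your $D'(\beta)$ is exactly the auxiliary digraph $G(\beta)$ used there, and a sink strong component is negative tectonic. Taking its complement as the positive tectonic set is a clean alternative to the paper's symmetric construction. So everything rests on showing that $D'(\beta)$ is not strongly connected, which your proposal does not do.

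The potential argument cannot supply this step as set up. Take $p$ and $q$ over finite labels only, since by \path\ all vertices of $B$ have the same $+\infty$-neighbours. Using Lemma~\ref{lem:kernel}(\ref{lem:kernel:no-biarrow}) you actually get that both $p$ and $q$ weakly decrease along \emph{every} arc of $D'(\beta)$. Hence they are constant on strong components, and they can signal separation only through a strict inequality.

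Ties occur in very small examples. Start from the configuration of Lemma~\ref{lem:kernel}(\ref{lem:kernel:bi-fork-eye}):
\begin{itemize}
\item $x,y,z$ labeled $\beta$;
\item $a$ labeled $\alpha<\beta$, adjacent to $x,z$;
\item $c$ labeled $\gamma>\beta$, adjacent to $x,y$.
\end{itemize}
Add $a'$ labeled $\alpha$, adjacent to $a,x,y,z$, and $c'$ labeled $\gamma$, adjacent to $c,x,y,z$. This is a valid track-labeling with $p\equiv\gamma$ and $q\equiv\alpha$ on $B=\{x,y,z\}$. Yet $D'(\beta)$ is acyclic there, with arcs $y\to x$, $x\to z$, $y\to z$.

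So the whole difficulty lies in the tie case, which you defer to unspecified uses of \clique, \path\ and \claw. Your fallback, that the out-closure of $z$ misses $y$, restates the goal rather than arguing it. Also, in the eye-to-eye bi-cycle it is the centre $x$, not the red eye $z$, that has out-arcs of both colours.

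The missing idea is the paper's Claim~2: no strong component of $D'(\beta)$ contains two vertices spanning a bi-cycle. The argument runs as follows.
\begin{enumerate}
\item Take a shortest directed cycle $v_1\to v_2\to\cdots\to v_k\to v_1$ in $D'(\beta)$ whose arc $v_k\to v_1$ joins a bi-cycle pair, i.e.\ red $v_k\to v_1$ and blue $v_1\to v_k$. Length $2$ is excluded by Lemma~\ref{lem:kernel}(\ref{lem:kernel:no-biarrow}), and length $3$ by a four-case check (Claim~1).
\item If every arc of the cycle is realized by a red arc, then $v_1$ is the centre of the bi-fork $(v_1;v_k,v_2)$.
\item Otherwise some $v_i$ has its incoming arc realized by a blue arc $v_i\to v_{i-1}$ and its outgoing arc by a red arc $v_i\to v_{i+1}$. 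Then $v_i$ is the centre of the bi-fork $(v_i;v_{i-1},v_{i+1})$.
\item In both cases Lemma~\ref{lem:kernel}(\ref{lem:kernel:bi-fork-eye}) makes the two eyes a bi-cycle. Its red arc is a chord giving a shorter cycle of the same kind, contradicting minimality.
\end{enumerate}
Apply this to the eyes $y,z$ of the eye-to-eye bi-cycle that nontriviality provides. They lie in different strong components, so $D'(\beta)$ has at least two components and the sink component is proper. This is exactly the step your plan leaves open.
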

\begin{proof}  We prove the existence of a negative $\beta$-tectonic set $U$.  
The existence of a positive tectonic set is symmetric\footnote{Simply define $G(\beta)$ by keeping all blue arcs and reversing all red arcs.}.	
	
Define an auxiliary digraph $G(\beta)=(B,F)$ as follows: for every red arc $x\to y$ of $D(\beta)$ put $x\to y$ into $F$, and for every blue arc $x\to y$ of $D(\beta)$ put the reversed arc $y\to x$ into $F$.  Thus, every arc $x \to y$ of $G(\beta)$ is \emph{realized} either by a red arc $x\to y$ of $D(\beta)$ or a blue arc $y\to x$ of $D(\beta)$.

A \emph{sink strongly connected component} of a digraph is 
a maximal subdigraph in which every vertex can reach every other vertex and from which
no arc leads to a vertex outside the component.

It is well known that every finite directed graph contains at least one sink strongly connected component.
Let $U\subseteq B$ be the vertex set of a sink strongly connected component of $G(\beta)$.  By definition, $U$ is nonempty.
We prove that $U$ is a negative $\beta$-tectonic set.

We first verify the color conditions across the cut $(U,B\setminus U)$.  Suppose
$a\in U$ and $b\in B\setminus U$.  If $a\to b$ were a red arc in $D(\beta)$,
then $a\to b$ would be an arc of $G(\beta)$ from $U$ to $B\setminus U$,
contradicting the choice of $U$ as the vertex set of a sink strongly connected component
of $G(\beta)$.  Similarly, if $b\to a$ were a blue arc in $D(\beta)$, then, by
the definition of $G(\beta)$, the reversed arc $a\to b$ would be an arc of
$G(\beta)$ from $U$ to $B\setminus U$, again a contradiction.  Therefore every
arc of $D(\beta)$ from $U$ to $B\setminus U$ is blue, and every arc of
$D(\beta)$ from $B\setminus U$ to $U$ is red.  Since $U$ is nonempty, it remains
only to prove that $U\ne B$.  This is the conclusion of the following sequence of claims.

\smallskip
\noindent\textsc{Claim 1.}
If $\{p,q,r\}$ induces a directed $3$-cycle in $G(\beta)$, then no pair from $\{p,q,r\}$ induces a bi-cycle in $D(\beta)$.

We may assume that the induced directed $3$-cycle $C$ is $p\to q \to r \to p$.
Assume, to the contrary, that some pair from $\{p,q,r\}$ induces a bi-cycle in $D(\beta)$.  
Without loss of generality, $p$ and $q$ induce a bi-cycle in
$D(\beta)$ with red arc $p\to q$ and blue arc $q\to p$.
Because $q\to r$ is in $G(\beta)$, either there is a red arc $q\to r$ or a blue arc $r\to q$ in $D(\beta)$.
Similarly, because $r\to p$ is in $G(\beta)$, either there is a red arc $r\to p$ or a blue arc $p\to r$ in $D(\beta)$.
In each of the four possible cases for these arcs, we shall obtain a bi-arrow in $D(\beta)$, contradicting 
Lemma~\ref{lem:kernel}(\ref{lem:kernel:no-biarrow}) which states that
$\beta$-kernels contain no bi-arrow.

\smallskip
\noindent
\emph{Case 1}: $q\to r$ and $r\to p$ are both red arcs in $D(\beta)$.  In this case,
$q$ is the center of the bi-fork $(q;p,r)$ in $D(\beta)$: blue $q\to p$ and red $q\to r$.  Lemma~\ref{lem:kernel}(\ref{lem:kernel:bi-fork-eye}) implies that this bi-fork $(q;p,r)$ extends to the eye-to-eye bi-cycle $(q;p,r)$.
In particular, a blue arc $r\to p$ exists in $D(\beta)$.  Combined with the red arc $r\to p$, this forms a forbidden bi-arrow in $D(\beta)$.

\smallskip
\noindent
\emph{Case 2}:
$q\to r$ is a red arc and $p\to r$ is a blue arc in $D(\beta)$.  In this case, $p$ is the center of a bi-fork $(p; r, q)$ in $D(\beta)$:
blue $p\to r$, red $p\to q$.  Lemma~\ref{lem:kernel}(\ref{lem:kernel:bi-fork-eye}) implies that this bi-fork $(p;r,q)$ extends to the eye-to-eye bi-cycle $(p;r,q)$. 
In particular, a blue arc $q\to r$ exists in $D(\beta)$.  Combined with the red arc $q\to r$, this forms a forbidden bi-arrow in $D(\beta)$.

\smallskip
\noindent
\emph{Case 3}:
$r\to q$ is a blue arc and $r\to p$ is a red arc in $D(\beta)$.  In this case,
$r$ is the center of a bi-fork $(r;q,p)$: blue $r\to q$, red $r\to p$.  
Lemma~\ref{lem:kernel}(\ref{lem:kernel:bi-fork-eye}) implies that this bi-fork $(r;q,p)$ extends to the eye-to-eye bi-cycle $(r;q,p)$. 
In particular, a blue arc $p\to q$ exists in $D(\beta)$.  Combined with the red arc $p\to q$, this forms a forbidden bi-arrow in $D(\beta)$.

\smallskip
\noindent
\emph{Case 4}:
$r\to q$ and $p\to r$ are both blue arcs in $D(\beta)$.  In this case,
$p$ is the center of a bi-fork $(p;r,q)$: blue $p\to r$, red $p\to q$. 
Lemma~\ref{lem:kernel}(\ref{lem:kernel:bi-fork-eye}) implies that this bi-fork $(p;r,q)$ extends to the eye-to-eye bi-cycle $(p;r,q)$. 
In particular, a red arc $r\to q$ exists in $D(\beta)$.  Combined with the blue arc $r\to q$, this forms a forbidden bi-arrow in $D(\beta)$.

\smallskip
\noindent\textsc{Claim 2.}
No strongly connected component of $G(\beta)$ contains a bi-cycle.

Assume, to the contrary, that some pair  of vertices spanning a bi-cycle in
$D(\beta)$ belongs to the same strongly connected component $K\subset G(\beta)$.
Consider such a pair $p,q\in K$; let $p\to q$ be red and $q\to p$ be blue. Since $p$ and $q$ lie in the
same strongly connected component of $G(\beta)$, there is a directed $qp$-path
in $G(\beta)$.  
Such a path together with the arc $p\to q$ forms a directed cycle in $K$.  Let $C$ be a directed cycle of minimum length in $K$ that contains an arc $p\to q$ such that   $(p,q)$ is a bi-cycle in $D(\beta)$.  Write $C$ as
$$
v_1\to v_2\to\cdots\to v_k\to v_1,
$$
where $v_1=q$ and $v_k=p$.  For notational convenience, set $v_0=v_k$ and
$v_{k+1}=v_1$.   Notice that the closing arc $v_k\to v_1$ belongs to $G(\beta)$ because
$p\to q$ is red in $D(\beta)$.

Observe that $q\to p$ is not an arc of $G(\beta)$.  Indeed, such an arc would
arise either from a red arc $q\to p$ in $D(\beta)$ or from a blue arc $p\to q$ in
$D(\beta)$.  In the first case, this red arc $q\to p$ together with the blue arc
$q\to p$ gives a bi-arrow; in the second case, this blue arc $p\to q$ together
with the red arc $p\to q$ gives a bi-arrow.  Both possibilities contradict
Lemma~\ref{lem:kernel}(\ref{lem:kernel:no-biarrow}).  Hence $k\ge 3$.  By
Claim~1, the cycle $C$ is not a triangle, so $k\ge 4$.

We next show that not every arc of $C$ is realized by a red arc of $D(\beta)$.
Suppose, to the contrary, that for every $j\in\{1,\ldots,k\}$, the arc
$v_j\to v_{j+1}$ of $G(\beta)$ is realized by the red arc $v_j\to v_{j+1}$ of
$D(\beta)$.  Since $p$ and $q$ span a bi-cycle, $D(\beta)$ also contains the blue
arc $q\to p$, that is, the blue arc $v_1\to v_k$.  Thus $v_1$ is the center of a
bi-fork $(v_1;v_k,v_2)$ in $D(\beta)$.  By
Lemma~\ref{lem:kernel}(\ref{lem:kernel:bi-fork-eye}), this bi-fork extends to an
eye-to-eye bi-cycle.  In particular, $v_k$ and $v_2$ span a bi-cycle, and
$D(\beta)$ contains the red arc $v_k\to v_2$.
It follows that $G(\beta)$ contains the arc $v_k\to v_2$, and therefore contains
the directed cycle
$$
v_k\to v_2\to v_3\to\cdots\to v_k.
$$
This cycle is shorter than $C$, and the pair $v_k,v_2$ spans a bi-cycle in
$D(\beta)$.  This contradicts the minimal choice of $C$.  Therefore at least one
arc of $C$ is not realized by a red arc of $D(\beta)$.

Since the closing arc $v_k\to v_1$ is realized by the red arc $p\to q$ of
$D(\beta)$, while not every arc of $C$ is realized by a red arc of $D(\beta)$,
there exists $i\in\{1,\ldots,k\}$ such that the arc $v_{i-1}\to v_i$ is 
realized by the blue arc $v_i\to v_{i-1}$ of $D(\beta)$, whereas the arc $v_i\to v_{i+1}$ is realized
by the red arc $v_i\to v_{i+1}$ of $D(\beta)$.
Thus $v_i$ is the center of the bi-fork $(v_i; v_{i-1},v_{i+1})$ in $D(\beta)$.  
By Lemma~\ref{lem:kernel}(\ref{lem:kernel:bi-fork-eye}), this extends to an eye-to-eye bi-cycle; in particular,
$D(\beta)$ contains the red arc $v_{i-1}\to v_{i+1}$.  Hence $G(\beta)$ contains
the arc $v_{i-1}\to v_{i+1}$.
Replacing the two-arc segment
$$
v_{i-1}\to v_i\to v_{i+1}
$$
of $C$ by the shortcut arc $v_{i-1}\to v_{i+1}$ produces a directed cycle in
$G(\beta)$ shorter than $C$.  Moreover, vertices $v_{i-1}$ and
$v_{i+1}$ induce a bi-cycle in $D(\beta)$, contradicting the minimal choice of $C$.
Therefore the original vertices $p$ and $q$ cannot belong to the same strongly
connected component of $G(\beta)$.

\smallskip
\noindent\textsc{Claim 3.} $U$ is a proper subset of $B$.

Because $D(\beta)$ is nontrivial, it contains a bi-fork.  By
Lemma~\ref{lem:kernel}(\ref{lem:kernel:bi-fork-eye}), this bi-fork extends to an
eye-to-eye bi-cycle.  By Claim~2, the two vertices spanning one of the bi-cycles
belong to distinct strongly connected components of $G(\beta)$.  Hence
$G(\beta)$ has more than one strongly connected component.  Since $U$ is the
vertex set of one strongly connected component of $G(\beta)$, it follows that
$U\ne B$.

Thus $U$ is nonempty and proper, and the cut-color conditions proved above show
that $U$ is a negative $\beta$-tectonic set.  The symmetric construction gives a
positive $\beta$-tectonic set, completing the proof.
\end{proof}

Now we are ready to prove the converse of Proposition~\ref{prop:axioms_satisfied}.

\begin{proposition}\label{prop:axioms_sufficient}
	If $H$ is a connected graph that admits a track-labeling, then $H$ is a Hilbert fragment.
\end{proposition}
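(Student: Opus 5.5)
The plan is to reduce to Theorem~\ref{thm:representation} by showing that every connected track-labeled graph $(H,\lambda)$ can be converted, by finitely many tectonic $\epsilon$-shifts, into a bi-fork-free track-labeling of the same graph $H$. Proposition~\ref{prop:tectonic-feasible} guarantees that each shift again yields a track-labeling. Proposition~\ref{prop:tectonic-exists} guarantees that a tectonic set exists whenever some $\beta$-kernel is nontrivial. What remains is a termination argument.

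The potential I would use is the number of pairs of vertices with equal finite labels, $\Phi(\lambda)=|\{\{u,v\}:u\ne v,\ \lambda(u)=\lambda(v)\text{ finite}\}|$. Suppose $D(\beta)$ contains a bi-fork. Take $U$ as in Proposition~\ref{prop:tectonic-exists} and choose $\epsilon<\Delta_\beta/2$. Then $\lambda_\epsilon$ creates no new equalities, because labels outside $\beta$ are untouched and the new value $\beta\pm\epsilon$ is not used elsewhere. On the other hand, it separates every pair $u\in U$, $v\in B\setminus U$. Since $\emptyset\ne U\subsetneq B$, at least one pair is separated, so $\Phi$ strictly decreases.

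The point to watch is that a shift might create bi-forks at other labels. This does not affect termination: $\Phi$ is a nonnegative integer that drops with every shift, whatever happens to other kernels. So the process must stop after at most $\binom{|V|}{2}$ steps. At that point no $\beta$-kernel is nontrivial, so the final labeling $\lambda'$ is a bi-fork-free track-labeling of $H$. Theorem~\ref{thm:representation} then shows that $H$ is a Hilbert fragment.

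Two details need checking. First, $\lambda_\epsilon$ takes values in $\mathbb Q\cup\{\pm\infty\}$, since $\epsilon$ is rational; this is consistent with the definition of a track-labeling. Second, twin triples, and hence kernels, are recomputed after each shift, but Proposition~\ref{prop:tectonic-exists} is stated for an arbitrary connected track-labeled graph, so it applies at every stage. The step I expect to carry the real content is the one already supplied by Proposition~\ref{prop:tectonic-exists}, namely that $U\ne B$. Here it is only invoked, and the rest is the monotone-potential bookkeeping above.
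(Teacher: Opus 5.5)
Your proposal is correct and follows the paper's own proof essentially step for step. It applies repeated tectonic $\epsilon$-shifts supplied by Propositions~\ref{prop:tectonic-exists} and~\ref{prop:tectonic-feasible}, terminates via the strictly decreasing number of pairs of vertices with equal finite labels, and concludes with Theorem~\ref{thm:representation}. Your added remarks, that the potential argument is unaffected by what happens at other labels and that kernels are recomputed at each stage, only make explicit bookkeeping that the paper leaves implicit.
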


\begin{proof}
	Let $(H,\lambda)$ be a connected track-labeled graph.  
	If the twin-arcs digraph of $(H,\lambda)$ contains a bi-fork, apply the bi-fork elimination procedure: at each step, choose a nontrivial $\beta$-kernel, select a positive or negative tectonic set $U$ as guaranteed by Proposition~\ref{prop:tectonic-exists}, and perform a sufficiently small rational $U$-tectonic $\epsilon$-shift in the corresponding direction.  This shift splits the chosen $\beta$-kernel into smaller label classes, so any remaining bi-fork supported on its vertices lies in a strictly smaller kernel.  Since each shift splits one finite label class into two nonempty parts and creates no new equality with any other finite label, the number of equal finite-labeled pairs strictly decreases.  Hence the procedure terminates.  Each shift preserves the track-axioms by Proposition~\ref{prop:tectonic-feasible}, and the terminal labeling is bi-fork-free.
	
	Thus $H$ admits a bi-fork-free track-labeling.  Since $H$ is connected,
	Theorem~\ref{thm:representation} implies that $H$ is a Hilbert fragment.
\end{proof}

More generally, we obtain the following characterization.

\begin{theorem}\label{thm:characterization}
	A graph $G$ is a Hilbert fragment if and only if every connected component of
	$G$ admits a track-labeling.
\end{theorem}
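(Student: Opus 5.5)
The theorem follows by combining the connected-case results with closure of Hilbert fragments under taking induced subgraphs and disjoint unions. We treat the two directions separately.

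\emph{Necessity.} Let $G$ be a Hilbert fragment with realization $(P,\varphi)$, and let $K$ be a connected component of $G$. The restriction of $\varphi$ to $V(K)$ is again injective and preserves adjacency and non-adjacency, since $K$ is an induced subgraph of $G$. Hence $(P,\varphi|_{V(K)})$ realizes $K$, so $K$ is a connected Hilbert fragment. Proposition~\ref{prop:axioms_satisfied} then gives a track-labeling of $K$. Internally, this goes through Lemma~\ref{lem:connected-realization}, which replaces the realization by one coming from a connected canonical representation, where the gap-labeling is verified to satisfy the track-axioms.

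\emph{Sufficiency.} Suppose every component $K_1,\ldots,K_k$ of $G$ admits a track-labeling. Each $K_i$ is connected, so Proposition~\ref{prop:axioms_sufficient} applies: bi-fork elimination by tectonic $\epsilon$-shifts (Propositions~\ref{prop:tectonic-feasible} and~\ref{prop:tectonic-exists}) produces a bi-fork-free track-labeling, and Theorem~\ref{thm:representation} then shows $K_i$ is a Hilbert fragment. Since $G$ is the disjoint union of the $K_i$, Lemma~\ref{lem:disjointunion} shows that $G$ is a Hilbert fragment. This uses ordered concatenation of the canonical representations together with Lemma~\ref{lem:concatenation}.

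\emph{Remaining checks.} Both directions are assembly of earlier results, so the real weight lies in Proposition~\ref{prop:axioms_sufficient} and Theorem~\ref{thm:representation}. Two small points need attention here.

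First, the empty graph and the one-vertex components. A one-vertex graph is trivially a Hilbert fragment, as already noted in the construction, and it has a track-labeling, for instance any finite label. For the empty graph the statement is vacuous, and we may take the convention that it is an induced subgraph of any $\mathcal H_P$.

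Second, the termination argument in Proposition~\ref{prop:axioms_sufficient} should be confirmed: each shift strictly decreases the number of equal-labeled finite pairs, and the final labeling has no bi-fork. This is where any real subtlety would sit, but it is already established earlier in the paper, so for Theorem~\ref{thm:characterization} itself no new obstacle is expected.
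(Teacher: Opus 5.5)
Your proof is correct and follows the paper's own argument. For sufficiency you apply Proposition~\ref{prop:axioms_sufficient} to each component and then Lemma~\ref{lem:disjointunion}; for necessity you note that each component is itself an induced subgraph of $\mathcal H_P$, hence a connected Hilbert fragment, and apply Proposition~\ref{prop:axioms_satisfied}. Your extra remarks on trivial components and on termination of bi-fork elimination agree with the paper but add nothing the cited results do not already provide.
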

\begin{proof}
	Suppose that every connected component of $G$ admits a track-labeling.  
	Proposition~\ref{prop:axioms_sufficient} implies
	that every connected component of $G$ is a Hilbert fragment.  Consequently, Lemma~\ref{lem:disjointunion} implies that $G$ is a Hilbert fragment.
	
	Conversely, suppose that $G$ is a Hilbert fragment.  Then there exists an interval order 
	$P$ such that $G$ is isomorphic to an induced subgraph of $\mathcal{H}_P$.  Since any connected component of $G$ is also
	an induced subgraph of $\mathcal{H}_P$, it is a connected Hilbert fragment.  Proposition~\ref{prop:axioms_satisfied} implies it has
	a track-labeling.
\end{proof}

\section{Hilbert fragments are cocomparability graphs}
\label{sec:family}
In this section we place Hilbert fragments into the well-established family of cocomparability graphs by proving that the complement of every Hilbert fragment is a comparability graph. This connection links Hilbert fragments to the classical theory of partially ordered sets, interval graphs, and perfect graphs.

A graph $G$ is a \emph{comparability graph} if there exists a partially ordered set
$(V(G),\preceq)$ such that two distinct vertices $x$ and $y$ are adjacent precisely when
they are comparable in the poset, that is, when either $x\preceq y$ or $y\preceq x$.
Equivalently, $G$ is a comparability graph if and only if its edges admit a
\emph{transitive orientation}: an orientation $\vec G$ such that whenever
$x\to y$ and $y\to z$ are oriented edges of $\vec G$, the edge $xz$ belongs to $G$
and is oriented as $x\to z$.
A graph $G$ is a \emph{cocomparability graph} if its complement $\overline G$
is a comparability graph; equivalently, $G$ is the incomparability graph of a poset.

Interval orders form a classical subclass of partially ordered sets.
A fundamental theorem of Gilmore and Hoffman~\cite{GilmoreHoffman1964}
states that a graph is an interval graph if and only if it is the
incomparability graph of an interval order.
Consequently, interval graphs form a subclass of cocomparability graphs.

A graph is \emph{perfect} if every induced subgraph has chromatic number equal to
the size of its largest clique.
Cocomparability graphs constitute one of the classical perfect graph classes.
Indeed, every cocomparability graph is the incomparability graph of a poset.
In every induced subgraph, cliques correspond to antichains, whereas proper colorings
correspond to partitions of the underlying subposet into chains.
Dilworth's Theorem~\cite{dilworth} therefore implies that the chromatic number equals
the clique number for every induced subgraph, and hence every cocomparability graph
is perfect.
In particular, interval graphs are perfect.

We now prove that every Hilbert fragment is a cocomparability graph.
The key observation is that every gap-labeling of a connected Hilbert fragment
naturally induces an umbrella-free ordering.
We then apply the following characterization of cocomparability graphs due to
Kratsch and Stewart~\cite{KratschStewart1993DominationCocomparability}.

Let $H=(V,E)$ be a graph and let $\sigma=(v_1,\dots,v_n)$ be a linear ordering of $V$.
An \emph{umbrella} in $\sigma$ is a triple $(v_i,v_j,v_k)$ with $i<j<k$ such that
\[
v_iv_k\in E
\quad\text{and}\quad
v_iv_j\notin E,\;\;
v_jv_k\notin E.
\]
Equivalently, $\sigma$ is umbrella-free if and only if, for every
$i<j<k$,
\[
v_iv_k\in E
\quad\text{implies}\quad
(v_iv_j\in E \text{ or } v_jv_k\in E).
\]

\begin{lemma}[Kratsch and Stewart~\cite{KratschStewart1993DominationCocomparability}]
\label{lem:KS}
A graph $G$ is a cocomparability graph if and only if it admits an umbrella-free ordering.
\end{lemma}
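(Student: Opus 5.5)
The plan is to translate directly between transitive orientations of $\overline G$ and linear orderings of $V(G)$, using the fact that a transitive orientation is the same thing as a strict partial order whose comparability graph is $\overline G$. Both directions rest on the contrapositive form of umbrella-freeness. An ordering $\sigma$ is umbrella-free exactly when, for all $i<j<k$, the conditions $v_iv_j\notin E$ and $v_jv_k\notin E$ imply $v_iv_k\notin E$. In words, non-adjacency is transitive along $\sigma$.

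For the forward direction, suppose $G$ is a cocomparability graph. Then there is a strict partial order $\prec$ on $V(G)$ whose comparable pairs are exactly the non-edges of $G$. I would take $\sigma=(v_1,\dots,v_n)$ to be any linear extension of $\prec$. Let $i<j<k$ with $v_iv_j\notin E$ and $v_jv_k\notin E$. Both pairs are then comparable in $\prec$. Since $\sigma$ extends $\prec$, the comparabilities must be oriented forward: $v_i\prec v_j$ and $v_j\prec v_k$. Transitivity gives $v_i\prec v_k$, so $v_iv_k\notin E$. Hence $\sigma$ has no umbrella.

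For the converse, let $\sigma$ be umbrella-free. I would orient every edge of $\overline G$ from its earlier endpoint in $\sigma$ to its later one. This orientation is acyclic, since every arc goes forward in $\sigma$, and it orients every edge of $\overline G$ exactly once. To check transitivity, suppose $v_i\to v_j$ and $v_j\to v_k$ are arcs. Then $i<j<k$, and both $v_iv_j$ and $v_jv_k$ are non-edges of $G$. Umbrella-freeness forces $v_iv_k\notin E$, so $v_iv_k$ is an edge of $\overline G$, and because $i<k$ it is oriented $v_i\to v_k$. Thus $\overline G$ has a transitive orientation, and $G$ is a cocomparability graph.

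I do not expect a real obstacle here. The only points needing care are bookkeeping: that a transitive orientation is equivalent to a strict partial order on the same vertex set, and that a linear extension always exists for a finite poset. The substantive content is simply that umbrella-freeness is exactly transitivity of the forward-oriented non-edges.
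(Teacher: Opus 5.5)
Your proof is correct and complete. Both directions come down to one observation: $\sigma$ is umbrella-free exactly when the orientation of $\overline G$ pointing forward along $\sigma$ is transitive. Hence the umbrella-free orderings of $G$ are precisely the linear extensions of transitive orientations of $\overline G$.

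The paper gives no proof of this lemma and simply cites Kratsch and Stewart, so your argument supplies the standard self-contained justification for that citation. The proof of Theorem~\ref{thm:HF-coco} uses only the direction you treat second: an umbrella-free ordering yields a transitive orientation of $\overline G$.
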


Cocomparability graphs admit efficient recognition, and an umbrella-free ordering
can be computed in polynomial time (in fact, linear time).
(see~\cite{CorneilDusartHabibKoehler2016,DusartHabib2017,Koehler}).

\begin{theorem}\label{thm:HF-coco}
Every Hilbert fragment is a cocomparability graph.
\end{theorem}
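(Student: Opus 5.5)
The plan is to reduce to connected fragments and then, for a connected fragment, order the vertices by a track-labeling and check directly that the ordering is umbrella-free; Lemma~\ref{lem:KS} then finishes the proof.

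\emph{Reduction to the connected case.} If $\sigma_1,\dots,\sigma_k$ are umbrella-free orderings of the components of $G$, their concatenation is umbrella-free. An umbrella $(v_i,v_j,v_k)$ needs the edge $v_iv_k$, so $v_i$ and $v_k$ lie in the same component. Since each component occupies a contiguous block of the concatenation, $v_j$ lies in that component too, and the triple is then an umbrella of that component's ordering. So by Lemma~\ref{lem:KS} it suffices to treat a connected Hilbert fragment $H$, and we may assume $|V(H)|\ge 2$. By Proposition~\ref{prop:axioms_satisfied}, $H$ has a track-labeling $\lambda$.

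\emph{The ordering.} Sort $V$ by $\lambda$, so all $-\infty$ vertices come first, then the finite labels in increasing order, then all $+\infty$ vertices. The values $\nu_z$ and Lemma~\ref{lem:periph-iff} are defined and proved for an arbitrary connected track-labeled graph, using only \clique, \betweenness\ and \path. Hence:
\begin{itemize}
\item a $-\infty$ vertex $z$ is adjacent exactly to the finite-labeled vertices of label at most $\nu_z$;
\item a $+\infty$ vertex $z$ is adjacent exactly to the finite-labeled vertices of label at least $\nu_z$.
\end{itemize}
In both cases extreme vertices have no extreme neighbours, by \independence. Break ties as follows:
\begin{itemize}
\item among $-\infty$ vertices, order by increasing $\nu_z$, so neighbourhoods grow along the order;
\item among $+\infty$ vertices, also order by increasing $\nu_z$, so neighbourhoods shrink along the order;
\item among vertices sharing a finite label, order arbitrarily.
\end{itemize}

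\emph{Umbrella check.} Let $i<j<k$ with $v_iv_k\in E$. By \independence, $v_i$ and $v_k$ are not both extreme.
\begin{itemize}
\item If $\lambda(v_i)<\lambda(v_j)<\lambda(v_k)$, then \betweenness\ gives both edges $v_iv_j$ and $v_jv_k$.
\item If $\lambda(v_i)=\lambda(v_j)$ is finite, then $v_iv_j\in E$ by \clique. Likewise, if $\lambda(v_j)=\lambda(v_k)$ is finite, then $v_jv_k\in E$.
\item If $\lambda(v_i)=\lambda(v_j)=-\infty$, then $v_k$ is finite with $\lambda(v_k)\le\nu_{v_i}\le\nu_{v_j}$, so $v_jv_k\in E$ by the description of $N(v_j)$.
\item If $\lambda(v_j)=\lambda(v_k)=+\infty$, then $\lambda(v_i)\ge\nu_{v_k}\ge\nu_{v_j}$, so $v_iv_j\in E$.
\end{itemize}
These cases exhaust all possibilities, because $\lambda(v_i)\le\lambda(v_j)\le\lambda(v_k)$. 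Hence the ordering is umbrella-free.

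The main obstacle, and the only place where the tie-breaking matters, is the pair of cases with two equally extreme labels. Those cases are exactly where the nested-neighbourhood description from Lemma~\ref{lem:periph-iff} is needed.
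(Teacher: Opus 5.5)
Your proof is correct and is essentially the paper's argument: the same reduction to components, the same label-sorted ordering with the $-\infty$ and $+\infty$ classes sorted by the largest (respectively smallest) finite neighbour label, and the same umbrella check via \betweenness, \clique, \independence\ and the nested neighbourhoods of extreme vertices. The only cosmetic difference is that you take that neighbourhood description from Lemma~\ref{lem:periph-iff}, rightly noting that its proof uses only \clique, \textsc{Path} and \betweenness\ (not bi-fork-freeness or the loose integer normalization of that section), whereas the paper re-derives it inline.
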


\begin{proof}
	It is enough to prove the theorem for connected Hilbert fragments. Indeed, if each connected component of 
	$H$ admits an umbrella-free ordering, then listing these orderings in consecutive 
	blocks gives an umbrella-free ordering of $H$: whenever two vertices are adjacent, 
	they lie in the same component, and every vertex between them in the block ordering also lies in that component.
	
	Assume, then, that $H=(V,E)$ is connected, and let $(H,\lambda)$ be a gap-labeled connected Hilbert fragment. 
	The result is immediate if $|V|=1$, so assume $|V|\geq 2$. 
	As shown in the proof of Proposition~\ref{prop:axioms_satisfied}, 
	the labeling $\lambda$ satisfies the track-axioms. 
	In particular, we may use the \clique, \independence, \path, and \betweenness\ axioms.
	
	We now define a linear ordering $\sigma_\lambda$ of $V$. First list the vertices with 
	label $-\infty$, then the finite label classes in increasing order of their labels, and 
	finally the vertices with label $+\infty$. The order within each finite label class is 
	arbitrary. 
	
	Note that since $H$ is connected, every extreme-labeled vertex  has a finite-labeled neighbor by INDEPENDENCE. To order the two extreme label classes, define, for each vertex $x$ with $\lambda(x)=-\infty$,
	$$
	r(x)=\max\{\lambda(y):xy\in E,\ -\infty<\lambda(y)<+\infty\},
	$$
	and order these vertices by nondecreasing values of $r(x)$. Similarly, for each vertex $x$ with $\lambda(x)=+\infty$, define
	$$
	\ell(x)=\min\{\lambda(y):xy\in E,\ -\infty<\lambda(y)<+\infty\},
	$$
	and order these vertices by nondecreasing values of $\ell(x)$.
	
	We claim that $\sigma_\lambda$ is umbrella-free. First observe that if $x$ has an extreme 
	label and $x$ is adjacent to one vertex of a finite label class, then $x$ is adjacent 
	to every vertex of that finite label class. Indeed, suppose $xy\in E$, $\lambda(y)=\lambda(z)$ 
	is finite, and $xz\notin E$. By the \clique\  axiom, $yz\in E$, so $x,y,z$ induce a path 
	with the latter two vertices having the same label. The \path\  axiom then implies that $\lambda(x)$ is finite, a contradiction.
	
	It follows from this observation and the \betweenness\ axiom that the finite neighbors 
	of a vertex labeled $-\infty$ form an initial segment of the finite label classes. 
	More precisely, if $\lambda(x)=-\infty$, then $x$ is adjacent to every vertex whose 
	finite label is at most $r(x)$, and to no vertex whose finite label is greater than $r(x)$. 
	Dually, if $\lambda(x)=+\infty$, then the finite neighbors of $x$ form a final segment of 
	the finite label classes: $x$ is adjacent to every vertex whose finite label is at least 
	$\ell(x)$, and to no vertex whose finite label is smaller than $\ell(x)$. By the 
	\independence\  axiom, vertices with extreme labels have no neighbors with extreme labels.
	
	Therefore, if $u$ and $v$ both have label $-\infty$ and $u$ precedes $v$ in $\sigma_\lambda$, 
	then $N_H(u)\subseteq N_H(v)$. Similarly, if $u$ and $v$ both have label $+\infty$ 
	and $u$ precedes $v$ in $\sigma_\lambda$, then $N_H(v)\subseteq N_H(u)$.
	
	Now let $a,b,c$ be vertices with $a\prec_{\sigma_\lambda} b\prec_{\sigma_\lambda} c$, 
	and suppose that $ac\in E$. We prove that $ab\in E$ or $bc\in E$.
	
	First suppose that $\lambda(b)$ is finite. Since the ordering $\sigma_\lambda$ respects the order of the labels, we have
	$$
	\lambda(a)\leq \lambda(b)\leq \lambda(c).
	$$
	If both inequalities are strict, then the \betweenness\ axiom applied to $a,b,c$ 
	gives $ab\in E$ and $bc\in E$. Otherwise, either $\lambda(a)=\lambda(b)$ or 
	$\lambda(b)=\lambda(c)$. Since $\lambda(b)$ is finite, the \clique\  axiom gives $ab\in E$ in the first case and $bc\in E$ in the second.
	
	Next suppose that $\lambda(b)=-\infty$. Then $\lambda(a)=-\infty$ as well. 
	Since $ac\in E$ and vertices with extreme labels are independent, the vertex $c$ has finite label. 
	Because $a$ precedes $b$ in the $-\infty$ block, we have $N_H(a)\subseteq N_H(b)$. 
	Since $c\in N_H(a)$, it follows that $bc\in E$.
	
	Finally suppose that $\lambda(b)=+\infty$. Then $\lambda(c)=+\infty$. Since $ac\in E$ 
	and vertices with extreme labels are independent, the vertex $a$ has finite label. 
	Because $b$ precedes $c$ in the $+\infty$ block, we have $N_H(c)\subseteq N_H(b)$. 
	Since $a\in N_H(c)$, it follows that $ab\in E$.
	
	Thus $\sigma_\lambda$ contains no umbrella. By the Kratsch--Stewart characterization (Lemma~\ref{lem:KS}), $H$ is a cocomparability graph.
\end{proof}

A graph is \emph{chordal} if it contains no induced cycle of length at
least four. By the classical theorem of Gilmore and
Hoffman~\cite{GilmoreHoffman1964}, a graph is an interval graph if and only
if it is chordal and its complement is a comparability graph.

We proved in~\cite{LengthPolyhedron} that Hilbert fragments contain no induced
\(k\)-cycles for \(k \ge 5\). Combined with
Theorem~\ref{thm:HF-coco}, this implies that every \(C_4\)-free Hilbert
fragment is an interval graph. However, the classes of Hilbert fragments
and interval graphs are incomparable under inclusion. For example, the $\exists$-graph in Figure~\ref{fig:forbid6} 
(obtained from a claw by subdividing two of its edges) is an interval graph on
six vertices, but it is not a Hilbert fragment.

\section{Applications}
\label{sec:applications}

Order- and graph-theoretic models often acquire applications far removed from the questions that originally motivated them. A particularly relevant example is the theory of knowledge spaces introduced by Doignon and Falmagne in 1985~\cite{DoignonFalmagne1985}. Starting from a combinatorial model of possible states of knowledge, this theory developed over the following decades into the theory of learning spaces and provided the mathematical foundation for adaptive assessment and learning systems~\cite{DoignonFalmagneLearningSpaces}. Thus an abstract combinatorial structure, initially motivated by the problem of knowledge assessment, eventually became the basis of a practical educational methodology.

Interval models provide another instructive example. Beyond their classical applications in genetics, psychology, archaeology, and scheduling, interval graphs and their generalizations continue to appear in problems of scheduling and resource allocation, bioinformatics, and communication networks~\cite{ArdevolEtAl2024,BarYehudaEtAl2006,TrotterHarary1979}. In such applications, intervals typically encode the occupation of a one-dimensional resource---time, position, bandwidth, or another ordered parameter---while their intersection graph records conflicts or interactions. The usefulness of the model lies precisely in recovering exploitable structure from this underlying linear organization.

The examples in this section are offered in a similar, although deliberately exploratory, spirit. A Hilbert fragment records pairwise intersections while its track-labeling and an underlying interval order may reveal a hidden one-dimensional organization not apparent in the graph itself. We illustrate this point of view in two settings: perceptual profiles arising from sensory data and collaboration structures organized by levels of expertise. We do not propose these examples as developed application theories. Rather, they suggest ways in which the structural information recovered from a Hilbert fragment might be interpreted outside its original polyhedral setting. It would be interesting if, as has happened with other order-theoretic models, some of these interpretations eventually led to useful models of their own.

\subsection*{A hidden one-dimensional sensory axis}
\label{ex:C6bar-sensory}
	Humans experience the world through `sensory bundles', collections of perceptual features that occur simultaneously. From the overlaps among these bundles one may construct a graph whose vertices represent the bundles and whose edges represent pairwise overlaps. If this graph is a Hilbert fragment, then the data are compatible with an underlying one-dimensional experience.	
	The associated track-labeling then recovers a linear ordering of the irreducible perceptual profiles along that latent axis.

	To illustrate this idea, suppose a large panel of assessors repeatedly samples
	a sensory phenomenon, such as a flavor progression or aroma development.
	The number of assessors may be much larger than the number of robust perceptual
	profiles ultimately identified.  After aggregating the responses, suppose six
	stable perceptual profiles are extracted:
	$$
	A_1,A_2,A_3,B_1,B_2,B_3.
	$$
	These profiles, rather than the assessors, will be the vertices of the graph.
	Assume that two profiles are joined by an edge whenever they share at least one
	reported descriptor.  Suppose the resulting overlap graph has edge set
	$$
	\{A_iA_j:1\le i<j\le 3\}
	\cup
	\{B_iB_j:1\le i<j\le 3\}
	\cup
	\{A_iB_i:1\le i\le 3\}.
	$$
	Thus the three $A_i$ profiles pairwise overlap, the three $B_i$ profiles
	pairwise overlap, and each $A_i$ overlaps only with its corresponding $B_i$
	across the two groups.  This graph is isomorphic to $\overline{C_6}$; see
Figure~\ref{fig:C6bar-sensory-graph}.
	
	Notice that the observed overlap graph is not an interval graph; for instance,
	$A_1,A_2,B_2,B_1$ induce a $4$-cycle.  Thus the profile overlaps cannot be
	explained simply by representing the profiles themselves as intervals on a line.
	Nevertheless, the graph is a Hilbert fragment.  The Hilbert-fragment
	representation reveals a hidden one-dimensional perceptual axis: the profiles
	are not intervals themselves, but irreducible sensory phase signatures generated
	by an underlying interval order.

\begin{figure}[ht]
	\centering
	\begin{tikzpicture}[scale=.75]
		\tikzset{
			prof/.style={circle,draw,thick,minimum size=8mm,inner sep=1pt,font=\small}
		}
		
		\node[prof,label=above:$A_1$]      (A1) at (90:2.3)  {};
		\node[prof,label=above right:$B_2$] (B2) at (30:2.3)  {};
		\node[prof,label=below right:$A_3$] (A3) at (-30:2.3) {};
		\node[prof,label=below:$B_1$]      (B1) at (-90:2.3) {};
		\node[prof,label=below left:$A_2$] (A2) at (-150:2.3){};
		\node[prof,label=above left:$B_3$] (B3) at (150:2.3) {};
		
		\draw[thick] (A1)--(A2);
		\draw[thick] (A1)--(A3);
		\draw[thick] (A2)--(A3);
		
		\draw[thick] (B1)--(B2);
		\draw[thick] (B1)--(B3);
		\draw[thick] (B2)--(B3);
		
		\draw[thick] (A1)--(B1);
		\draw[thick] (A2)--(B2);
		\draw[thick] (A3)--(B3);
	\end{tikzpicture}
	\caption{The observed profile-overlap graph, isomorphic to $\overline{C_6}$.}
	\label{fig:C6bar-sensory-graph}
\end{figure}
	
	A track-labeling is obtained by assigning
	$$
	\lambda(A_i)=2
	\qquad\text{and}\qquad
	\lambda(B_i)=4
	\qquad\text{for }1\le i\le 3.
	$$
	This suggests a hidden linear profile ordering with the $A$-profiles occupying
	an earlier perceptual stage and the $B$-profiles occupying a later perceptual
	stage.
	
	To explain this labeling, consider the interval representation determined by the
	following intervals:
	$$
	M_2=[1,2],\qquad M_4=[3,4],
	$$
	$$
	J_i=[2,3]
	\qquad\text{for }1\le i\le 3,
	$$
	and private tags
	$$
	T_{A_i}=[2,2],\qquad T_{B_i}=[3,3]
	\qquad\text{for }1\le i\le 3.
	$$
	Here the three intervals $J_1,J_2,J_3$ are distinct represented intervals with
	the same endpoints.  The interval $M_2$ may be interpreted as a descriptor
	common to all early profiles, the interval $M_4$ as a descriptor common to all
	later profiles, and each $J_i$ as a transition descriptor linking the
	corresponding pair $A_i,B_i$.
	
	\begin{figure}
		\centering
		\begin{tikzpicture}[x=1.45cm,y=.78cm]
			\tikzset{
				axislabel/.style={font=\scriptsize},
				interval/.style={line width=.9pt},
				tag/.style={circle,fill=black,inner sep=1.3pt},
				iname/.style={font=\scriptsize,anchor=west}
			}
			
			\foreach \x in {0,...,5}{
				\draw[gray!35,line width=.25pt] (\x,0) -- (\x,5.6);
				\node[axislabel] at (\x,-.35) {$\x$};
			}
			
			\draw[interval] (1,.8) -- (2,.8);
			\node[iname] at (2.08,.8) {$M_2$};
			
			\draw[interval] (3,.8) -- (4,.8);
			\node[iname] at (4.08,.8) {$M_4$};
			
			\draw[interval] (2,2.1) -- (3,2.1);
			\node[iname] at (3.08,2.1) {$J_1$};
			
			\draw[interval] (2,2.6) -- (3,2.6);
			\node[iname] at (3.08,2.6) {$J_2$};
			
			\draw[interval] (2,3.1) -- (3,3.1);
			\node[iname] at (3.08,3.1) {$J_3$};
			
			\node[tag] at (2,4.1) {};
			\node[font=\scriptsize,anchor=east] at (1.92,4.1) {$T_{A_1}$};
			
			\node[tag] at (2,4.55) {};
			\node[font=\scriptsize,anchor=east] at (1.92,4.55) {$T_{A_2}$};
			
			\node[tag] at (2,5.0) {};
			\node[font=\scriptsize,anchor=east] at (1.92,5.0) {$T_{A_3}$};
			
			\node[tag] at (3,4.1) {};
			\node[font=\scriptsize,anchor=west] at (3.08,4.1) {$T_{B_1}$};
			
			\node[tag] at (3,4.55) {};
			\node[font=\scriptsize,anchor=west] at (3.08,4.55) {$T_{B_2}$};
			
			\node[tag] at (3,5.0) {};
			\node[font=\scriptsize,anchor=west] at (3.08,5.0) {$T_{B_3}$};

			\node[font=\scriptsize,anchor=west] at (-2.45,.8)  {gap markers};
			\node[font=\scriptsize,anchor=west] at (-2.45,2.6) {witnesses};
			\node[font=\scriptsize,anchor=west] at (-2.45,4.55) {private tags};
		\end{tikzpicture}
		\caption{A hidden one-dimensional interval representation explaining the profile-overlap graph. Only intervals needed to represent the Hilbert fragment are shown.}
		\label{fig:C6bar-sensory-intervals}
	\end{figure}
	
	The corresponding representing sets are
	$$
	\begin{array}{rclcl}
		S_{A_i}
		&=&
		G[1,2]\cup\{J_i,T_{A_i}\}
		&=&
		\{M_2,J_i,T_{A_i}\},
		\\[3pt]
		S_{B_i}
		&=&
		G[3,4]\cup\{J_i,T_{B_i}\}
		&=&
		\{M_4,J_i,T_{B_i}\},
	\end{array}
	\qquad 1\le i\le 3.
	$$
	These sets are irreducible fundamental extenders, and hence their augmented
	images are Hilbert sets.  Their intersection graph has precisely the edges
	$$
	A_iA_j,\quad B_iB_j,\quad A_iB_i
	\qquad
	(1\le i<j\le 3,\ 1\le i\le 3),
	$$
	so the observed overlap graph is the Hilbert fragment $\overline{C_6}$.
	
	In this interpretation, the Hilbert sets are irreducible perceptual profiles,
	or irreducible sensory phase signatures.  The graph is constructed directly from
	overlaps among reported perceptual profiles, while the interval axis is initially
	hidden.  The recovered track-labeling exposes a linear profile ordering: the
	profiles $A_1,A_2,A_3$ occupy one stage of the sensory experience, the profiles
	$B_1,B_2,B_3$ occupy a later stage, and each pair $A_i,B_i$ is linked by a
	shared transition descriptor.  Thus the data reveal that the perceptual
	phenomenon is compatible with an underlying one-dimensional experience.
	
	The representing sets above, and hence their augmented images as Hilbert sets,
	have a natural sensory interpretation.  For each $i$, the set
	$$
	S_{A_i}=\{M_2,J_i,T_{A_i}\}
	$$
	is an early-stage perceptual profile: $M_2$ is a descriptor common to all early
	profiles, $J_i$ is a transition descriptor shared only with the corresponding
	later profile $B_i$, and $T_{A_i}$ is a private descriptor distinguishing
	$A_i$ from the other early profiles.  Similarly,
	$$
	S_{B_i}=\{M_4,J_i,T_{B_i}\}
	$$
	is the corresponding later-stage profile: $M_4$ is common to all later profiles,
	$J_i$ links $B_i$ back to $A_i$, and $T_{B_i}$ distinguishes $B_i$ from the other
	later profiles.
	
	Thus each Hilbert set represents an irreducible sensory phase signature: a
	minimal perceptual profile generated by the hidden interval axis.  The observed
	graph records which such profiles share descriptors, while the track-labeling
	reveals the underlying one-dimensional ordering of these profiles.
	
\subsection*{A company scenario for track-labeling}
\label{sub:scenario}

Consider a company, a research institute, or a group of workers with varying levels of expertise.
Each expertise level is represented by a rational number, with larger
numbers corresponding to higher levels of expertise. \emph{Trainees} are assigned
the level \( -\infty \), whereas \emph{consultants} or \emph{emeriti} are assigned the level
\( +\infty \). All other workers will be called \emph{regular workers}. 

Workers are represented by the vertices of a connected graph, where
two vertices are adjacent if and only if the corresponding workers are
allowed to collaborate. If two workers are not allowed to collaborate,
this may be due either to insufficient ability or to company policy.

Our hypothesis is that the track-axioms provide a model for a
well-balanced collaboration structure when the vertices are labeled by
expertise level. The track-axioms impose the following conditions on the company.

\begin{description}

\item[\clique]
Workers with the same expertise level must be able to collaborate.

\item[\independence]
Trainees and emeriti do not collaborate with one another. This is typically
natural: trainees must first receive training before participating in
collaborative tasks, whereas emeriti act as individual consultants and are
not continuously present at the company.

\item[\path]
At any given expertise level, either all workers or none of them should be
able to collaborate with, and in particular train, a given trainee. Similarly,
either all or none of the workers at that level should be able to seek the
advice of a given emeritus. Thus the ability to collaborate with specific trainees or
emeriti should depend only on expertise level.

Together with the \betweenness\ axiom, this condition implies that the
expertise levels of workers who may train a trainee form a lower interval,
while those of workers who may collaborate with an emeritus form an upper
interval. For example, if workers at expertise level \(3\) may train a new
employee and there are workers at expertise level \(2\), then the latter
must also be allowed and able to train them.

\item[\betweenness]
If two workers with different expertise levels may collaborate, then every
worker whose expertise level lies strictly between theirs must be able to
collaborate with both. This prevents unnatural gaps in the ordered
compatibility structure.

\item[\claw]
No worker should act as a hub connecting three pairwise incompatible workers unless at least two of those three workers are trainees or at least two are emeriti. This condition prevents a regular worker from
being the sole link between large groups of other regular workers,
and thereby reduces the fragility of the collaboration network.
\end{description}

Here is a concrete example of this scenario.
Suppose that a research institute, or one of its departments, is being organized, and consider the role of its director.

\begin{enumerate}

\item The institute plans to employ a collection \(V\) of research positions. At this stage, the director knows what functions the positions are supposed to serve: what projects they support, what laboratories or resources they use, which groups must exchange information, etc.

\item The director constructs a collaboration graph $H$.
Each position is a vertex. Two positions are adjacent if and only if the institute wants the occupants of those positions to be able to collaborate directly.

\item If $H$ is a Hilbert fragment, the constructive proof of the characterization theorem supplies an underlying interval-order representation,
and it can be constructed using the constructive proof of the characterization theorem. 
The intervals may reveal a latent one-dimensional structure that can then be interpreted in terms of the organization.

\end{enumerate}

\begin{figure}
\begin{tikzpicture}[
    every node/.style={circle, draw, minimum size=8mm, inner sep=1pt},
    lab/.style={draw=none, circle=none, font=\small}
]

\node (a1) at (-2,  1.4) {$a_1$};
\node (a2) at (-2, -1.4) {$a_2$};
\node (b2) at ( 2,  1.4) {$b_2$};
\node (b1) at ( 2, -1.4) {$b_1$};
\node (m)  at ( 0,  0)   {$m$};
\node (t)  at (-4,  0)   {$t$};
\node (e)  at ( 4,  0)   {$e$};

\draw
(t) -- (a1)
(t) -- (a2)
(a1) -- (a2)
(a1) -- (m)
(a2) -- (m)
(m) -- (b1)
(m) -- (b2)
(b1) -- (b2)
(a1) -- (b2)
(a2) -- (b1)
(b1) -- (e)
(b2) -- (e);

\node[lab] at (-4, -0.8) {$-\infty$};
\node[lab] at (-2,  2.2) {$2$};
\node[lab] at (-2, -2.2) {$2$};
\node[lab] at ( 0, -0.8) {$4$};
\node[lab] at ( 2,  2.2) {$6$};
\node[lab] at ( 2, -2.2) {$6$};
\node[lab] at ( 4, -0.8) {$+\infty$};

\end{tikzpicture}
\caption{Organization graph for a research institute}\label{fig:institute}
\label{fig:instituto}
\end{figure}

An example of the graph $H$ is shown in Figure~\ref{fig:instituto}. The director notices that the graph is a Hilbert fragment;a track-labeling is shown in the figure. With this track-labeling, there are two junior-level roles ($a_1$ and $a_2$), two senior-level roles ($b_1$ and $b_2$), an intermediate-level role ($m$), a trainee role $t$, and a consultant role $e$. The director may then recruit people for these roles. The intervals of the underlying interval order may reveal hidden information about the tasks and responsibilities.

	\begin{figure}
		\centering
		\begin{tikzpicture}[x=1cm,y=.6cm]
			\tikzset{
				axislabel/.style={font=\scriptsize},
				interval/.style={line width=.9pt},
				tag/.style={circle,fill=black,inner sep=1.3pt},
				iname/.style={font=\scriptsize,anchor=west}
			}
			
			\foreach \x in {0,...,8}{
				\draw[gray!35,line width=.25pt] (\x,0) -- (\x,5.6);
				\node[axislabel] at (\x,-.35) {$\x$};
			}
			
			\draw[interval] (1,.8) -- (2,.8);
			\node[iname] at (2.08,.8) {$M_2$};
			
			\draw[interval] (3,.8) -- (4,.8);
			\node[iname] at (4.08,.8) {$M_4$};
			
			\draw[interval] (5,.8) -- (6,.8);
			\node[iname] at (6.08,.8) {$M_6$};
			
			\draw[interval] (0,1.5) -- (2,1.5);
			\node[iname] at (2.08,1.5) {$J(t)$};
			
			\draw[interval] (1,2.5) -- (4,2.5);
			\node[iname] at (4.08,2.5) {$J(a_1,m)$};
			
			\draw[interval] (1,3) -- (4,3);
			\node[iname] at (4.08,3) {$J(a_2,m)$};
			
			\draw[interval] (3,1.5) -- (6,1.5);
			\node[iname] at (6.08,1.5) {$J(m,b_1)$};
			
			\draw[interval] (3,2) -- (6,2);
			\node[iname] at (6.08,2) {$J(m,b_2)$};
			
			\draw[interval] (2,3.5) -- (5,3.5);
			\node[iname] at (5.08,3.5) {$J(a_1,b_2)$};
			
			\draw[interval] (2,4) -- (5,4);
			\node[iname] at (5.08,4) {$J(a_2,b_1)$};
			
			\draw[interval] (5,2.75) -- (7,2.75);
			\node[iname] at (7.08,2.75) {$J(e)$};

			\node[tag] at (2,4.5) {};
			\node[font=\scriptsize,anchor=east] at (1.92,4.5) {$T_{a_1}$};
			
			\node[tag] at (2,5) {};
			\node[font=\scriptsize,anchor=east] at (1.92,5) {$T_{a_2}$};
			
			\node[tag] at (3,4.75) {};
			\node[font=\scriptsize,anchor=west] at (3.08,4.75) {$T_{m}$};
			
			\node[tag] at (5,4.5) {};
			\node[font=\scriptsize,anchor=west] at (5.08,4.5) {$T_{b_1}$};
			
			\node[tag] at (5,5) {};
			\node[font=\scriptsize,anchor=west] at (5.08,5) {$T_{b_2}$};

			\node[font=\scriptsize,anchor=west] at (-3.45,.8)  {gap markers};
			\node[font=\scriptsize,anchor=west] at (-3.45,2.6) {witnesses};
			\node[font=\scriptsize,anchor=west] at (-3.45,4.55) {private tags};
		\end{tikzpicture}
		\caption{Interval representation for the research institute. Only intervals needed to represent the Hilbert fragment are shown.}
		\label{fig:institute-rep}
	\end{figure}

An interval representation is shown in Figure~\ref{fig:institute-rep}. The Hilbert sets are the following.

\begin{align*}
S_t&=\varnothing\cup\{J(t)\}\\
S_{a_1}&=G[1,2]\cup \{J(a_1,b_2),T_{a_1}\}= \{M_2,J(t),J(a_1,m),J(a_2,m),J(a_1,b_2),T_{a_1}\}\\
S_{a_2}&=G[1,2]\cup \{J(a_2,b_1),T_{a_2}\}=\{M_2,J(t),J(a_1,m),J(a_2,m),J(a_2,b_1),T_{a_2}\}\\
S_m&=G[3,4]\cup \{T_m\}=\{M_4,J(a_1,m),J(a_2,m),J(m,b_1),J(m,b_2),J(a_1,b_2),J(a_2,b_1),T_m\}\\
S_{b_1}&=G[5,6]\cup \{J(a_2,b_1),T_{b_1}\}=\{M_6,J(e),J(m,b_1),J(m,b_2),J(a_2,b_1),T_{b_1}\}\\
S_{b_2}&=G[5,6]\cup\{J(a_1,b_2),T_{b_2}\}=\{M_6,J(e),J(m,b_1),J(m,b_2),J(a_1,b_2),T_{b_2}\}\\
S_e&=\varnothing\cup\{J(e)\}
\end{align*}

\section{Further research}
\label{sec:conclusion}

In this paper, we characterize Hilbert fragments by introducing track-labelings and identifying five axioms that define them. Our main theorem converts the recognition of Hilbert fragments into the problem of deciding whether a graph admits such a track-labeling. 
For a Hilbert fragment $H$, define 
$$m(H)=\min\{m(P): P\ \text{is an interval order},\ H\subseteq \mathcal{H}_P\},$$ 
and
\[
f(n)=\max\{m(H): H\text{ is a Hilbert fragment on }n\text{ vertices}\}.
\]
One can prove that, for every integer $n\ge1$,
$$
{\left\lceil\frac n2\right\rceil\le f(n)\le 2n.}
$$ 
Thus $f(n)=\Theta(n)$. In particular, the linear upper bound yields a finite search procedure for recognizing Hilbert fragments, and hence shows that the recognition problem is decidable. Decidability can also be established independently via integer linear programming.


\begin{problem}
Is there a polynomial-time algorithm to determine whether a connected graph admits a track-labeling?
\end{problem}

Since Hilbert fragments form a hereditary graph class, they have a characterization by minimal forbidden induced subgraphs.  Call a graph $H$ an \emph{obstruction} if $H$ is not a Hilbert fragment but every proper induced subgraph of $H$ is a Hilbert fragment. 
Our computations identify the small obstructions, while known infinite families show that there are infinitely many in total. 
There are 27 obstructions on at most nine vertices; all obstructions on at most six vertices are displayed in Appendix~\ref{app:obstructions}. 
The holes and odd antiholes provide infinite families \cite{LengthPolyhedron}. 

Hilbert fragments are cocomparability graphs. Consider the infinite families occurring in Gallai's forbidden induced-subgraph characterization of comparability graphs~\cite{Gallai1967}. Taking complements gives corresponding families of forbidden induced subgraphs for cocomparability graphs. Some of these do not yield infinite families of obstructions for Hilbert fragments: sufficiently large members already contain smaller Hilbert-fragment obstructions, such as the Pan graph (a $4$-cycle with an added pendant edge) in Figure~\ref{fig:forbid6}.

The natural structural problem is therefore to understand and, if possible, describe these minimal obstructions.

\begin{problem}
Determine the minimal forbidden induced subgraphs for the class of Hilbert fragments.
\end{problem}

\section*{Declaration of generative AI use}

ChatGPT (OpenAI) was used to check logical arguments, smooth the writing, and revise this manuscript.
The authors retain responsibility for verifying all arguments and references and for the final content of the paper.

\appendix

\section{Proof of the augmentation and concatenation lemmas}
\label{app:proofs}

\noindent{\bf Lemma \ref{lem:augmentation}.}
	\emph{Suppose that $\mathcal F=\{I_x=[\ell_x,r_x]:x\in X\}$ is a rational interval
	representation of an interval order $P=(X,\prec)$.  Let $P^+$ and
	$\mathcal F^+$ be the augmentation determined by $P$ and $\mathcal F$.  If
	$S\subseteq \mathcal F$ is an irreducible fundamental extender with respect to
	$\mathcal F$, then
	$$
	S^+=\{I_x^+:I_x\in S\}
	$$
	is a Hilbert set of $P^+$.}
\vskip.1em
\begin{proof}
	Throughout the proof, each interval retains the label of the element it represents.  In particular, an original interval $I_x^+$ and an
	added singleton interval $[q,q]$ are distinct represented intervals even if they
	have the same endpoints.
	
	Let
	$$
	c_1<c_2<\cdots<c_t
	$$
	be the distinct endpoints of $\mathcal F$, and let
	$$
	\alpha(s)=N(s-c_1),
	\qquad
	d_i=\alpha(c_i)
	$$
	be the positive affine map used in the construction of $\mathcal F^+$.  Thus
	$$
	0=d_1<d_2<\cdots<d_t=M.
	$$
	Write
	$$
	\mathcal F_X^+=\{I_x^+:x\in X\}\subseteq \mathcal F^+.
	$$
	For a subfamily $A\subseteq \mathcal F$, write
	$$
	A^+=\{I_x^+:I_x\in A\}\subseteq \mathcal F_X^+.
	$$
	Conversely, if $A^+\subseteq \mathcal F_X^+$, write
	$$
	A=\{I_x\in\mathcal F:I_x^+\in A^+\}
	$$
	for its preimage in $\mathcal F$.
	
	First we show that $S^+$ is a fundamental extender with respect to the
	canonical representation $\mathcal F^+$ of $P^+$.  Choose the fixed
	gap--borderline decomposition of $S$ with respect to $\mathcal F$:
	$$
	S=G[I]\cup Z.
	$$
	Suppose first that $I=[c_{h-1},c_h]$ is a bounded gap.  If
	$Z\subseteq L[I]$, then consider the unit gap
	$$
	J=[d_{h-1},d_{h-1}+1]
	$$
	of $\mathcal F^+$.  Since the original intervals in $\mathcal F_X^+$ have
	endpoints only among $d_1,\ldots,d_t$, an interval $I_x^+$ contains $J$ if and
	only if $I_x$ contains the original gap $[c_{h-1},c_h]$.  Hence
	$$
	G^+[J]=(G[I])^+.
	$$
	Also $Z^+\subseteq L^+[J]$.  Therefore
	$$
	S^+=G^+[J]\cup Z^+
	$$
	is a left fundamental extender of $P^+$.   
	
	Similarly, if $Z\subseteq R[I]$, then use the unit gap
	$$
	J=[d_h-1,d_h].
	$$
	Again $G^+[J]=(G[I])^+$, and now $Z^+\subseteq R^+[J]$.  Hence
	$$
	S^+=G^+[J]\cup Z^+
	$$
	is a right fundamental extender of $P^+$.
	
	The extreme-gap cases are immediate.  If $I=[-\infty,c_1]$, then
	$G[I]=\varnothing$ and $Z\subseteq R[I]$, so
	$$
	S^+=\varnothing\cup Z^+
	$$
	is a right fundamental extender at the left extreme gap $[-\infty,0]$ of
	$\mathcal F^+$.  If $I=[c_t,+\infty]$, then $G[I]=\varnothing$ and
	$Z\subseteq L[I]$, so
	$$
	S^+=\varnothing\cup Z^+
	$$
	is a left fundamental extender at the right extreme gap $[M,+\infty]$ of
	$\mathcal F^+$.
	
	It remains to prove that $S^+$ is irreducible with respect to $\mathcal F^+$.
	Suppose, for a contradiction, that $S^+$ is reducible.  Then there are proper
	fundamental extenders
	$$
	T_1^+,\ldots,T_a^+,
	\qquad a\ge 2,
	$$
	with respect to $\mathcal F^+$ such that
	$$
	S^+=T_1^+\dot{\cup}T_2^+\dot{\cup}\cdots\dot{\cup}T_a^+.
	$$
	Since $S^+\subseteq \mathcal F_X^+$, each $T_j^+$ is contained in
	$\mathcal F_X^+$.  For each $j$, let
	$$
	T_j=\{I_x\in\mathcal F:I_x^+\in T_j^+\}
	$$
	be the preimage of $T_j^+$ in $\mathcal F$.  We claim that each $T_j$ is a
	fundamental extender with respect to $\mathcal F$.
	
	Fix $j$, and choose a gap--borderline decomposition of $T_j^+$ in
	$\mathcal F^+$:
	$$
	T_j^+=G^+[J]\cup Y^+,
	$$
	where either $Y^+\subseteq L^+[J]$ or $Y^+\subseteq R^+[J]$.  Since
	$T_j^+\subseteq\mathcal F_X^+$, the subfamily $Y^+$ contains no added singleton
	interval.
	
	Assume first that $J=[q,q+1]$ is a bounded unit gap of $\mathcal F^+$.  There
	is a unique $h\in\{2,\ldots,t\}$ such that
	$$
	d_{h-1}\le q<q+1\le d_h.
	$$
	For intervals coming from $\mathcal F$, containment of $[q,q+1]$ is equivalent
	to containment of the original gap $[c_{h-1},c_h]$.  Thus
	$$
	G^+[q,q+1]\cap\mathcal F_X^+
	=
	(G[c_{h-1},c_h])^+.
	$$
	If $Y^+\subseteq L^+[q,q+1]$, then its preimage $Y$ satisfies
	$$
	Y\subseteq L[c_{h-1},c_h]
	$$
	when $q=d_{h-1}$, and $Y=\varnothing$ otherwise.  Hence
	$$
	T_j=G[c_{h-1},c_h]\cup Y
	$$
	is a fundamental extender with respect to $\mathcal F$.  The case
	$Y^+\subseteq R^+[q,q+1]$ is analogous: the preimage $Y$ satisfies
	$$
	Y\subseteq R[c_{h-1},c_h]
	$$
	when $q+1=d_h$, and $Y=\varnothing$ otherwise.  Again $T_j$ is a fundamental
	extender with respect to $\mathcal F$.
	
	If $J$ is the left extreme gap $[-\infty,0]$ of $\mathcal F^+$, then
	$G^+[J]=\varnothing$ and the preimage of $Y^+$ is contained in
	$R[-\infty,c_1]$.  Thus $T_j$ is a fundamental extender at the left extreme gap
	of $\mathcal F$.  If $J$ is the right extreme gap $[M,+\infty]$, the same
	argument gives a fundamental extender at the right extreme gap of $\mathcal F$.
	
	Therefore each $T_j$ is a fundamental extender with respect to $\mathcal F$.
	Since the map $I_x\mapsto I_x^+$ is a bijection from $\mathcal F$ to
	$\mathcal F_X^+$, the partition of $S^+$ pulls back to a partition
	$$
	S=T_1\dot{\cup}T_2\dot{\cup}\cdots\dot{\cup}T_a.
	$$
	Each $T_j$ is properly contained in $S$, because each $T_j^+$ is properly
	contained in $S^+$.  This contradicts the irreducibility of $S$ with respect
	to $\mathcal F$.
	
	Thus $S^+$ is an irreducible fundamental extender with respect to the canonical
	representation $\mathcal F^+$ of $P^+$; therefore, $S^+$ is a Hilbert set of $P^+$.
\end{proof}

\noindent{\bf Lemma~\ref{lem:concatenation}.}
	\emph{Let $P$ be the ordered concatenation of interval orders $P_1,\ldots,P_k$.  If
	$S$ is a Hilbert set of $P_i$, then  $S^{(i)}$ is a Hilbert set of $P$.}

\begin{proof}
	The endpoint set of $\mathcal C$ is
	$$
	\{0,1,\ldots,a_{k+1}-1\}.
	$$
	Every integer in this set occurs as both a left endpoint and a right endpoint,
	because this is true in each translated canonical block.  Hence $\mathcal C$
	is canonical by Proposition~\ref{prop:canonical}.
	
	We first record a local restriction property.  Suppose that
	$T\subseteq\mathcal C_i'$ is a fundamental extender with respect to
	$\mathcal C$.  We claim that its untranslated preimage in $\mathcal C(P_i)$ is
	a fundamental extender with respect to $\mathcal C(P_i)$.  Choose a
	gap--borderline decomposition of $T$ in $\mathcal C$.  If the witnessing gap
	lies inside the $i$th block, then the gap set and the two borderline sets,
	restricted to $\mathcal C_i'$, are exactly the translated copies of the
	corresponding gap set and borderline sets inside $\mathcal C(P_i)$.  If the
	witnessing gap is the empty gap immediately to the left of the $i$th block,
	then it restricts to the left extreme gap of $\mathcal C(P_i)$.  If the
	witnessing gap is the empty gap immediately to the right of the $i$th block,
	then it restricts to the right extreme gap of $\mathcal C(P_i)$.  Any other
	gap has gap set and borderline sets disjoint from $\mathcal C_i'$, and
	therefore cannot witness a nonempty fundamental extender contained in
	$\mathcal C_i'$. Thus the untranslated preimage of $T$ is a fundamental 
	extender with respect to $\mathcal C(P_i)$.
	
	Now let $S$ be a Hilbert set of $P_i$.  Its fixed decomposition in
	$\mathcal C(P_i)$ translates to a gap--borderline decomposition of $S^{(i)}$
	in $\mathcal C$, so $S^{(i)}$ is a fundamental extender of $P$.  Suppose, for a
	contradiction, that $S^{(i)}$ is not a Hilbert set of $P$, so it admits a nontrivial
	partition
	$$
	S^{(i)}=T_1\dot{\cup}T_2\dot{\cup}\cdots\dot{\cup}T_a,
	\qquad a\ge 2,
	$$
	where each $T_j$ is a proper fundamental extender of $P$ contained in
	$S^{(i)}$.  Since $S^{(i)}\subseteq\mathcal C_i'$, every $T_j$ is contained in
	$\mathcal C_i'$.  By the local restriction property, the untranslated preimage
	of each $T_j$ is a fundamental extender with respect to $\mathcal C(P_i)$.
	These preimages form a nontrivial partition of $S$ into proper fundamental
	extenders, contradicting the irreducibility of $S$ in $\mathcal C(P_i)$.
	Therefore $S^{(i)}$ is a Hilbert set of $P$.
\end{proof}

\section{Proof of the connected realization lemma}
\label{app:proofC}

\noindent{\bf Lemma~\ref{lem:connected-realization}.}
	\emph{Every connected Hilbert fragment admits a realization by a connected canonical
	interval representation.}

\begin{proof}
	Let $H=(V,E)$ be a connected Hilbert fragment, and choose a realization
	$(P,\varphi)$.  Consider the canonical representation $\mathcal C(P)$.  The
	empty bounded gaps of $\mathcal C(P)$ decompose this representation into
	maximal connected subrepresentations, called blocks.
	
	Since the union of the intervals of a fundamental extender is an interval, no Hilbert set of $P$ meets two distinct blocks.  	
	Since $H$ is connected, the Hilbert sets $\varphi(v)$, $v\in V$, all lie in
	one block, say $\mathcal F$.  Each $\varphi(v)$ is still a fundamental extender with respect to $\mathcal F$,
	using the same decomposition when the witnessing gap lies inside the block and
	the corresponding extreme-gap decomposition when the witnessing gap is a
	boundary gap of the block.  
	Moreover, each $\varphi(v)$ remains irreducible with respect to
	$\mathcal F$: any nontrivial partition of $\varphi(v)$ into fundamental
	extenders with respect to $\mathcal F$ would also be such a partition with
	respect to $\mathcal C(P)$, contradicting that $\varphi(v)$ is a Hilbert set of
	$P$.
	
	Now augment the connected representation $\mathcal F$.  By
	Lemma~\ref{lem:augmentation}, the images of the irreducible fundamental
	extenders $\varphi(v)$ are Hilbert sets in the augmented interval order.  The
	augmentation is connected, since every bounded gap of $\mathcal F$ is contained
	in some interval of $\mathcal F$, and subdividing such a gap into unit gaps
	preserves this property.  Intersections among the selected Hilbert sets are
	unchanged under the affine transformation and augmentation.  Hence the
	augmented representation gives a connected canonical realization of $H$.
\end{proof}

\section{Small obstructions}
\label{app:obstructions}
\begin{figure}[H]
\centering
\begingroup

\begin{subfigure}[t]{0.22\textwidth}
\centering
\tikzset{v/.style={circle,fill=black,minimum width=.1em,inner sep=.09em}}
\begin{tikzpicture}[scale=.43]
\foreach \i in {1,2,3,4,5}{
  \coordinate (P\i) at (90-72*\i:2);
  \node[v] at (P\i) {};
}
\draw (P1)--(P2)--(P3)--(P4)--(P5)--cycle;
\end{tikzpicture}
\caption*{5-hole}
\label{fig:G38}
\end{subfigure}\hfill
\begin{subfigure}[t]{0.22\textwidth}
\centering
\begin{tikzpicture}[scale=1.25]
\tikzset{v/.style={circle,fill=black,minimum width=.1em, inner sep=.09em}}
\foreach \i in {0,1,2}{
  \node[v] (a\i) at (-1+.75*\i,1) {};
}
\node[v] (bL) at (-1+.35,0) {};
\node[v] (bR) at (.15,0) {};
\foreach \i in {0,1,2}{
  \draw (a\i)--(bL) (a\i)--(bR);
}
\end{tikzpicture}
\caption*{$K_{2,3}$}
\label{fig:G44}
\end{subfigure}\hfill
\begin{subfigure}[t]{0.22\textwidth}
\centering
\begin{tikzpicture}[scale=1.1]
\tikzset{v/.style={circle,fill=black,minimum width=.1em, inner sep=.09em}}
\foreach \i in {0,1}{
  \node[v] (a\i) at (\i,0) {};
  \node[v] (b\i) at (\i,1) {};
}
\node[v] (c) at (1.8,1.5) {};
\draw (c)--(b1)--(a1)--(a0)--(b0)--(b1);
\end{tikzpicture}
\caption*{Pan}
\label{fig:pan}
\end{subfigure}\hfill

\endgroup
\end{figure}

\newcommand{\HFfigscaleSix}{0.5}
\begin{figure}[H]
\centering
\begingroup
\begin{subfigure}[t]{0.23\textwidth}
\centering

\begin{tikzpicture}[scale=0.450, baseline=(current bounding box.center)]
\tikzset{v/.style={circle,fill=black,minimum width=.1em, inner sep=.1em}}
\foreach \i in {1,...,6}{
  \coordinate (P\i) at (90-60*\i:2);
  \node[v] at (P\i) {};
}
\draw (P1)--(P2)--(P3)--(P4)--(P5)--(P6)--cycle;
\end{tikzpicture}
\caption*{6-hole}
\end{subfigure}\hfill
\begin{subfigure}[t]{0.23\textwidth}
\centering
\begin{tikzpicture}[scale=0.80, baseline=(current bounding box.center)]
\tikzset{v/.style={circle,fill=black,minimum width=.1em, inner sep=.09em}}
\node[v] (v1) at (1,-1) {};
\node[v] (v2) at (1,0) {};
\node[v] (v3) at (1,1) {};
\node[v] (v4) at (0,-1) {};
\node[v] (v5) at (0,0) {};
\node[v] (v6) at (0,1) {};
\draw (v1) -- (v4);
\draw (v3) -- (v6);
\draw (v2) -- (v5);
\draw (v1)--(v2) -- (v3);
\end{tikzpicture}
\caption*{$\exists$-graph}
\label{fig:Comb}
\end{subfigure}\hfill
\begin{subfigure}[t]{0.23\textwidth}
\centering
\begin{tikzpicture}[scale=0.90, baseline=(current bounding box.center)]
\tikzset{v/.style={circle,fill=black,minimum width=.1em, inner sep=.1em}}
\node[v] (v1) at (-1.11,-0.05) {};
\node[v] (v2) at (1.45,0.38) {};
\node[v] (v3) at (0.90,0.98) {};
\node[v] (v4) at (0.71,-1.24) {};
\node[v] (v5) at (0.35,-0.05) {};
\node[v] (v6) at (-1.30,-1.3) {};
\draw (v1) -- (v5);
\draw (v1) -- (v6);
\draw (v2) -- (v3);
\draw (v2) -- (v5);
\draw (v3) -- (v5);
\draw (v4) -- (v5);
\end{tikzpicture}
\caption*{Dog}
\end{subfigure}\hfill
\begin{subfigure}[t]{0.23\textwidth}
\centering
\begin{tikzpicture}[scale=0.70, baseline=(current bounding box.center)]
\tikzset{v/.style={circle,fill=black,minimum width=.1em, inner sep=.1em}}
\node[v] (v1) at (-0.34,-1.83) {};
\node[v] (v2) at (-0.29,0.08) {};
\node[v] (v3) at (-2.2,-0.65) {};%
\node[v] (v4) at (-1.95,0.5) {};%
\node[v] (v5) at (0.93,-1.08) {};
\node[v] (v6) at (.70,0.94) {};%
\draw (v1) -- (v2);
\draw (v1) -- (v5);
\draw (v2) -- (v3);
\draw (v2) -- (v4);
\draw (v2) -- (v5);
\draw (v2) -- (v6);
\draw (v3) -- (v4);
\end{tikzpicture}
\caption*{Goose}
\end{subfigure}\hfill
\begin{subfigure}[t]{0.23\textwidth}
\centering
\begin{tikzpicture}[scale=0.65, baseline=(current bounding box.center)]
\tikzset{v/.style={circle,fill=black,minimum width=.1em, inner sep=.1em}}
\node[v] (v1) at (-1.9,-1.35) {};
\node[v] (v6) at (0,2.25) {};
\node[v] (v3) at (1.9,-1.35) {};
\node[v] (v2) at (0,1) {};
\node[v] (v5) at (-.88,-0.5) {};
\node[v] (v4) at (.88,-0.5) {};
\draw (v6) -- (v2)--(v4);
\draw (v1) -- (v5)--(v2) (v3)--(v4)--(v5);
\draw (v5)--(v1) ;
\end{tikzpicture}
\caption*{Net}
\label{fig:Net}
\end{subfigure}\hfill
\begin{subfigure}[t]{0.43\textwidth}
\centering
\begin{tikzpicture}[scale=0.70, baseline=(current bounding box.center)]
\tikzset{v/.style={circle,fill=black,minimum width=.1em, inner sep=.1em}}
\node[v] (v1) at (-0.34,-1.83) {};
\node[v] (v2) at (-0.29,0.08) {};
\node[v] (v3) at (0.65,1.75) {};
\node[v] (v4) at (1.35,0.45) {};
\node[v] (v5) at (0.93,-1.08) {};
\node[v] (v6) at (-2.30,0.64) {};
\draw (v1) -- (v2);
\draw (v1) -- (v5);
\draw (v2) -- (v3);
\draw (v2) -- (v4);
\draw (v2) -- (v5);
\draw (v2) -- (v6);
\draw (v3) -- (v4);
\draw (v4) -- (v5);
\end{tikzpicture}
\caption*{Umbrella}
\end{subfigure}
\hfill
\begin{subfigure}[t]{0.23\textwidth}
\centering
\begin{tikzpicture}[scale=0.6, baseline=(current bounding box.center)]
\tikzset{v/.style={circle,fill=black,minimum width=.1em, inner sep=.1em}}
\node[v] (v1) at (1.76,0.39) {};
\node[v] (v2) at (-0.17,0.30) {};
\node[v] (v3) at (1.23,2.07) {};
\node[v] (v4) at (-2.40,-0.11) {};
\node[v] (v5) at (-1.18,-1.37) {};
\node[v] (v6) at (0.77,-1.27) {};
\draw (v1) -- (v2);
\draw (v1) -- (v3);
\draw (v1) -- (v6);
\draw (v2) -- (v3);
\draw (v2) -- (v4);
\draw (v2) -- (v5);
\draw (v2) -- (v6);
\draw (v4) -- (v5);
\draw (v5) -- (v6);
\end{tikzpicture}
\caption*{Blade}
\end{subfigure}\hfill
\begin{subfigure}[t]{0.23\textwidth}
\centering
\begin{tikzpicture}[scale=0.80, baseline=(current bounding box.center)]
\tikzset{v/.style={circle,fill=black,minimum width=.1em, inner sep=.1em}}
\node[v] (v1) at (1,1) {};
\node[v] (v2) at (1,-1) {};
\node[v] (v3) at (-1,-1) {};
\node[v] (v4) at (-1,1) {};
\node[v] (v5) at (0.2,.75) {};
\node[v] (v6) at (0,0) {};
\draw (v1)to [bend left=25] (v2)to [bend left=25] (v3)to [bend left=25] (v4)to [bend left=25] (v1);
\draw (v6) -- (v5);
\draw (v2) -- (v6)--(v3);
\draw (v1) -- (v6)--(v4);
\end{tikzpicture}
\caption*{Candle}
\end{subfigure}\hfill
\begin{subfigure}[t]{0.23\textwidth}
\centering
\rotatebox[origin=c]{-52}{%
\begin{tikzpicture}[scale=0.65, baseline=(current bounding box.center)]
\tikzset{v/.style={circle,fill=black,minimum width=.1em, inner sep=.1em}}
\node[v] (v1) at (0.84,-1.44) {};
\node[v] (v2) at (-0.86,-0.63) {};
\node[v] (v3) at (-0.92,-2.40) {};
\node[v] (v4) at (-1.2,2.00) {};
\node[v] (v5) at (0.34,1.63) {};
\node[v] (v6) at (2.22,0.35) {};
\draw (v1) -- (v2);
\draw (v1) -- (v3);
\draw (v1) -- (v6);
\draw (v2) -- (v3);
\draw (v2) -- (v4);
\draw (v2) -- (v5);
\draw (v4) -- (v5);
\draw (v5) -- (v6);
\end{tikzpicture}%
}
\vspace{-1.25em} 
\caption*{Hut}
\label{fig:Hut}
\end{subfigure}\hfill
\begin{subfigure}[t]{0.23\textwidth}
\centering
\begin{tikzpicture}[scale=0.65, baseline=(current bounding box.center)]
\tikzset{v/.style={circle,fill=black,minimum width=.1em, inner sep=.1em}}
\node[v] (v1) at (-1.75,-1.5) {};
\node[v] (v6) at (0,-1.5) {};
\node[v] (v3) at (1.75,-1.5) {};
\node[v] (v2) at (0,1.5) {};
\node[v] (v5) at (-.88,0) {};
\node[v] (v4) at (.88,0) {};
\draw (v5)--(v6) -- (v4);
\draw (v4) -- (v5) (v3)--(v2);
\draw (v2)--(v1) -- (v6)--(v3);
\end{tikzpicture}
\caption*{Tent}
\label{fig:Tent}
\end{subfigure}

\endgroup
\caption{Obstructions of order 5 and 6.}
\label{fig:forbid6}
\end{figure}



\bibliographystyle{abbrv}
\bibliography{TheReferences}

@article {LengthPolyhedron,
  AUTHOR = {Bir\'o, Csaba and K\'ezdy, Andr\'e{} E. and Lehel, Jen\H{o}},
  TITLE = {The length polyhedron of an interval order},
  JOURNAL = {Discrete Optim.},
  FJOURNAL = {Discrete Optimization},
  VOLUME = {60},
  YEAR = {2026},
  PAGES = {Paper No. 100938, 18},
  ISSN = {1572-5286,1873-636X},
  MRCLASS = {06A06 (05C20 05C62 52B05 90C27)},
  MRNUMBER = {5039507},
  DOI = {10.1016/j.disopt.2026.100938},
  URL = {https://doi-org.echo.louisville.edu/10.1016/j.disopt.2026.100938}}

@article{CorneilDusartHabibKoehler2016,
  author = {Derek G. Corneil and J{\'e}r{\'e}mie Dusart and Michel Habib and Ekkehard K{\"o}hler},
  doi = {10.1137/15M1012396},
  journal = {SIAM Journal on Discrete Mathematics},
  number = {1},
  pages = {569--591},
  title = {On the Power of Graph Searching for Cocomparability Graphs},
  volume = {30},
  year = {2016}}

@article{Dilworth,
  author = {Dilworth, R. P.},
  doi = {10.2307/1969503},
  fjournal = {Annals of Mathematics. Second Series},
  issn = {0003-486X},
  journal = {Ann. of Math. (2)},
  mrclass = {09.1X},
  mrnumber = {32578},
  mrreviewer = {P. M. Whitman},
  pages = {161--166},
  title = {A decomposition theorem for partially ordered sets},
  url = {https://doi-org.echo.louisville.edu/10.2307/1969503},
  volume = {51},
  year = {1950}}

@article{DusartHabib2017,
  author = {J{\'e}r{\'e}mie Dusart and Michel Habib},
  doi = {10.1016/j.dam.2015.07.016},
  journal = {Discrete Applied Mathematics},
  pages = {149--161},
  title = {A new {LBFS}-based algorithm for cocomparability graph recognition},
  volume = {216},
  year = {2017}}

@book{FishburnBook,
  author = {Fishburn, Peter C.},
  isbn = {0-471-81284-6},
  mrclass = {06-02 (05-02 05C99 06A10 90A06)},
  mrnumber = {776781},
  mrreviewer = {Ki Hang Kim},
  note = {A study of partially ordered sets, A Wiley-Interscience Publication},
  pages = {xi+215},
  publisher = {John Wiley \& Sons, Ltd., Chichester},
  series = {Wiley-Interscience Series in Discrete Mathematics},
  title = {Interval orders and interval graphs},
  year = {1985}}

@article{GilmoreHoffman1964,
    author  = {Paul C. Gilmore and Alan J. Hoffman},
    title   = {A characterization of comparability graphs and interval graphs},
    journal = {Canadian Journal of Mathematics},
    volume  = {16},
    year    = {1964},
    pages   = {539--548}}

@book{Greenough,
  author = {Greenough, Thomas Lockman},
  mrclass = {99-05},
  mrnumber = {2626092},
  note = {Thesis (Ph.D.)--Dartmouth College},
  pages = {56},
  publisher = {ProQuest LLC, Ann Arbor, MI},
  title = {Representation and Enumeration of Interval Orders and Semiorders},
  url = {http://gateway.proquest.com/openurl?url_ver=Z39.88-2004&rft_val_fmt=info:ofi/fmt:kev:mtx:dissertation&res_dat=xri:pqdiss&rft_dat=xri:pqdiss:7622642},
  year = {1976}}

@inproceedings{Koehler,
  address = {Cham},
  author = {Ekkehard K{\"o}hler and Lalla Mouatadid},
  booktitle = {Algorithm Theory -- {SWAT} 2014},
  doi = {10.1007/978-3-319-08404-6_28},
  editor = {R. Ravi and Inge Li G{\o}rtz},
  pages = {319--330},
  publisher = {Springer},
  series = {Lecture Notes in Computer Science},
  title = {Linear Time {LexDFS} on Cocomparability Graphs},
  volume = {8503},
  year = {2014}}

@article{KratschStewart1993DominationCocomparability,
  author = {Kratsch, Dieter and Stewart, Lorna},
  doi = {10.1137/0406032},
  journal = {SIAM Journal on Discrete Mathematics},
  number = {3},
  pages = {400--417},
  title = {Domination on Cocomparability Graphs},
  volume = {6},
  year = {1993}}

@incollection{TrotterSurvey,
        author="William T. Trotter",
        title="New perspectives on interval orders and interval graphs",
        booktitle="Surveys in Combinatorics, London Mathematical Society Lecture Note Series",
        editor="R. A. Bailey",
        volume=241,
        year=1997,
        pages="237-286"
        }

@book {SchrijverBook,
  AUTHOR = {Schrijver, Alexander},
  TITLE = {Theory of linear and integer programming},
  SERIES = {Wiley-Interscience Series in Discrete Mathematics},
  NOTE = {A Wiley-Interscience Publication},
  PUBLISHER = {John Wiley \& Sons, Ltd., Chichester},
  YEAR = {1986},
  PAGES = {xii+471},
  ISBN = {0-471-90854-1},
  MRCLASS = {90C05 (90C10)},
  MRNUMBER = {874114},
  MRREVIEWER = {J\"urgen K\"ohler}}

@book{TrotterBook,
  author = {Trotter, William T.},
  isbn = {0-8018-4425-8},
  mrclass = {06-02 (05-02 06A06 06A07)},
  mrnumber = {1169299},
  mrreviewer = {K. P. Bogart},
  note = {Dimension theory},
  pages = {xvi+307},
  publisher = {Johns Hopkins University Press, Baltimore, MD},
  series = {Johns Hopkins Series in the Mathematical Sciences},
  title = {Combinatorics and partially ordered sets},
  year = {1992}}

@book{WestBook,
     address = {Upper Saddle River, NJ},
     author = {West, Douglas B.},
     edition = {2},
     publisher = {Prentice Hall},
     title = {Introduction to Graph Theory},
     year = {2001}}

@incollection{Isaak2009,
 author = {Isaak, Garth},
 booktitle = {The mathematics of preference, choice and order},
 doi = {10.1007/978-3-540-79128-7\_17},
 isbn = {978-3-540-79127-0},
 mrclass = {06A06 (05C20)},
 mrnumber = {2648308},
 pages = {303--311},
 publisher = {Springer, Berlin},
 series = {Stud. Choice Welf.},
 title = {Interval order representations via shortest paths},
 url = {https://doi.org/10.1007/978-3-540-79128-7_17},
 year = {2009}}

@article{DoignonPauwels,
 author = {Doignon, Jean-Paul and Pauwels, Christophe},
 doi = {10.4000/msh.12061},
 fjournal = {Math\'{e}matiques et Sciences Humaines. Mathematics and Social Sciences},
 issn = {0987-6936},
 journal = {Math. Sci. Hum. Math. Soc. Sci.},
 mrclass = {06A06 (52B12 90C57 91B08)},
 mrnumber = {2917836},
 mrreviewer = {Esteban\ Indur\'{a}in},
 number = {195},
 pages = {55--71},
 title = {The representation cone of an interval order},
 url = {https://doi.org/10.4000/msh.12061},
 year = {2011}}

@article{BiKeLe,
 author = {Bir{\'o}, Csaba and K{\'e}zdy, Andr{\'e} E. and Lehel, Jen{\H{o}}},
 doi = {10.1007/s11083-025-09715-y},
 eid = {7},
 journal = {Order},
 note = {Article number 7; published online 23 December 2025},
 number = {1},
 title = {Two-count Interval Representation of a Permutation},
 url = {https://doi.org/10.1007/s11083-025-09715-y},
 volume = {43},
 year = {2026}}

@article{FMOS,
 author = {Francis, Mathew C. and Medeiros, L\'{\i}via S. and Oliveira, Fabiano S. and Szwarcfiter, Jayme L.},
 doi = {10.1016/j.dam.2021.12.029},
 fjournal = {Discrete Applied Mathematics. The Journal of Combinatorial Algorithms, Informatics and Computational Sciences},
 issn = {0166-218X,1872-6771},
 journal = {Discrete Appl. Math.},
 mrclass = {05C62 (05C35)},
 mrnumber = {4503027},
 pages = {236--251},
 title = {On subclasses of interval count two and on {F}ishburn's conjecture},
 url = {https://doi.org/10.1016/j.dam.2021.12.029},
 volume = {323},
 year = {2022}}

@article{Hajos1957,
  author  = {Haj{\'o}s, Gy{\"o}rgy},
  title   = {{\"U}ber eine Art von Graphen},
  journal = {Internationale Mathematische Nachrichten},
  volume  = {11},
  year    = {1957},
  pages   = {65}
}

@article{Benzer1959,
  author  = {Benzer, Seymour},
  title   = {On the Topology of the Genetic Fine Structure},
  journal = {Proceedings of the National Academy of Sciences of the United States of America},
  volume  = {45},
  number  = {11},
  pages   = {1607--1620},
  year    = {1959},
  doi     = {10.1073/pnas.45.11.1607}
}

@article{Luce1956,
  author  = {Luce, R. Duncan},
  title   = {Semiorders and a Theory of Utility Discrimination},
  journal = {Econometrica},
  volume  = {24},
  number  = {2},
  pages   = {178--191},
  year    = {1956},
  doi     = {10.2307/1905751}
}

@article{Wiener1914,
  author  = {Wiener, Norbert},
  title   = {A Contribution to the Theory of Relative Position},
  journal = {Proceedings of the Cambridge Philosophical Society},
  volume  = {17},
  pages   = {441--449},
  year    = {1914}
}

@article{ScottSuppes,
 author = {Scott, Dana and Suppes, Patrick},
 doi = {10.2307/2964389},
 fjournal = {The Journal of Symbolic Logic},
 issn = {0022-4812},
 journal = {J. Symbolic Logic},
 mrclass = {02.00 (04.00)},
 mrnumber = {115919},
 mrreviewer = {H. Kiesow},
 pages = {113--128},
 title = {Foundational aspects of theories of measurement},
 url = {https://doi-org.echo.louisville.edu/10.2307/2964389},
 volume = {23},
 year = {1958}}

@book{Golumbic2004,
  author    = {Golumbic, Martin Charles},
  title     = {Algorithmic Graph Theory and Perfect Graphs},
  edition   = {2},
  series    = {Annals of Discrete Mathematics},
  volume    = {57},
  publisher = {Elsevier},
  address   = {Amsterdam},
  year      = {2004},
  isbn      = {978-0-444-51530-8}
}

@article{DoignonFalmagne1985,
  author  = {Doignon, Jean-Paul and Falmagne, Jean-Claude},
  title   = {Spaces for the Assessment of Knowledge},
  journal = {International Journal of Man-Machine Studies},
  volume  = {23},
  number  = {2},
  pages   = {175--196},
  year    = {1985},
  doi     = {10.1016/S0020-7373(85)80031-6}
}

@book{DoignonFalmagneLearningSpaces,
  author    = {Falmagne, Jean-Claude and Doignon, Jean-Paul},
  title     = {Learning Spaces},
  series    = {Interdisciplinary Applied Mathematics},
  volume    = {31},
  publisher = {Springer},
  address   = {Berlin, Heidelberg},
  year      = {2011},
  doi       = {10.1007/978-3-642-01039-2}
}

@article{FishburnGraham1985,
  author  = {Fishburn, Peter C. and Graham, Ronald L.},
  title   = {Classes of Interval Graphs under Expanding Length Restrictions},
  journal = {Journal of Graph Theory},
  volume  = {9},
  number  = {4},
  pages   = {459--472},
  year    = {1985},
  doi     = {10.1002/jgt.3190090405}
}

@article{Gallai1967,
  author  = {Gallai, Tibor},
  title   = {Transitiv orientierbare Graphen},
  journal = {Acta Mathematica Academiae Scientiarum Hungaricae},
  volume  = {18},
  number  = {1--2},
  pages   = {25--66},
  year    = {1967},
  doi     = {10.1007/BF02020961}
}

@inproceedings{ArdevolEtAl2024,
  author    = {Ard{\'e}vol Mart{\'i}nez, Virginia and
               Rizzi, Romeo and
               Sikora, Florian and
               Vialette, St{\'e}phane},
  title     = {Generalizing {R}oberts' Characterization of Unit Interval Graphs},
  booktitle = {49th International Symposium on Mathematical Foundations
               of Computer Science (MFCS 2024)},
  series    = {Leibniz International Proceedings in Informatics (LIPIcs)},
  year      = {2024}
}

@article{BarYehudaEtAl2006,
  author  = {Bar-Yehuda, Reuven and Halld{\'o}rsson, Magn{\'u}s M.
             and Naor, Joseph (Seffi) and Shachnai, Hadas
             and Shapira, Irina},
  title   = {Scheduling Split Intervals},
  journal = {SIAM Journal on Computing},
  volume  = {36},
  number  = {1},
  pages   = {1--15},
  year    = {2006},
  doi     = {10.1137/S0097539703437843}
}

@article{TrotterHarary1979,
  author  = {Trotter, William T., Jr. and Harary, Frank},
  title   = {On Double and Multiple Interval Graphs},
  journal = {Journal of Graph Theory},
  volume  = {3},
  number  = {3},
  pages   = {205--211},
  year    = {1979},
  doi     = {10.1002/jgt.3190030302}
}
\end{document}